\newtheorem{thm}{Theorem}
\newtheorem{cor}[thm]{Corollary}
\newtheorem{lem}[thm]{Lemma}
\newtheorem{prop}[thm]{Proposition}
\theoremstyle{definition}
\theoremstyle{remark}
\newtheorem{rem}[thm]{Remark}
\newtheorem{step}{Step}
\newcommand{\norm}[1]{\left\Vert#1\right\Vert}
\newcommand{\abs}[1]{\left\vert#1\right\vert}
\newcommand{\set}[1]{\left\{#1\right\}}
\newcommand{\Comp}{\mathbb C\,}
\newcommand{\Real}{\mathbb R}
\newcommand{\Prob}{\mathbb P}
\newcommand{\Exp}{\mathbb E}
\newcommand{\pr}{\mathbb P}
\newcommand{\ev}{\mathbb E}
\newcommand{\Tr}{\mathrm{Tr}}
\newcommand{\Z}{\mathbb Z}
\newcommand{\eps}{\varepsilon}
\newcommand{\To}{\rightarrow}
\newcommand{\al}{\alpha}
\newcommand{\la}{\lambda}
\newcommand{\La}{\Lambda}
\newcommand{\cA}{\mathcal{A}}
\newcommand{\cB}{\mathcal{B}}
\newcommand{\cD}{\mathcal{D}}
\newcommand{\cE}{\mathcal{E}}
\newcommand{\cF}{\mathcal{F}}
\newcommand{\cN}{\mathcal{N}}
\newcommand{\cM}{\mathcal{M}}
\newcommand{\cW}{\mathcal{W}}
\newcommand{\cY}{\mathcal{Y}}
\newcommand{\cV}{\mathcal{V}}
\newcommand{\cZ}{\mathcal{Z}}
\newcommand{\zb}{\overline z}
\newcommand{\of}[1]{\left ( #1 \right ) }
\newcommand{\im}{\mathrm{Im}}
\newcommand{\re}{\mathrm{Re}}
\newcommand{\bin}[2]{\left (
\begin{array} {c}
#1 \\[-3pt]
#2
\end{array}
\right )}
\newcommand{\tph}{\widetilde\varphi}
\newcommand\vect[2]{
\left(\!\begin{array}{c}
#1\\
#2
\end{array}\!\right)}
\newcommand{\mat}[4]{\left( \begin{array}{cc}
#1 & #2  \\
#3 & #4  \\
\end{array} \right)}
\newcommand{\tla}{{\widetilde{\lambda}}}
\newcommand{\ula}{{\underline{\lambda}}}
\newcommand{\eqd}{\stackrel{d}{=}}
\definecolor{Red}{rgb}{1,0,0}
\definecolor{Blue}{rgb}{0,0,1}
\definecolor{Brown}{rgb}{1,0.47,0}
\newcommand{\eugene}[1]{#1}
\newcommand{\X}{{Q}}
\newcommand{\IO}{\mathcal{I}\,}
\newcommand{\RR}{\mathbb{R}}
\newcommand{\lip}{\stackrel{P}{\longrightarrow}}
\newcommand{\cd}{\Rightarrow}
\newcommand{\Fs}{\cA(D,\Real^d)} 
\newcommand{\sch}{{\sf Sch}_\tau}
\newcommand{\schs}{{\sf Sch}^*_\tau}
\begin{document}

\title{The scaling limit of the critical one-dimensional random Schr\"{o}dinger operator}%

\author{Eugene Kritchevski \and Benedek Valk\'o\and B\'alint Vir\'ag}%




\maketitle

\begin{abstract}
We consider two models of one-dimensional discrete random
Schr\"{o}dinger  operators
$$(H_n\psi)_\ell =\psi_{\ell -1}+\psi_{\ell +1}+v_\ell \psi_\ell ,$$ $\psi_0=\psi_{n+1}=0$
in the cases $ v_k=\sigma \omega_k/\sqrt{n}$ and $
v_k=\sigma \omega_k/ \sqrt{k}. $ Here $\omega_k$ are
independent random variables with mean $0$ and variance
$1$.

We show that the eigenvectors are delocalized and the
transfer matrix evolution has a scaling limit given by a
stochastic differential equation. In both cases,
eigenvalues near a fixed bulk energy $E$ have a point
process limit. We give bounds on the eigenvalue repulsion,
large gap probability, identify the limiting intensity and
provide a central limit theorem.

In the second model, the limiting processes are the same as
the point processes obtained as the bulk scaling limits of
the $\beta$-ensembles of random matrix theory. In the first
model, the eigenvalue repulsion is much stronger.
\end{abstract}

\section{Introduction}

We consider two models of one-dimensional discrete random
Schr\"{o}dinger  operators given by the matrix
\begin{equation}\label{shrod1dmatrix}
H_n=\left( \begin{array}{cccccc}
v_1 & 1 &  &  &  & \\
1 & v_{2} & 1 &  & & \\
  & 1  &\ddots &\ddots & &\\
& & \ddots & \ddots &1 & \\
& & & 1 & v_{n-1} & 1 \\
& & & & 1 & v_{n} \\
\end{array} \right)
\end{equation}
in the following two cases, referred to as the {\bf
critical model} and {\bf decaying model} respectively:
\begin{equation}
v_k=\sigma \omega_k/\sqrt{n}, \qquad v_k=\sigma \omega_k/
\sqrt{k}.\label{models}
\end{equation}
Here $\omega_k$ are independent random variables with mean
$0$, variance $1$ and bounded third absolute moment.

We show that the eigenvectors are delocalized and the
transfer matrix evolution has a scaling limit given by a
stochastic differential equation. We show that in both
cases eigenvalues near a fixed bulk energy $E$ have a point
process limit.

We analyze the limiting point processes, in particular we
give bounds on the eigenvalue repulsion, large gap
probability, identify the limiting intensity and provide a
central limit theorem.

In the decaying model, the limiting processes are the same
as the point processes obtained as the bulk scaling limits
of the $\beta$-ensembles of random matrix theory. In the
critical model, the eigenvalue repulsion is much stronger.

\subsection*{The critical model}

For very small values of $\sigma$, this matrix behaves like
the discrete Laplacian -- its eigenvalues are locally close
to periodic and its eigenvectors are extended. {The
discrete measure constructed by the square of the
coordinates of the normalized eigenvector will not be
concentrated on any small set of points.} For large
$\sigma$, the matrix is close to diagonal, with eigenvalues
dropped independently at random (Poisson statistics)  and
eigenvectors are localized. The goal of this paper is to
examine the nature of the transition from extended to
localized eigenvectors, and the corresponding eigenvalue
statistics.

The matrix $H_n $ is a perturbation of the adjacency matrix
of a 1-dimensional box.  When  the variance of $v_k$ does
not depend on $n$, eigenvectors are localized \citep{KCM,
KuSo, GMP} and the local statistics of eigenvalues are
Poisson \citep{Mi,Mo}. For the perturbed adjacency matrix
of higher-dimensional boxes, localization \citep{AM,FS} and
Poisson eigenvalue statistics \citep{Mi} hold if the
variance is a sufficiently large constant. In dimensions
three and higher, for a small constant variance, it is
widely conjectured that one gets random-matrix type
statistics of eigenvalues and extended eigenfunctions,
while  for two dimensions the opinions vary.

Our regime, where the variance of the random variables
$v_\ell$ are of order $n^{-1/2}$ captures the transition
between localization an delocalization. We will use the
methods developed in \cite{VV} to analyze the asymptotic
local spectral properties of $H_n$.

If there is no noise (i.e.~$\sigma=0$) then the eigenvalues
of the operator  are given by $2\cos(\pi k/(n+1))$ with
$k=1,\dots, n$. The asymptotic density near $E\in
(-2,2)$ is given by $\frac{\rho}{2\pi}$ with
\begin{equation}\label{defrho}
\rho=\rho(E)=1/\sqrt{1-E^2/4}
\end{equation}
which suggests that one should scale by $n$ near $E$ to get
a meaningful limit. Thus we will study the spectrum
$\Lambda_n$ of the scaled operator
\begin{equation}\label{scaledHn}
\rho n (H_n-E).
\end{equation}

We will use the well-known transfer matrix description of
the spectral problem for $H_n$. The one-dimensional
eigenvalue equation $H_n\psi= \mu \psi$ is written as
\begin{equation}\label{txmat_form}
\bin{\psi_{\ell +1}}{\psi_{\ell }}=T(\mu-v_\ell
)\bin{\psi_{\ell }}{\psi_{\ell -1}}=M_\ell
\bin{\psi_{1}}{\psi_{0}},
\end{equation}
where $$T(x):=\mat{x}{-1}{1}{0} \textrm{ and } M_\ell
:=T(\mu-v_\ell )T(\mu-v_{\ell -1})\cdots T(\mu-v_1).$$
Then $\mu$ is an eigenvalue of $H_n$ if and only if
\begin{equation}\label{ev_cond}
M_n\bin{1}{0}=c \bin{0}{1},
\end{equation}
for some $c\in\Real$, or, equivalently $(M_n)_{11}=0$. In
view of (\ref{scaledHn}) we parametrize
$\mu=E+\frac{\la}{\rho n}$. We will use the notation
$M_\ell ^\la$ to emphasize dependence on $\la$, and use the
similar notation for other quantities.  Setting
\begin{equation}\label{epsn}
\eps_\ell =\frac{\la}{\rho
n}-\frac{\sigma\omega_\ell}{\sqrt{n}},
\end{equation}
we have
\begin{equation}\label{Mn}
M_\ell ^\la=T(E+\eps_\ell )T(E+\eps_{\ell -1})\cdots
T(E+\eps_1) \textrm{ for } 0\leq \ell \leq n.
\end{equation}
The scaling of $v_\ell=\sigma\omega_\ell/\sqrt{n}$ ensures
that, with high probability, the transfer matrices
$M_\ell^\la$ are bounded and the eigenfunctions are
delocalized. \eugene{
\begin{thm}\label{delocalization} Given $E\in (-2,2)$, $R<\infty$ and $\sigma<\infty$, there exists
a constant $c$ so that for all sufficiently large $n$
and all $t>0$, the following two statements hold with
probability at least $1-c/t$.
\newline\noindent {\rm (1)} We have
\begin{equation}\label{boundSn}
\max_{0\leq \ell\leq n,\abs{\la}\leq R}\Tr {M_\ell
^\la}{M_\ell ^\la}^* < t.
\end{equation}
\newline\noindent {\rm (2)} For each eigenvector $\psi$ of $H_n$, normalized by $\sum_{\ell=1}^{n}|\psi_\ell|^2=1$ and corresponding to an eigenvalue $\mu\in [E-\frac{R}{\rho n},E+\frac{R}{\rho n}]$, we have
\begin{equation}\label{boundpsi}
 \frac{2}{(n+1)t^2}<\abs{\psi_\ell}^2+\abs{\psi_{\ell+1}}^2 < \frac{2t^2}{n+1},\qquad 0\leq \ell\leq n.
 \end{equation}
\end{thm}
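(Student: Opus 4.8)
Statement (2) follows from (1) by a deterministic argument. If $\psi$ is an eigenvector with eigenvalue $\mu=E+\lambda/(\rho n)$, $|\lambda|\le R$, then the recursion \eqref{txmat_form} together with $\psi_0=0$ gives $\bin{\psi_{\ell+1}}{\psi_\ell}=M_\ell^\lambda\bin{\psi_1}{0}$, so $|\psi_{\ell+1}|^2+|\psi_\ell|^2=|\psi_1|^2\,\norm{M_\ell^\lambda e_1}^2$ with $e_1=\bin{1}{0}$. Since $\det M_\ell^\lambda=1$, its singular values are $s\ge 1\ge s^{-1}$ with $s^2\le \Tr M_\ell^\lambda (M_\ell^\lambda)^*=s^2+s^{-2}$; hence on the event of (1) we have $s^{-2}\le\norm{M_\ell^\lambda e_1}^2\le s^2$ and therefore $1/t<\norm{M_\ell^\lambda e_1}^2<t$ for all $0\le\ell\le n$. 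Summing $|\psi_\ell|^2+|\psi_{\ell+1}|^2$ over $0\le\ell\le n$ and using $\psi_0=\psi_{n+1}=0$ and $\sum_{\ell=1}^n|\psi_\ell|^2=1$, the sum telescopes to $2$; it is a sum of $n+1$ terms each lying in $(|\psi_1|^2/t,\,|\psi_1|^2 t)$, so $\tfrac{2}{(n+1)t}<|\psi_1|^2<\tfrac{2t}{n+1}$, and \eqref{boundpsi} follows.

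To prove (1), fix $\lambda$ first. Since $E\in(-2,2)$ the matrix $T(E)=\mat{E}{-1}{1}{0}$ has eigenvalues $e^{\pm i\theta}$ with $E=2\cos\theta$, so there is $W\in GL_2(\Real)$ with $W^{-1}T(E)W=R_\theta$, the rotation by $\theta$. Because $T(E+\eps)=T(E)+\eps P$ with $P=\mat{1}{0}{0}{0}$, the conjugated matrices $\widetilde M_\ell^\lambda:=W^{-1}M_\ell^\lambda W$ are products of factors $R_\theta+\eps_j Q$, $Q:=W^{-1}PW$, and $\widetilde r_\ell:=\Tr\widetilde M_\ell^\lambda(\widetilde M_\ell^\lambda)^*$ satisfies an exact identity $\widetilde r_{\ell+1}=\widetilde r_\ell+\eps_{\ell+1}X_\ell+\eps_{\ell+1}^2Y_\ell$, where the $\eps$-free term is exactly $\widetilde r_\ell$ because $R_\theta$ is orthogonal, and $|X_\ell|,|Y_\ell|\le 2\norm{Q}^2\,\widetilde r_\ell$ because $\widetilde M_\ell^\lambda(\widetilde M_\ell^\lambda)^*\succeq 0$. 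Using $\Exp[\eps_{\ell+1}\mid\mathcal F_\ell]=\lambda/(\rho n)$ and $\Exp[\eps_{\ell+1}^2\mid\mathcal F_\ell]=\lambda^2/(\rho n)^2+\sigma^2/n$, both $O(1/n)$, we get $\Exp[\widetilde r_{\ell+1}\mid\mathcal F_\ell]\le(1+C/n)\widetilde r_\ell$, so $(1+C/n)^{-\ell}\widetilde r_\ell$ is a non-negative supermartingale with initial value $\widetilde r_0=2$. Doob's maximal inequality gives $\Prob(\max_{\ell\le n}\widetilde r_\ell\ge u)\le 2e^C/u$, and since $r_\ell^\lambda:=\Tr M_\ell^\lambda(M_\ell^\lambda)^*\le\norm{W}^2\norm{W^{-1}}^2\widetilde r_\ell$, this is the desired bound at fixed $\lambda$.

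To make this uniform in $|\lambda|\le R$, track the pair $(M_\ell^\lambda,N_\ell^\lambda)$ with $N_\ell^\lambda:=\partial_\lambda M_\ell^\lambda$; since $\partial_\lambda T(E+\eps_j)=\tfrac{1}{\rho n}P$ one has $N_{\ell+1}^\lambda=T(E+\eps_{\ell+1})N_\ell^\lambda+\tfrac{1}{\rho n}P M_\ell^\lambda$. Repeating the computation in the conjugated coordinates for $g_\ell^\lambda:=\Tr M_\ell^\lambda(M_\ell^\lambda)^*+\Tr N_\ell^\lambda(N_\ell^\lambda)^*$ and bounding the new cross terms by $\tfrac{C}{n}\big(\Tr M_\ell^\lambda(M_\ell^\lambda)^*+\Tr N_\ell^\lambda(N_\ell^\lambda)^*\big)$ via Cauchy--Schwarz, one obtains $\Exp[g_{\ell+1}^\lambda\mid\mathcal F_\ell]\le(1+C/n)g_\ell^\lambda$. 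Integrating in $\lambda$ over $[-R,R]$ and multiplying by $(1+C/n)^{-\ell}$ yields a non-negative supermartingale $Q_\ell=\int_{-R}^R g_\ell^\lambda\,d\lambda$ with $Q_0=4R$, so $\Prob(\max_{\ell\le n}Q_\ell\ge s)\le 4Re^C/s$ by Doob. Finally, the one-dimensional Sobolev embedding $H^1\hookrightarrow L^\infty$ applied to the matrix-valued map $\lambda\mapsto M_\ell^\lambda$ gives $\max_{|\lambda|\le R}r_\ell^\lambda\le C_R\int_{-R}^R\big(\norm{M_\ell^\lambda}_{\mathrm{HS}}^2+\norm{\partial_\lambda M_\ell^\lambda}_{\mathrm{HS}}^2\big)\,d\lambda=C_R Q_\ell$, which upgrades the bound to the uniform one, proving (1).

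The main obstacle is the drift estimate $\Exp[\widetilde r_{\ell+1}\mid\mathcal F_\ell]\le(1+C/n)\widetilde r_\ell$ (and its analogue for $g_\ell^\lambda$): it requires choosing the conjugation $W$ so that $T(E)$ becomes genuinely orthogonal, and then controlling the $O(1/n)$ drift using only the first two moments of $\eps_{\ell+1}$, uniformly in $\ell$, $\lambda$ and $n$, with constants explicit in $\theta$ (which degenerate as $E\to\pm2$). The $H^1$-to-$L^\infty$ step, though technical, is routine once the augmented supermartingale is in place.
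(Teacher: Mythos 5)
Your proof of part (2) is correct and is essentially the same argument as the paper's (you compare every $\|\Psi_\ell\|^2$ to $|\psi_1|^2$ via $\|M_\ell e_1\|^2$, while the paper compares $\|\Psi_k\|^2$ to $\|\Psi_\ell\|^2$ via $\|M_k M_\ell^{-1}\|$; both use $\det M_\ell=1$ and the normalization in the same way). Your fixed-$\lambda$ argument for part (1) also works, and it is a genuine variant of the paper's: you conjugate by a fixed real $W$ to make $T(E)$ a rotation and treat the $O(1/n)$ drift from $\lambda$ by a $(1+C/n)^{-\ell}$-normalized supermartingale, whereas the paper conjugates by the $\lambda$-dependent $\Xi_\lambda=T(E+\lambda/(\rho n))$ so that $S_\ell=\Xi_\lambda^{-\ell}M_\ell$ is an \emph{exact} martingale and no supermartingale correction is needed. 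Both choices are legitimate; the paper's choice is exploited later because Lemma~\ref{analyticmartingales} is stated for genuine matrix-valued martingales.

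There is, however, a gap in your passage from fixed $\lambda$ to $\max_{|\lambda|\le R}$. Your integral $Q_\ell=\int_{-R}^{R}g_\ell^\lambda\,d\lambda$ and the one-dimensional Sobolev embedding $H^1\hookrightarrow L^\infty$ only control $\sup_{\lambda\in[-R,R]}r_\ell^\lambda$, i.e. the supremum over a real interval. But \eqref{boundSn} asserts the bound over the complex disk $\{\lambda\in\Comp:|\lambda|\le R\}$, and this is exactly what is used downstream: the proof of Corollary~\ref{ConvergenceOfPoint} feeds \eqref{boundSn} into Proposition~\ref{convergence_analytic_functions} as the tightness hypothesis \eqref{general_tightness} for the random analytic functions $\lambda\mapsto M_n^\lambda$ on complex disks. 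To mimic your approach on a disk one would integrate over a two-dimensional region and invoke $W^{1,2}(\Real^2)\hookrightarrow L^\infty$, which is the critical Sobolev case and fails. The paper avoids this precisely by using analyticity: Lemma~\ref{analyticmartingales} bounds $\max_{|\lambda|\le r_1}\Tr X_k X_k^*$ by a Poisson-kernel average of $\Tr X_k X_k^*$ over the circle $|\lambda|=r_2$ (subharmonicity of $|X_k(\cdot)(i,j)|^2$), and then applies Doob to the submartingale $\theta\mapsto\int\Tr X_k X_k^*\,d\theta$. So you should replace the $H^1$-to-$L^\infty$ step by a subharmonicity or Bergman-space estimate; once you do, you also no longer need the derivative $N_\ell^\lambda$ or the augmented supermartingale $g_\ell^\lambda$, and your argument simplifies to the paper's.
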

}

In order to understand the interaction of the eigenvalues
near $E$ and of the corresponding eigenvectors, one would
ideally like to derive a limiting diffusion process for
\eqref{Mn}. The starting observation is that $M_\ell ^\la$
cannot have a continuous limit.  The obstacle is that for
large $n$ each transfer matrix $T(E+\eps_k)$ in \eqref{Mn}
is not close to $I$ but to $T(E)$. Thus we are led to
consider, instead of $M_\ell ^\la$, the matrices
\begin{equation}\label{Xn}
\X_\ell ^\la=T^{-\ell }(E)M_\ell ^\la,\qquad 0\leq \ell\leq
n,
\end{equation}which will evolve regularly.
\begin{figure}[htp]
\begin{center}
\includegraphics[width=300pt]{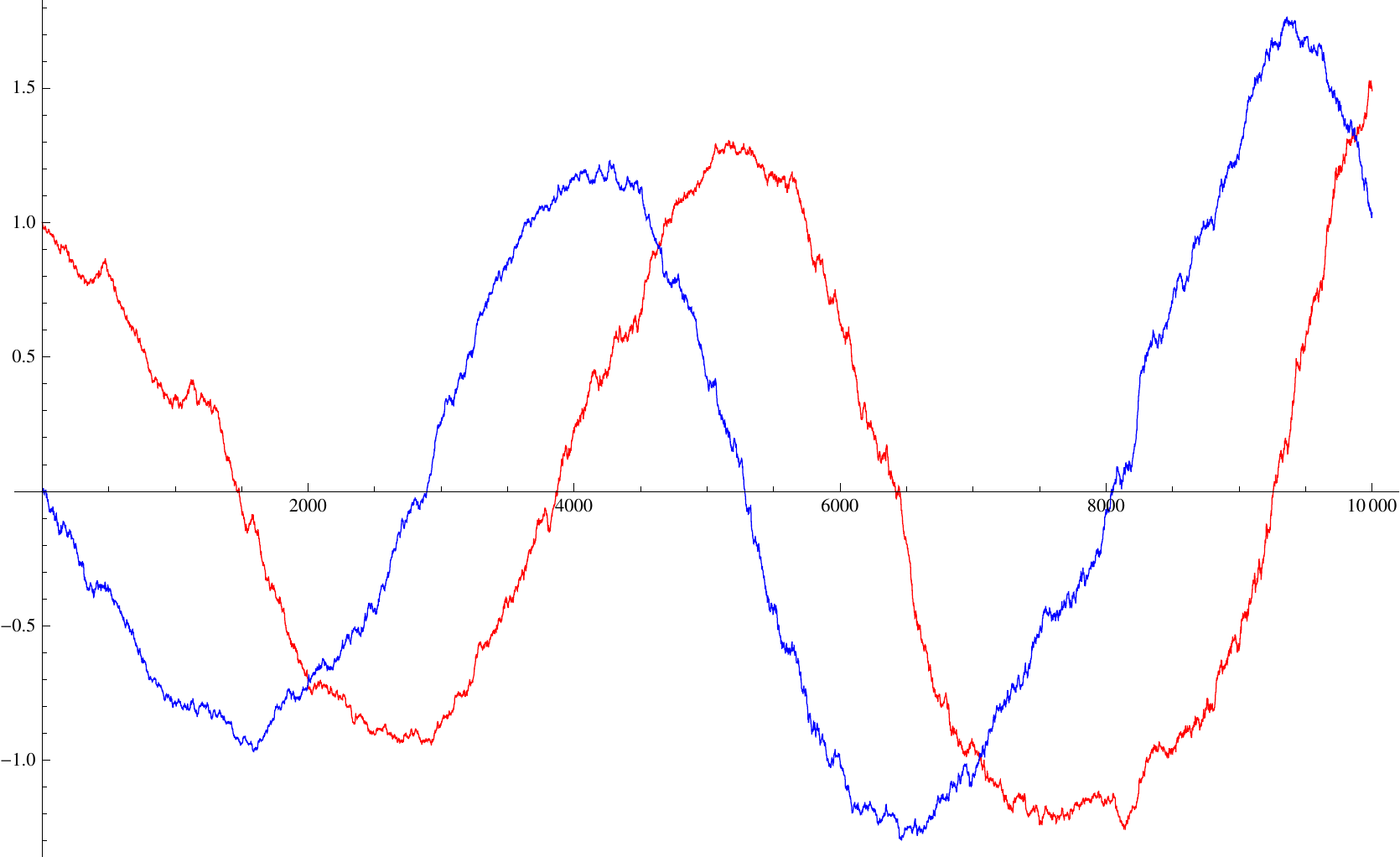}
\end{center}
\caption{Simulation of the entries of the first row of $\X_\ell ^\la$
with $E=1, \la=25$ and $n=10000$. }
\end{figure}



To control the correction factor $T^{-\ell }(E)$, we
diagonalize $T(E)=ZDZ^{-1}$ with
\begin{equation}
D=\mat{\zb}{0}{0}{z}, \quad Z=\mat{\zb}{z}{1}{1},\quad
z=E/2 + i\sqrt{1- (E/2)^2}.\label{zdef}
\end{equation}
\begin{thm}\label{DiffusionTransfer} Assume $0<|E|<2$.
Let $\cB(t), \cB_2(t),\cB_3(t)$ be independent standard
Brownian motions in $\Real$,
$\cW(t)=\frac1{\sqrt{2}}(\cB_2(t)+i \cB_3(t))$. Then the
stochastic differential equation
\begin{equation}\label{LimitingTransfer}
d\X^{\la}=\frac12  Z \of{ \mat{i \la}{0}{0}{- i \la} dt+
\mat{i d\cB}{ d\cW}{ d\overline\cW}{-i
d\cB}}Z^{-1}\X^{\la}, \qquad \X^{\la}(0)=I
\end{equation}
has a unique strong solution $\X^{\la}(t): \la\in\Comp,
t\ge 0$, which is analytic in $\lambda$. Moreover with
$\tau=(\sigma \rho)^2$
\begin{equation*}
(\X_{\left \lfloor nt/\tau \right \rfloor }^{\la}, 0\leq
t\leq \tau)\Rightarrow (\X^{\la/ \tau}(t),0\leq t\leq \tau),
\end{equation*}
in the sense of finite dimensional distributions for $\la$
and uniformly in $t$. Also, for any given $0\le t\le \tau$
the random analytic functions $\X_{\left \lfloor nt/\tau
\right \rfloor }^{\la}$ converge in distribution to
$\X^{\la/\tau}(t)$ with respect to the local uniform topology.
\end{thm}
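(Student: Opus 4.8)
The plan is to derive a stochastic differential equation for the conjugated transfer matrices $\X_\ell^\la$ and then invoke a martingale-problem / diffusion-approximation argument. First I would compute the one-step recursion for $\X_\ell^\la$. From \eqref{Xn} and \eqref{Mn} we have $\X_{\ell}^\la = T^{-\ell}(E)T(E+\eps_\ell)T^{\ell-1}(E)\,\X_{\ell-1}^\la$, so the increment is governed by the matrix $A_\ell := T^{-\ell}(E)\big(T(E+\eps_\ell)-T(E)\big)T^{\ell-1}(E)$. Since $T(E+x)-T(E)=x\,\mathrm{diag}(1,0)$ (written as $\mat{x}{0}{0}{0}$) and $\eps_\ell = \la/(\rho n) - \sigma\omega_\ell/\sqrt{n}$, one gets $\X_\ell^\la - \X_{\ell-1}^\la = \big(\tfrac{\la}{\rho n}-\tfrac{\sigma\omega_\ell}{\sqrt n}\big) T^{-\ell}(E)\mat{1}{0}{0}{0}T^{\ell-1}(E)\,\X_{\ell-1}^\la$. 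Using the diagonalization $T(E)=ZDZ^{-1}$ from \eqref{zdef}, with $D=\mathrm{diag}(\bar z,z)$ and $|z|=1$, the matrix $T^{-\ell}(E)\mat{1}{0}{0}{0}T^{\ell-1}(E)$ equals $Z D^{-\ell} Z^{-1}\mat{1}{0}{0}{0} Z D^{\ell-1} Z^{-1}$; since $|z|=1$, $D^{\ell}$ is unitary and the entries of this matrix are trigonometric polynomials in $\ell$ with frequencies $0, \pm 2\theta$ where $z=e^{i\theta}$ — i.e. a constant (diagonal-in-$Z$-basis) part plus rapidly oscillating off-diagonal terms.

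Next I would pass to the time scale $t = \ell\tau/n$, with $\tau=(\sigma\rho)^2$, and separate the two sources of randomness. The deterministic drift from the $\la/(\rho n)$ term accumulates: $\sum_{\ell\le nt/\tau}\tfrac{\la}{\rho n}\cdot(\text{const part})$ converges to the $\tfrac12 Z\,\mathrm{diag}(i\la/\tau,-i\la/\tau)\,Z^{-1}\,dt$ drift (the factor $1/\tau$ from the chosen time scaling, the factor $\tfrac12$ and the $i$'s from explicitly computing the constant part of $Z\mat{1}{0}{0}{0}Z^{-1}$, whose diagonal entries in the $Z$-basis are $\bar z/(z-\bar z)$ and its conjugate — multiplied by $1/\rho$ this gives $\pm i/2$). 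The oscillating part of the $\la$-term contributes nothing in the limit because the oscillation averages it out over $\sim\sqrt n$-length blocks. The noise term $-\sigma\omega_\ell/\sqrt n$ times the same matrix contributes a martingale; its predictable quadratic variation over $[0,t]$ is $\sum \tfrac{\sigma^2}{n}\Exp[\omega_\ell^2]\cdot(\text{rank-one matrix})^{\otimes 2}$, and here the key point is that the oscillating off-diagonal entries, when squared, produce both a constant term and a frequency-$\pm4\theta$ term; the constant term survives and, after the $\sum 1/n \to (\tau)^{-1}\int dt$ conversion and accounting for $\sigma^2$, produces exactly the covariance structure of the matrix Brownian term $\tfrac12 Z\mat{i\,d\cB}{d\cW}{d\overline\cW}{-i\,d\cB}Z^{-1}$ — note this matrix is anti-Hermitian-conjugate-structured so the increments preserve the relevant group structure, and the independence and variances of $\cB,\cB_2,\cB_3$ are read off from the distinct Fourier modes being uncorrelated. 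I would make this rigorous via the standard diffusion-approximation theorem (e.g. a martingale problem / generator-convergence argument as in Ethier–Kurtz, or the approach of \cite{VV}): check (i) the conditional first moments of increments converge to the drift, (ii) the conditional second moments converge to the diffusion coefficient, (iii) a Lindeberg-type bound on third moments (using the bounded third absolute moment of $\omega_k$) gives negligibility of large jumps, and (iv) tightness — for which Theorem~\ref{delocalization}, giving a priori boundedness of $\Tr M_\ell^\la(M_\ell^\la)^*$ with high probability, is exactly the needed input to localize the process in a compact set.

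For the analytic-in-$\la$ statement, I would observe that $\X_\ell^\la$ is, for each $\ell$, a matrix-valued polynomial in $\la$ (it is built from the affine-in-$\la$ matrices $T(E+\eps_\ell)$), hence entire; the limiting SDE \eqref{LimitingTransfer} likewise has coefficients affine in $\la$, so a Picard iteration shows $\X^\la(t)$ is entire in $\la$ with locally uniformly (in $\la$, a.s.) convergent series, giving existence/uniqueness of the strong solution and analyticity simultaneously. The convergence in the local uniform topology on analytic functions then follows from the finite-dimensional convergence plus tightness in that topology, which in turn follows from a uniform-in-$\la$-on-compacts moment bound on $\X_\ell^\la$ (again supplied by the a priori estimates) together with Cauchy's integral formula to upgrade pointwise-in-$\la$ tightness to tightness of the analytic functions.

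I expect the main obstacle to be the careful bookkeeping of the oscillatory terms: showing that the frequency-$\pm 2\theta$ and $\pm 4\theta$ contributions vanish in the limit (a Riemann–Lebesgue / summation-by-parts estimate, delicate because one needs it uniformly in $\la$ on compacts and jointly with the martingale CLT), and extracting precisely the right constants and independence structure of $\cB, \cW$ from the surviving resonant ($0$-frequency) parts so that the answer matches \eqref{LimitingTransfer} exactly rather than up to an unidentified linear change of Brownian motions. The resonance at $E=\pm\sqrt2$ type points (where $4\theta \equiv 0$) would need separate care, but the hypothesis $0<|E|<2$ together with genericity can be arranged, or handled by noting such a finite set contributes measure zero; in any case the extra resonant term there would have to be checked not to alter the stated limit (or the statement implicitly excludes it via "$0<|E|<2$" being understood generically).
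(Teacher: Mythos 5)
Your proposal matches the paper's strategy essentially step for step: rewrite the one-step increment of $\X_\ell^\la$ in the $Z$-diagonalizing basis, split into a $\la$-drift and an $\omega$-noise part, observe that the first and second moments involve the Fourier modes $1$, $z^{2\ell}$, $z^{4\ell}$ of which only the resonant mode survives in the Ces\`aro/integrated sense, and invoke a diffusion-approximation theorem --- the paper packages this as Proposition~\ref{convergence}, proved via Ethier--Kurtz Theorem~7.4.1 with stopping-time localization and a third-moment Lindeberg bound --- together with Proposition~\ref{convergence_analytic_functions} (Montel/Prokhorov plus the martingale bound of Lemma~\ref{analyticmartingales}) to upgrade finite-dimensional convergence in $\la$ to convergence of random analytic functions.

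One factual correction: there is no resonance at $E=\pm\sqrt{2}$, so no ``genericity'' caveat is needed. Writing $z=e^{i\theta}$ with $\cos\theta=E/2$, the relevant resonance condition $z^4=1$ (equivalently $4\theta\equiv 0 \pmod{2\pi}$) forces $E\in\{0,\pm2\}$; at $E=\pm\sqrt{2}$ one has $z^4=-1\neq 1$. Thus $0<|E|<2$ is exactly, not generically, the non-resonance hypothesis, and the single resonant interior point $E=0$ is handled separately in Theorem~\ref{theoremE=0}, where the surviving $\Exp\zeta^2=-1$ changes the noise matrix to the form in~\eqref{LimitingTransfer0}. (A smaller stylistic difference: for analyticity in $\la$ of the limiting solution the paper applies Protter's theorem on smooth dependence on initial conditions, treating $\re\la,\im\la$ as frozen coordinates, rather than a Picard iteration; either argument works since the coefficients are affine in $\la$ and globally Lipschitz.)
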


Theorem \ref{DiffusionTransfer} is a one-dimensional
version of a more general quasi-one-dimensional theorem
that appears in \cite{VV3}. This  proof, which predates the
one in that paper, is included here for completeness. The
preprint \cite{VV3} was followed by the preprint of
\cite{BD}, who, in independent work, also study SDE limits
of transfer matrices. Their starting point the so-called
DMPK theory in the physics literature, which is essentially
the study of diffusive limits of quasi-one-dimensional
random Schr\"odinger operators from a slightly different
point of view. We refer the reader to \cite{BD} for a
discussion of this theory. One of the novelties of our
approach is that it allows for studying the dependence on
the eigenvalue $\lambda$, which in turn allows us to deduce
the scaling limit of the spectrum, the main focus here.

\bigskip

The introduction of $T^{-n}(E)$ in Theorem
\ref{DiffusionTransfer} has the effect of changing the
boundary condition for each $n$, so for the next result, we
have to pass to subsequences.

\begin{cor}\label{ConvergenceOfPoint}
Suppose that  $n_j$ is a subsequence so that $z^{n_j}$
converges.
 Then $T^{n_j}(E)\to \tilde T$ and the random matrix-valued analytic functions
$M_{n_j}^\lambda$ converge in distribution to $\tilde T
Q^{\lambda/\tau}(\tau)$. Moreover, $\La_{n_j}$ converges in
law to the counting measure of the zeros of the random
analytic function $\lambda\mapsto [\tilde T
Q^{\lambda/\tau}(\tau)]_{11}$.
\end{cor}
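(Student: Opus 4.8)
The plan is to read off $\tilde T$ from the diagonalization \eqref{zdef}, transport Theorem~\ref{DiffusionTransfer} through the identity $M_\ell^\la=T^\ell(E)\X_\ell^\la$ of \eqref{Xn}, and then turn convergence of random analytic functions into convergence of their zero sets by a Hurwitz/Rouch\'e argument; the only real work will be to check that the limiting analytic function is a.s.\ not identically zero.

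First, since $T(E)=ZDZ^{-1}$ with $D=\mat{\zb}{0}{0}{z}$ and $|z|=1$, we have $T^{n}(E)=ZD^{n}Z^{-1}$ with $D^{n}=\mat{\zb^{\,n}}{0}{0}{z^{n}}$ and $\zb^{\,n}=\overline{z^{\,n}}$; hence along a subsequence with $z^{n_j}\to w$ we obtain the deterministic convergence $T^{n_j}(E)\to Z\mat{\bar w}{0}{0}{w}Z^{-1}=:\tilde T$, with $\det\tilde T=|w|^{2}=1$, so $\tilde T$ is invertible. Taking $t=\tau$ in Theorem~\ref{DiffusionTransfer} (so that $\lfloor nt/\tau\rfloor=n$) shows that the random analytic functions $\la\mapsto\X_n^\la$ converge in distribution, in the local uniform topology, to the analytic function $\la\mapsto\X^{\la/\tau}(\tau)$. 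Since $T^{n_j}(E)\to\tilde T$ is deterministic, the pair $\big(T^{n_j}(E),\X_{n_j}^\la\big)$ converges jointly in distribution to $\big(\tilde T,\X^{\la/\tau}(\tau)\big)$, and, matrix multiplication being continuous for the local uniform topology, the continuous mapping theorem gives
\begin{equation*}
M_{n_j}^\la=T^{n_j}(E)\,\X_{n_j}^\la\ \cd\ \tilde T\,\X^{\la/\tau}(\tau)
\end{equation*}
in distribution with respect to the local uniform topology; projecting onto the $(1,1)$ entry yields $(M_{n_j}^\la)_{11}\cd[\tilde T\,\X^{\la/\tau}(\tau)]_{11}$ as random entire functions of $\la$.

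Next I would pass to the zero sets. By \eqref{ev_cond}, $\mu$ is an eigenvalue of $H_n$ iff $(M_n^\mu)_{11}=0$, so under $\mu=E+\la/(\rho n)$ the scaled spectrum $\La_n$ is exactly the zero counting measure of the entire function $\la\mapsto(M_n^\la)_{11}$, whose zeros are all real (as $H_n$ is real symmetric) and simple. The map sending an entire function $f\not\equiv0$ to $\sum_{f(\la)=0}\delta_\la$ is continuous from the local uniform topology to the vague topology on Radon measures (Hurwitz's theorem), with discontinuity set contained in $\{f\equiv0\}$. Hence, provided that almost surely $\la\mapsto[\tilde T\,\X^{\la/\tau}(\tau)]_{11}\not\equiv0$, the continuous mapping theorem applied to the convergence of the preceding paragraph shows that $\La_{n_j}$ converges in law to the zero counting measure of $\la\mapsto[\tilde T\,\X^{\la/\tau}(\tau)]_{11}$; moreover, since each $(M_{n_j}^\la)_{11}$ has only real zeros, Hurwitz's theorem forces the limiting zeros to be real a.s., so the limit is genuinely a point process on $\Real$.

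The main obstacle is thus the non-degeneracy $[\tilde T\,\X^{\la/\tau}(\tau)]_{11}\not\equiv0$. Invertibility of $\tilde T$ and of $\X^{\la/\tau}(\tau)$ (which solves the linear matrix SDE \eqref{LimitingTransfer} from $\X^\la(0)=I$, hence lies in $SL_2(\Comp)$ for all $t,\la$) only rules out simultaneous vanishing of the two first-row entries, not identical vanishing of the $(1,1)$ entry alone. I would establish the latter either (i) by a tightness argument: $\La_n$ is a.s.\ locally finite with a finite, nontrivial limiting intensity (from Theorem~\ref{delocalization} and the intensity computation carried out elsewhere in the paper), whereas identical vanishing of the limiting function would force the number of zeros of $(M_{n_j}^\la)_{11}$ in a fixed disc to diverge; or (ii) by analysing $\X^{\la/\tau}(\tau)$ for large $|\la|$ along the imaginary axis, where the drift $\frac12 Z\mat{i\la}{0}{0}{-i\la}Z^{-1}$ dominates and forces a first-row entry to grow exponentially, which is incompatible with identical vanishing. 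Everything else is routine assembly of Theorem~\ref{DiffusionTransfer}, the transfer-matrix eigenvalue criterion \eqref{ev_cond}, and the standard continuity of zeros of analytic functions.
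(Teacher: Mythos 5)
Your argument follows essentially the same route as the paper: diagonalize $T(E)$ to identify $\tilde T$, use Theorem~\ref{DiffusionTransfer} at $t=\tau$ to get local-uniform convergence of $\la\mapsto Q^\la_n$, multiply by the deterministic convergent factor $T^{n_j}(E)$, and pass to zero counting measures via Hurwitz/continuous-mapping, modulo the non-degeneracy that the limiting analytic function is a.s.\ not identically zero. The one organizational difference is that you read the local-uniform-topology convergence straight from the statement of Theorem~\ref{DiffusionTransfer}, whereas the paper's proof of this corollary is actually where that mode of convergence is established — via the tightness bound \eqref{boundSn} of Theorem~\ref{delocalization} and Proposition~\ref{convergence_analytic_functions} (Montel plus Prokhorov). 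The paper also phrases the eigenvalue condition as the vanishing of $g_n(\la)=\det\bigl(Q_n^\la\binom{1}{0},\,T^{-n}\binom{0}{1}\bigr)$ rather than $(M_n^\la)_{11}$ directly, but these agree up to a sign.

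Where you do better than the paper is in flagging the non-degeneracy $\Prob\bigl([\tilde T\,\X^{\la/\tau}(\tau)]_{11}\equiv 0\bigr)=0$ as a genuine obstacle: the paper dispatches it with ``it is easy to see.'' However, your proposed route (i) is circular as written — the intensity computation of Theorem~\ref{intensity} is proved via Corollary~\ref{limitPointProcess}, which itself rests on the present corollary, so you cannot invoke it here. Route (ii) (exponential growth of the drift along $i\Real$) is the right kind of argument and would close the gap; an even quicker one is to note that for real $\la$ the phase $\varphi^\la(\tau)$ is a.s.\ strictly increasing and continuous in $\la$, so the direction of the first column of $Q^{\la/\tau}(\tau)$ actually rotates and cannot remain parallel to a fixed vector for all $\la$.
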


Note that the sequence $M_{n_j}$ has a limit if $z^{n_j}$
converges. For $\Lambda_{n_j}$ to have a limit, only the
convergence of $z^{2n_j}$ is needed. If it does, then the
possible limits of $T^{n_j}$ are $\pm \tilde T$ and
$[\tilde T Q(\tau)]_{11}$ and $-[\tilde T Q(\tau)]_{11}$
have the same zero set.

Since $H_n$ is symmetric, the limiting point process will
live on $\Real$. The  point process can be more effectively
described by a scalar SDE. We first note that for any
$a,b\in \Real^2$ we have
\[
Z^{-1} \vect{a}{b}=\frac{\rho}{2}\vect{
 a i-b z i}{\overline{a i-b z i}
},
\]
so $Z^{-1}$ maps real vectors to vectors with conjugate
entries. Since for $\lambda\in \Real$ the transfer matrix
$Q^\la_\ell$, and the limiting process $Q^\la(t)$ will also
be real, we can write
\[
%
Z^{-1}Q^\la(t)\vect{1}{0}=\vect{i\psi^\la(t)}{\overline{i
\psi^\la(t)}}, \qquad
\]
for some complex numbers $\psi^\la(t)$ where
$\psi^\la(0)=\rho/2$ (the extra $i$ in the above definition
makes this and some upcoming formulas nicer). We will
define the \emph{phase function} $\varphi^\la(t)$ by
\begin{equation}\label{defphi}
e^{i\varphi^\la(t)}= {\psi^\la(t)/\overline{\psi^\la(t)}},
\qquad \varphi^\la(0)=0.
\end{equation}
This uniquely determines $\varphi^\la(t)$ assuming that it
is continuous in $t$ (as long as $\det Q^\la(t)\neq 0$,
which follows from  (\ref{LimitingTransfer})). It\^o's
formula then gives an SDE for the evolution of
$\varphi^\la(t)$ and we can identify the zeros of $[\tilde
TQ(\tau)]_{11}$. This leads to another description of the
point process limit of $\Lambda_{n_j}$.

\begin{figure}[htp]
\begin{center}
\includegraphics[width=300pt]{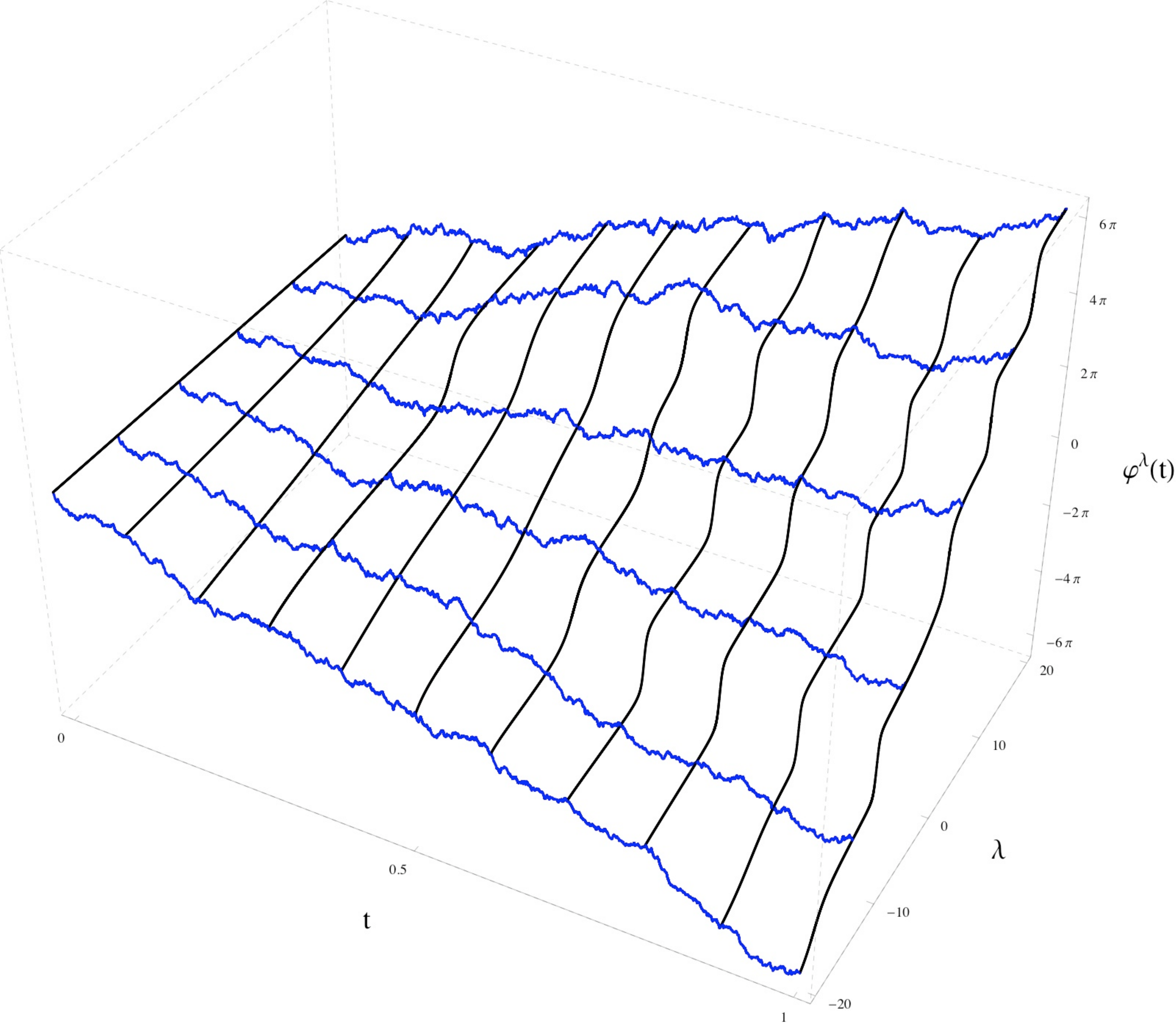}
\end{center}
\caption{The phase function $\varphi^\lambda(t)$ for
$(t,\lambda)\in[0,1]\times[-20,20]$.}
\end{figure}

\begin{cor}[Schr\"odinger random
analytic functions]\label{limitPointProcess} Consider the
family of SDE's
\begin{equation}\label{SDEphase}
d\varphi^\la(t)=\la dt+ d\cB+\re\left[
e^{-i\varphi^\la(t)}d\cW \right],\quad \varphi^\la(0)=0
\end{equation}
coupled together for all values of $\lambda\in \Real$ where
$\cB$ and $\cW$ are standard real and complex Brownian
motions.  This has a unique strong solution and for each
time $t$ the function $\la\mapsto \varphi^\la(t)$ is
strictly increasing and real-analytic with probability one.

Moreover, for  $0<|E|<2$ and with $\tau=(\sigma \rho)^2$
the point process $\Lambda_n-\arg(z^{2n+2})-\pi$ converges 
in distribution to the point process
\begin{equation}\label{e:schdef}
\sch:= \set{\lambda: \varphi^{\lambda/\tau}(\tau)\in 2 \pi
\Z}.\end{equation}
\end{cor}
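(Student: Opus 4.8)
My plan is to (i) derive \eqref{SDEphase} from the matrix SDE \eqref{LimitingTransfer}, (ii) extract the analyticity and strict monotonicity in $\lambda$, and (iii) read off the point process limit from Corollary~\ref{ConvergenceOfPoint} after compensating for the deterministic rotation $T^{n}(E)$. For (i) I would set $Z^{-1}\X^{\lambda}(t)\bin{1}{0}=\bin{i\psi^{\lambda}(t)}{\overline{i\psi^{\lambda}(t)}}$; since $\det\X^{\lambda}(t)\equiv1$ by \eqref{LimitingTransfer}, the scalar $\psi^{\lambda}(t)$ never vanishes, so $\varphi^{\lambda}(t):=2\,\im\log\psi^{\lambda}(t)$ is well defined along continuous paths, with $\varphi^{\lambda}(0)=0$ and $e^{i\varphi^{\lambda}}=\psi^{\lambda}/\overline{\psi^{\lambda}}$. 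Left-multiplying \eqref{LimitingTransfer} by $Z^{-1}$ and taking the first coordinate gives $d\psi^{\lambda}=\tfrac12\of{i\lambda\psi^{\lambda}\,dt+i\psi^{\lambda}\,d\cB-\overline{\psi^{\lambda}}\,d\cW}$; It\^o's formula applied to $\im\log\psi^{\lambda}$, using $\overline{\psi^{\lambda}}/\psi^{\lambda}=e^{-i\varphi^{\lambda}}$ and L\'evy's characterization to recognize the driving complex martingale as a standard complex Brownian motion, should yield \eqref{SDEphase}. A unique strong solution of \eqref{SDEphase}, coupled over all $\lambda$ by the single pair $(\cB,\cW)$, exists by the standard Lipschitz theory---the drift is linear in $\lambda$ and free of $\varphi$, and $\varphi\mapsto\re[e^{-i\varphi}\,\cdot\,]$ has bounded derivative---and by uniqueness it equals $2\,\im\log\psi^{\lambda}$.

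For (ii), Theorem~\ref{DiffusionTransfer} gives that $\lambda\mapsto\X^{\lambda}(t)$ is analytic and that $\X^{\lambda}(t)$ is real for real $\lambda$, so $\psi^{\lambda}(t)=\tfrac{\rho}{2}\of{\X^{\lambda}(t)_{11}-z\,\X^{\lambda}(t)_{21}}$ is real-analytic and nonvanishing in $\lambda\in\Real$, and hence so is $\varphi^{\lambda}(t)$; joint continuity in $(t,\lambda)$ promotes this to ``for all $t$ at once, a.s.'' For strict monotonicity I would differentiate \eqref{SDEphase} in $\lambda$: $p^{\lambda}:=\partial_{\lambda}\varphi^{\lambda}$ then solves the linear equation $dp^{\lambda}=dt+p^{\lambda}\,d\beta^{\lambda}$ with $\beta^{\lambda}:=\int_{0}^{\cdot}\im[e^{-i\varphi^{\lambda}(s)}\,d\cW(s)]$ a continuous martingale, and variation of parameters gives an explicit formula for $p^{\lambda}(t)$ that is manifestly positive for $t>0$; thus $\lambda\mapsto\varphi^{\lambda}(t)$ is strictly increasing for every $t>0$, a.s. (This is the analogue of the monotonicity of the Pr\"ufer phase in \cite{VV}.)

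For (iii) I would pass to a subsequence $(n_{j})$ with $z^{n_{j}}\to w$ and, refining once more, $\arg(z^{2n_{j}+2})\to a^{\ast}\in\Real$. By Corollary~\ref{ConvergenceOfPoint}, $T^{n_{j}}(E)\to\tilde T=Z\mat{\overline w}{0}{0}{w}Z^{-1}$ and $\Lambda_{n_{j}}$ converges in law to the zero set of $\lambda\mapsto[\tilde T\X^{\lambda/\tau}(\tau)]_{11}$. Using $(1,0)Z=(\overline z,z)$ and the representation of $Z^{-1}\X^{\mu}(\tau)\bin{1}{0}$ from (i), one finds $[\tilde T\X^{\mu}(\tau)]_{11}=2\,\re\of{\overline{zw}\,i\psi^{\mu}(\tau)}$; since $2\arg\psi^{\mu}(\tau)=\varphi^{\mu}(\tau)$, careful bookkeeping of the branch of $\arg$ shows this vanishes precisely when $\varphi^{\mu}(\tau)\in\arg(z^{2n_{j}+2})+\pi+2\pi\Z$ in the limit, so $\Lambda_{n_{j}}$ converges in law to $\set{\lambda:\varphi^{\lambda/\tau}(\tau)\in a^{\ast}+\pi+2\pi\Z}$.

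It remains to remove the dependence on $a^{\ast}$, and here is the key observation: for any $b\in\Real$, setting $\delta=-b/\tau$ and $\phi^{\mu}(t):=\varphi^{\mu-\delta}(t)+\delta t$, the family $(\phi^{\mu})_{\mu}$ solves the same coupled system \eqref{SDEphase} as $(\varphi^{\mu})_{\mu}$ but driven by $\of{\cB,\int_{0}^{\cdot}e^{i\delta s}\,d\cW(s)}$, which has the same law as $(\cB,\cW)$; so by the strong uniqueness from (i), $\of{\varphi^{\mu-\delta}(\tau)+\delta\tau}_{\mu}\eqd\of{\varphi^{\mu}(\tau)}_{\mu}$, and comparing level sets at height $2\pi\Z$ (genuine point processes by strict monotonicity) gives $\set{\lambda:\varphi^{\lambda/\tau}(\tau)\in b+2\pi\Z}\eqd\sch+b$ for every $b$. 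Applying this with $b=a^{\ast}+\pi$ and translating by the convergent constants $\arg(z^{2n_{j}+2})+\pi$, I get that $\Lambda_{n_{j}}-\arg(z^{2n_{j}+2})-\pi$ converges in law to $\sch$; since $z^{n}$ ranges over a compact set, every subsequence admits such a sub-subsequence and the limit is always $\sch$, so the full sequence $\Lambda_{n}-\arg(z^{2n+2})-\pi$ converges in distribution to $\sch$. The hard part is precisely this: the subsequential limits depend a priori on the boundary phase $w$ through $\tilde T$, and one must see that the explicit shift collapses them all to $\sch$; this uses the rotational invariance of $\cW$ in \eqref{SDEphase} (equivalently, the covariance of the $\psi$-evolution under $\varphi\mapsto\varphi+\mathrm{const}$) in an essential way, and it is also what forces the precise additive constant. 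The It\^o computation in (i), the Lipschitz bound, and the linear-SDE computation in (ii) are routine.
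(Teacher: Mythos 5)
Your proposal is correct and follows essentially the same route as the paper: derive the scalar SDE for the phase from the matrix SDE by passing to $Z^{-1}\X^{\lambda}\binom{1}{0}$, use $\det\X^\lambda\equiv\det Z^{-1}$ to see $\psi^\lambda$ never vanishes so $\varphi^\lambda$ is well defined, read off the zero set of $[\tilde T\X^\lambda(\tau)]_{11}$ as a level set of $\varphi^{\lambda/\tau}(\tau)$, and then use the rotational invariance of the complex Brownian noise (the paper's Lemma~\ref{invariance}, which you rederive inline) to collapse the subsequential limits to a single process $\sch$ after the explicit shift. The small discrepancies — a sign on $d\cW$ (which the paper also notes is distributionally irrelevant), and your handling of the branch of $\arg$ via a refined subsequence where the paper instead observes that $2\arg(z^{n+1})-\arg(z^{2n+2})\in\{0,2\pi\}$ — are cosmetic and do not constitute gaps.
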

\begin{rem}\label{rem5}
\noindent Note that the SDE (\ref{SDEphase}) for a
\emph{fixed} $\lambda$ describes a Brownian motion with
variance $\frac{3}{2}$ and drift $\lambda$. The random
analytic function $\lambda\mapsto \varphi^\lambda(\tau)$ is
given by the values of these coupled drifted Brownian
motions at time $\tau$. In Lemma \ref{invariance} we show
that $\varphi^{\la/\tau}(\tau)+\theta\eqd
\varphi^{(\la+\theta)/\tau}(\tau)$ which means that
$\sch+\theta\eqd\set{\lambda: \varphi^{\lambda/\tau} \in
\theta+2 \pi \Z}$.
\end{rem}

Note that the point process $\sch$ is invariant under
translation by integer multiples of $2\pi$, but not under
other translations. To fix this, we consider a translation
by an independent uniform random variable:
$$
\schs=\sch+U[0,2\pi].
$$
This version can be described through a variant of the the
Brownian carousel introduced in \cite{VV} (the same is true
for $\sch$, but with more complicated boundary conditions).

\paragraph{The Brownian carousel.} Let $(\mathcal V(t),t\ge 0)$ be Brownian motion on the hyperbolic
plane $\mathbb H$. Pick a point on the boundary $\partial
\mathbb H$ and let $x^\lambda(0)$ equal to this point for
all $\lambda\in \mathbb R$. Let $x^\lambda(t)$ be the
trajectory of this point rotated continuously around
$\mathcal V (t)$ at speed $\lambda$. Recall that Brownian
motion in $\mathbb H$ converges to a point $\mathcal
V(\infty)$ in the boundary $\partial \mathbb H$.

\begin{thm}[Brownian carousel
description]\label{thm:carousel} We have
\[\{\lambda:  x^{\lambda/\tau}(\tau)=\mathcal V(\infty)\} \;
\stackrel{d}{=}\; \schs.
\]
\end{thm}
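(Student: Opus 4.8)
\noindent The plan is to read the carousel description off the phase description of Corollary~\ref{limitPointProcess}: the carousel is set up precisely so that the boundary angle of $x^\la(t)$, measured relative to the limiting direction $\cV(\infty)$, is a copy of the phase function $\varphi^\la$. Work in the Poincar\'e disk model, so that $\mathbb H$ has boundary circle $\partial\mathbb H=\RR/2\pi\Z$ and, for an interior point $p$ and a boundary point $\xi$, there is a well-defined direction $\angle_p(\xi)\in T^1_p\mathbb H$ along the geodesic ray from $p$ toward $\xi$. Since Brownian motion on $\mathbb H$ is transient it converges almost surely to a random point $\cV(\infty)\in\partial\mathbb H$, and the starting observation is purely geometric: writing $\varphi^\la(t)$ for the angle at $\cV(t)$ between the directions $\angle_{\cV(t)}(\cV(\infty))$ and $\angle_{\cV(t)}(x^\la(t))$, one has $x^\la(\tau)=\cV(\infty)$ if and only if $\varphi^\la(\tau)\in 2\pi\Z$, because a geodesic ray issued from an interior point determines its endpoint. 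Hence $\{\la:x^{\la/\tau}(\tau)=\cV(\infty)\}=\{\la:\varphi^{\la/\tau}(\tau)\in 2\pi\Z\}$, and everything reduces to identifying the law of the random analytic field $\la\mapsto\varphi^{\la/\tau}(\tau)$.

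The main step is to compute the stochastic differential of $\varphi^\la$. Applying It\^o's formula in the unit tangent bundle $T^1\mathbb H\cong PSL(2,\RR)$, the direction $\angle_{\cV(t)}(x^\la(t))$ changes by the prescribed rotation $\la\,dt$ plus the rotation that the Brownian displacement of the base point induces on a visual direction, while $\angle_{\cV(t)}(\cV(\infty))$ changes by the corresponding induced rotation for the visual direction of the exit point; since $\varphi^\la$ is an intrinsic angle inside the single tangent space $T_{\cV(t)}\mathbb H$, forming the difference removes parallel-transport ambiguities and, by rotational symmetry about $\cV(t)$, yields a closed It\^o equation for $\varphi^\la$ whose coefficients depend on $\varphi^\la$ only, with driving noise built from functionals of the path of $\cV$ relative to $\cV(\infty)$. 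In the normalization and on the time scale used in the statement this equation is \eqref{SDEphase}, with the same horizon $\tau=(\sigma\rho)^2$ and rescaling $\la\mapsto\la/\tau$ as in Corollary~\ref{limitPointProcess}, but with the $\la$-independent \emph{random} initial value $\varphi^\la(0)=\varphi^0(0)$; the existence of a unique strong solution, strict monotonicity and real-analyticity of $\la\mapsto\varphi^{\la/\tau}(\tau)$ are then inherited from there.

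It remains to handle the initial value $\varphi^0(0)=\angle_{\cV(0)}(x^0(0))-\angle_{\cV(0)}(\cV(\infty))$, which does not depend on $\la$ because $x^0(0)$ is a fixed boundary point. Taking $\cV(0)$ to be the centre of the disk, $\angle_{\cV(0)}(\cV(\infty))$ is the exit direction of a hyperbolic Brownian motion started at the centre, hence uniform on $\partial\mathbb H$ by rotational invariance of harmonic measure, so $\theta:=-\varphi^0(0)$ is uniform on $[0,2\pi)$ modulo $2\pi$. Moreover, by rotational symmetry of the pair $(\cV(\cdot),\cV(\infty))$ about $\cV(0)$, the law of the path of $\cV$ relative to $\cV(\infty)$, and therefore the joint law of the Brownian motions $\cB,\cW$ driving the equation of the previous paragraph, does not depend on $\cV(\infty)$; hence $\theta$ is independent of $(\cB,\cW)$. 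Since \eqref{SDEphase} is equivariant under adding a constant $a$ to the solution at the cost of replacing $\cW$ by $e^{ia}\cW$, the field $\Phi^\la:=\varphi^\la-\varphi^0(0)$ is a copy of the phase field of Corollary~\ref{limitPointProcess} started at $0$, independent of $\theta$. Therefore $\{\la:x^{\la/\tau}(\tau)=\cV(\infty)\}=\{\la:\varphi^{\la/\tau}(\tau)\in 2\pi\Z\}=\{\la:\Phi^{\la/\tau}(\tau)\in\theta+2\pi\Z\}$, which conditionally on $\theta$ has law $\sch+\theta$ by Remark~\ref{rem5} (Lemma~\ref{invariance}); averaging over $\theta\sim U[0,2\pi]$ yields $\sch+U[0,2\pi]=\schs$, as claimed.

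The main obstacle is the SDE identification of the second paragraph: proving, from the geometric definition, that the angular gap relative to $\cV(\infty)$ is \emph{exactly} \eqref{SDEphase}. The subtle point is that the naive ``rotate a boundary point around a hyperbolic Brownian path'' prescription produces a Sine-type equation whose noise degenerates at $\varphi\in 2\pi\Z$, whereas \eqref{SDEphase} has the non-degenerate constant-coefficient noise $d\cB+\re[e^{-i\varphi}d\cW]$ of variance $\tfrac32$ (consistent with the stronger repulsion in the critical model); reconciling these is what pins down the precise ``variant of the Brownian carousel'' referred to in the text --- in particular the correct normalization (and, if needed, an adapted time change) of the driving Brownian motion on $\mathbb H$ --- so that the two induced visual rotations combine into that noise with no spurious drift. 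The remaining, more routine points to verify are: the almost sure existence of $\cV(\infty)$ and continuity of $\angle_{\cV(t)}(x^\la(t))$ and $\angle_{\cV(t)}(\cV(\infty))$ up to $t=\tau$, so that the event $\{x^{\la/\tau}(\tau)=\cV(\infty)\}$ and its description ``$\varphi^{\la/\tau}(\tau)\in 2\pi\Z$'' make sense; the well-posedness of the carousel as a pathwise flow driven by $\cV$; and the joint-in-$\la$ analyticity and monotonicity needed, together with Remark~\ref{rem5}, to justify the final averaging over $\theta$.
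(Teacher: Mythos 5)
Your route is genuinely different from the paper's, and it has a gap at the central step. You try to derive, from first principles, that the angle $\varphi^\la(t)$ at $\cV(t)$ between the direction to $\cV(\infty)$ and the direction to $x^\la(t)$ solves \eqref{SDEphase} with a uniform random initial value. That is the heart of the matter, but you never actually carry out the computation: the second paragraph is a verbal sketch of an It\^o calculation, and in your last paragraph you explicitly identify this SDE identification as ``the main obstacle'' without resolving it. The claim is in fact delicate. The point $\cV(\infty)$ is measurable with respect to $\sigma(\cV(s):s\ge 0)$ but not with respect to the natural filtration $\cF_t$, so $\varphi^\la(t)$ is not $\cF_t$-adapted; to make it a semimartingale one must pass to the enlarged filtration $\cF_t\vee\sigma(\cV(\infty))$, under which $\cV$ is a Doob $h$-transform of hyperbolic Brownian motion (conditioned to converge to a prescribed boundary point) and carries an additional drift. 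That the $h$-transform drift, the rotation at speed $\la$, and the visual-rotation noise combine to give exactly the non-degenerate equation \eqref{SDEphase} (rather than some other equation) is precisely what is left unproved. Similarly, the independence of $\theta=-\varphi^0(0)$ from the driving noise $(\cB,\cW)$ is asserted by a rotational-symmetry heuristic but not established for the conditioned process.

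The paper's proof sidesteps all of these issues. It never follows the angle to $\cV(\infty)$ as a process in $t$. Instead, it works only with the relative phase $\alpha^\la(t)$, the angle at $\cV(t)$ between $x_0$ and $x^\la(t)$, whose carousel SDE \eqref{SDErelphase1} was already established in \cite{VV}. Corollary \ref{limitPointProcess} together with Lemma \ref{invariance} shows that $\schs=\sch+U$ has the same law as $\{\la:\alpha^{\la/\tau}(\tau)=U\bmod 2\pi\}$ with $U$ uniform and independent of $\alpha$. The Markov property is then invoked once, at the single fixed time $\tau$: conditionally on $\cF_\tau$, the post-$\tau$ path is a fresh hyperbolic Brownian motion started at $\cV(\tau)$, so by rotational symmetry the angle at $\cV(\tau)$ from $x_0$ to $\cV(\infty)$ is uniform and independent of $(\cV(t))_{t\le\tau}$, hence can serve as $U$. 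No enlarged filtration, conditioning or time-reversal enters. To salvage your approach you would have to actually perform the It\^o computation for the conditioned diffusion and check that it produces \eqref{SDEphase}; as written this is asserted, flagged as the main obstacle, and not proved.
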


Section \ref{s:carousel} contains the proof of Theorem
\ref{thm:carousel} and a description of the ODE for
 $x^\lambda(t)$ in the Poincar\'e disk model of the
hyperbolic plane. Amazingly, a (less complete) connection
between random Schr\"odinger operators and Brownian motion
in the hyperbolic plane had been found already in \cite{GeVa}.
\bigskip

Note that in the previous theorems we assumed $0<|E|<2$.
The case $E=0$ is slightly different, but it gives similar
results. Note that in that case $z=i$.
\begin{thm}\label{theoremE=0}
Let $E=0$. In that case Theorem \ref{DiffusionTransfer},
and Corollaries \ref{ConvergenceOfPoint} and
\ref{limitPointProcess} hold with the following SDE's in
place of (\ref{LimitingTransfer}) and (\ref{SDEphase}):
\begin{equation}\label{LimitingTransfer0}
d\X^{\la}=\frac12  Z \of{ \mat{i \la}{0}{0}{- i \la} dt+
\mat{i d\cB_1}{ i d\cB_2}{ - i d\cB_2}{-i
d\cB_1}}Z^{-1}\X^{\la}, \qquad \X^{\la}(0)=I.
\end{equation}
and
\begin{equation}\label{SDEphase0}
d\varphi^\la=\la dt+ d\cB_1+ \cos(\varphi^\la)
d\cB_2+\frac{1}{4} \cos(2\varphi^\la) dt,\quad
\varphi^\la(0)=0.
%
%
%
\end{equation}
\end{thm}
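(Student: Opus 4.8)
The plan is to reduce Theorem \ref{theoremE=0} to the calculations already carried out for the case $0<|E|<2$, tracking what changes when $z=i$. First I would revisit the proof of Theorem \ref{DiffusionTransfer} at $E=0$: the diagonalization \eqref{zdef} still makes sense with $z=i$, $\zb=-i$, $\rho=\rho(0)=1$, and $Z=\mat{-i}{i}{1}{1}$. The key structural input in the general proof is that $T^{-\ell}(E)\,T(E+\eps)\,T^{\ell-1}(E) = I + (\text{small drift and martingale terms})$; expanding $T(E+\eps)=T(E)+\eps\,\mat{1}{0}{0}{0}$ and conjugating by powers of $D=\mathrm{diag}(\zb,z)$, one gets an off-diagonal oscillatory factor $z^{\mp 2\ell}$ in front of the noise term. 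For $0<|E|<2$ these oscillations average out in the martingale CLT and produce the \emph{complex} Brownian motion $\cW$ with the rotationally symmetric covariance in \eqref{LimitingTransfer}. When $z=i$ we have $z^2=-1$, so $z^{2\ell}=(-1)^\ell$ merely alternates sign rather than rotating, and the quadratic variation computation yields a \emph{degenerate} limit: the two real Brownian motions driving the off-diagonal entries become equal up to sign, giving the matrix $\mat{i\,d\cB_1}{i\,d\cB_2}{-i\,d\cB_2}{-i\,d\cB_1}$ in \eqref{LimitingTransfer0}. So the only real work here is redoing the quadratic-variation bookkeeping with $z=i$ in place of a genuinely complex $z$; tightness and the SDE well-posedness arguments go through verbatim since the driving noise is still a finite-dimensional Brownian motion and the coefficients are still smooth.

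Next I would push this modified SDE through the derivation of the phase function exactly as in the general case. One still has $Z^{-1}$ mapping real vectors to conjugate-entry vectors (with $\rho=1$), so $Z^{-1}Q^\la(t)\binom10=\vect{i\psi^\la}{\overline{i\psi^\la}}$ is still valid, and $e^{i\varphi^\la}=\psi^\la/\overline{\psi^\la}$ defines the phase. Applying It\^o's formula to $\varphi^\la=\arg\psi^\la$ (equivalently $-i\log(\psi^\la/\overline{\psi^\la})$) using \eqref{LimitingTransfer0}: the $\la\,dt$ drift and the $d\cB_1$ term come out unchanged, while the single real noise $d\cB_2$ now enters with coefficient $\cos\varphi^\la$ (because the two off-diagonal entries are $\pm i\,d\cB_2$, the $e^{-i\varphi}$ and $e^{i\varphi}$ contributions add to a cosine rather than cancel into the $\re[e^{-i\varphi}d\cW]$ of \eqref{SDEphase}). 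Crucially, since $d\cB_2$ appears in two places that are correlated, the It\^o correction no longer vanishes: computing $\tfrac12\,\partial^2_{\psi}$ against the shared quadratic variation produces the extra finite-variation term $\tfrac14\cos(2\varphi^\la)\,dt$ in \eqref{SDEphase0}. I would verify this It\^o correction carefully as it is the one genuinely new computation — it is the place where the general-case formula and the $E=0$ formula diverge in a non-obvious way.

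Having established \eqref{SDEphase0}, Corollaries \ref{ConvergenceOfPoint} and \ref{limitPointProcess} follow by the same arguments as in the $0<|E|<2$ case: existence/uniqueness of a strong solution coupled over $\la$, monotonicity and real-analyticity of $\la\mapsto\varphi^\la(t)$ (these rely only on the structure $d\varphi^\la-d\varphi^{\la'}=(\la-\la')dt + O(\varphi^\la-\varphi^{\la'})$, unaffected by the cosine coefficients), and the identification of $\Lambda_{n_j}$'s limit as the zero set of $[\tilde T Q^{\lambda/\tau}(\tau)]_{11}$, which translates into $\{\lambda:\varphi^{\lambda/\tau}(\tau)\in 2\pi\Z\}$ (up to the deterministic shift by $\arg(z^{2n+2})$; note $z=i$ gives $z^{2n+2}=(-1)^{n+1}$, so the shift is just $0$ or $\pi$, absorbed in the subsequence). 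I would remark that one must check the subsequence convergence of $z^{n_j}$ is automatic here since $z=i$ has only four powers, so actually no subsequence is needed for $\Lambda_n$ itself, only a choice of sign.

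The main obstacle is the It\^o-correction computation in the second paragraph: in the generic case the off-diagonal noise is a genuine complex Brownian motion $\cW$ whose real and imaginary parts are independent, so the cross-terms in It\^o's formula cancel and no drift correction survives; at $E=0$ the degeneracy collapses $\cW$ onto a single real Brownian motion appearing with opposite signs in the two off-diagonal slots, and one must carefully track the resulting nonzero bracket $d\langle\psi,\psi\rangle$ and $d\langle\psi,\overline\psi\rangle$ through $\varphi^\la=-i\log(\psi/\overline\psi)$ to land precisely on $\tfrac14\cos(2\varphi^\la)\,dt$. Everything else is a routine specialization of machinery already in the paper.
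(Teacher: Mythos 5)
Your proposal is essentially the same as the paper's own (terse) argument: specialize the covariance computation of the martingale-CLT step to $z=i$, observe that the off-diagonal noise oscillation factor becomes real-valued (so $\mathbb{E}\zeta_\ell^2=-1$, forcing $\re\zeta_\ell\to 0$ and collapsing the complex noise to the degenerate form in \eqref{LimitingTransfer0}), then push this degenerate SDE through the same It\^o derivation of the phase function and the same point-process identification. The paper frames the degeneracy through $z^{4j}=\bar z^{4j}=1$, which changes the diffusion coefficient $a(t,x)$ fed into Proposition \ref{convergence}; your phrasing via ``$z^{2\ell}=(-1)^\ell$ is real, not genuinely rotating'' is the same observation from the other side of the covariance identity \eqref{covariances}. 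Your remark that $z=i$ has only four powers, so that passing to a residue class of $n$ mod $4$ suffices (rather than a genuine subsequence), is also made in the paper. So the route is the same.

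One thing to press on when you actually carry out the promised verification: redoing the It\^o computation for the phase (with $\psi=iX_{11}$, $e^{i\varphi}=\psi/\bar\psi$, and the degenerate noise $d\cW=i\,d\cB_2$) gives, for the quadratic-variation term, $\left(\frac{d\psi}{\psi}\right)^2=-\tfrac14\bigl(1+e^{-2i\varphi}\bigr)dt$, and hence an It\^o correction to $d\varphi$ of $-\im\!\left[\left(\frac{d\psi}{\psi}\right)^2\right]=-\tfrac14\sin(2\varphi)\,dt$, together with the noise term $-\cos\varphi\,d\cB_2$. This does not coincide with the drift $+\tfrac14\cos(2\varphi)\,dt$ written in \eqref{SDEphase0}, and the discrepancy is not removable by a phase shift or relabeling of Brownian motions. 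Since you correctly flag this as ``the one genuinely new computation'' and propose to verify it rather than take it on faith, this is a point in your favor; but do perform the check explicitly before signing off on the precise functional form of the drift correction, since it is not spelled out in the paper either.
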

Note that since $z=i$ we just need to fix the remainder of
$n_j \textup{ mod }4$ for $z^{n_j}$ to converge and the
parity of $n_j$ for $\arg(z^{2n_j+2})$ to converge.

\subsection*{Properties of the limiting eigenvalue process for the critical model}

We now discuss some of the properties of the limit process
$\sch$. We describe the eigenvalue repulsion,  we compute
the intensity of the point process and then give the
asymptotic probability of finding a large gap. We also
provide a central limit theorem for the number of points in
a growing interval. Let $\sch[a,b]$ denote the number of
points of $\sch$ in the set $[a,b]$.

\begin{thm}[Eigenvalue repulsion]\label{repulsion} For $\mu\in \Real$ and $\eps>0$
we have
\begin{equation}\label{bound2eig}
\Prob\set{\sch[\mu,\mu+\eps]\geq 2}\leq 4\exp\left
(-(\log(\tau/\eps)-\tau)^2/\tau\right).\end{equation}
whenever the squared expression is nonnegative.
\end{thm}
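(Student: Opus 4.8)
\emph{Overview.} The plan is to reduce the two‑point event to a large‑deviation bound for the gap between two coupled phase functions, and then to analyze that gap through a logarithmic change of variables which turns its diffusive part into a Brownian motion run on a \emph{deterministic} clock; the repulsion then comes from the quadratic vanishing of the noise at gap $0$.

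\emph{Reduction.} Using the invariance of Remark~\ref{rem5} we may assume $\mu=0$. Set $D(t)=\varphi^{\eps/\tau}(t)-\varphi^{0}(t)$, so $D(0)=0$. Since $\la\mapsto\varphi^{\la/\tau}(\tau)$ is strictly increasing (Corollary~\ref{limitPointProcess}), the event $\{\sch[0,\eps]\ge2\}$ forces two values $\la_1<\la_2$ in $[0,\eps]$ with $\varphi^{\la_i/\tau}(\tau)\in2\pi\Z$, hence $D(\tau)\ge2\pi$. As $D$ is continuous with $D(0)=0$, this implies $\sigma:=\inf\{t:D(t)=\pi\}\le\tau$, so it suffices to bound $\Prob\{\sigma\le\tau\}$.

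\emph{The gap SDE and the logarithmic transform.} Subtracting the two copies of~(\ref{SDEphase}) the additive noise $d\cB$ cancels, leaving
\[
dD=\frac{\eps}{\tau}\,dt+dN,\qquad d\langle N\rangle_t=\bigl(1-\cos D(t)\bigr)\,dt=2\sin^2\!\bigl(D(t)/2\bigr)\,dt ,
\]
with $D\ge0$. Put $G=\log\tan(D/4)$. It\^o's formula gives
\[
dG=\Bigl(\frac{\eps/\tau}{2\sin(D/2)}-\frac{\cos(D/2)}{4}\Bigr)dt+\frac{1}{2\sin(D/2)}\,dN ,
\]
and the martingale part of $G$ has quadratic variation $t/2$, which is deterministic; hence $G_t=G_0+\int_0^t(\text{drift})\,ds+\tfrac1{\sqrt2}\beta_t$ for a standard Brownian motion $\beta$. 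On $D\in[0,\pi]$ the curvature term $-\tfrac14\cos(D/2)$ is nonpositive, and on $\{D\ge\eps\}$ we have $\sin(D/2)\ge\eps/\pi$, so the remaining drift of $G$ is at most $\pi/(2\tau)$.

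\emph{Conclusion.} On $\{\sigma\le\tau\}$ let $\alpha\le\sigma$ be the first time $D$ reaches $\eps$ (so $\alpha\le\tau$). On $[\alpha,\sigma]$ the process $G$ climbs from $\log\tan(\eps/4)$ to $\log\tan(\pi/4)=0$, a net rise of $-\log\tan(\eps/4)\ge\log(\pi/\eps)$ (using $\tan(\eps/4)\le\eps/\pi$ for $\eps\le\pi$). Away from the excursions of $D$ below $\eps$ the $G$‑drift over $[\alpha,\sigma]$ is at most $(\pi/2\tau)\cdot\tau=\pi/2$, so the Brownian part $\tfrac1{\sqrt2}\beta$ must increase by at least $\log(\pi/\eps)-O(1)$ over a time interval of length $\le\tau$; a Gaussian maximal bound $\Prob\{\max_{[0,\tau]}\beta\ge x\}=2\Prob\{\beta_\tau\ge x\}\le2e^{-x^2/(2\tau)}$ then produces an estimate of the form $C\exp\bigl(-(\log(1/\eps)-O(1+\tau))^2/\tau\bigr)$; tracking constants carefully — in particular replacing the real time by the clock $\langle N\rangle_\tau\le2\tau$, which is where $\log\tau$ and the $-\tau$ enter — yields the stated bound $4\exp\bigl(-(\log(\tau/\eps)-\tau)^2/\tau\bigr)$.

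\emph{Main obstacle.} The step glossed above is the real difficulty: the $G$‑drift $\frac{\eps/\tau}{2\sin(D/2)}$ blows up as $D\downarrow0$, so the drift of $G$ over $[\alpha,\sigma]$ cannot be controlled naively once $D$ dips back below $\eps$. I would deal with this by stopping at $\alpha$ and invoking the strong Markov property, then either (i) replacing $D$ on the sub‑intervals where $D<\eps$ by a reflection at $\eps$ — which only increases the probability of reaching $\pi$ while keeping the $G$‑drift $\le\pi/(2\tau)$ — or (ii) passing to the clock $u=\langle N\rangle_t$, in which $D$ becomes a one‑dimensional diffusion with unit diffusion coefficient and drift $\frac{\eps/(2\tau)}{\sin^2(x/2)}$, with $0$ an entrance boundary, and estimating directly the small probability that the climb to $\pi$ is completed within clock time $\le2\tau$.
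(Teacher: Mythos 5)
You start with the same reduction and the same log--tangent transform as the paper: $D(t)=\varphi^{\eps/\tau}(t)-\varphi^0(t)$ is the relative phase $\alpha^{\eps/\tau}$, your SDE for $D$ is exactly \eqref{sineSDE}, and $G=\log\tan(D/4)$ is the paper's $Y$. Your It\^o computation for $dG$ is correct and matches the paper's $dY=\frac{\eps/\tau}{2}\cosh Y\,dt+\frac14\tanh Y\,dt+\frac{1}{\sqrt 2}\,dB$. So the strategy is right. But the ``main obstacle'' you flag at the end is precisely where the proof is incomplete, and neither of your two proposed fixes is carried out; moreover the exponent your version would give does not match \eqref{bound2eig}.

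Concretely: you stop $D$ at $\pi$ and only control the $G$-drift on $\{D\ge\eps\}$, leaving the excursions of $D$ back below $\eps$ uncontrolled because the drift $\frac{\eps/\tau}{2\sin(D/2)}$ is unbounded there. The paper sidesteps this entirely by a \emph{comparison with a shifted initial condition}: let $\tilde Y$ solve the same SDE with $\tilde Y(0)=0$ (i.e.\ $\tilde D(0)=\pi$), driven by the same noise. By pathwise monotonicity $\tilde Y\ge Y$, so if $Y$ explodes (i.e.\ $D$ reaches $2\pi$) then $\tilde Y\to+\infty$ as well, hence $\sup_{[0,\tau]}|\tilde Y|\ge\log(\tau/\eps)$. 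The crucial point is that for $\eps/\tau\le1$ the drift $\frac{\eps/\tau}{2}\cosh y+\frac14\tanh y$ is bounded in absolute value by $1$ on the whole box $|y|\le\log(\tau/\eps)$ --- including negative $y$, which is exactly the regime that gives you trouble. Since $\tilde Y$ starts at $0$ (not $-\infty$), and the drift stays $\le1$ until $|\tilde Y|$ first exits the box, one gets $|\tilde Y(t)-\tfrac1{\sqrt2}B(t)|\le t$ on $[0,T]$ where $T$ is that exit time, forcing $\tfrac1{\sqrt2}|B(T)|\ge\log(\tau/\eps)-T$ and then the Gaussian maximal bound. This is both simpler than a reflection or time-change argument and produces the exact constants $\log(\tau/\eps)$ and $-\tau$ in the statement. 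Your choice to target $D=\pi$ rather than $D=2\pi$ also loses the correct rise: starting from $G=\log\tan(\eps/4)$ and ending at $G=0$ gives a climb of about $\log(1/\eps)$ rather than $\log(\tau/\eps)$, so ``tracking constants carefully'' would not yield \eqref{bound2eig} without this additional shift.

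In short: right transform and right Brownian-maximal endgame, but the step you correctly identify as the crux is left open. The missing idea is the comparison with $\tilde Y(0)=0$ together with the observation that, on $\{|y|\le\log(\tau/\eps)\}$, the drift of $Y$ is uniformly bounded by $1$; this replaces any reflection/clock-change construction.
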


\begin{rem}
In case of the classical random matrix models GOE, GUE, GSE
the eigenvalue repulsion is a lot weaker: it is of the
order of $\eps^{2+\beta}$ where $\beta=1, 2$ and $4$ in the
respective cases.
\end{rem}

\begin{thm}[Intensity of the point process]\label{intensity} The intensity measure $A\mapsto
\Exp\,\sch(A)$ has density
\[
\sum_{k} p(2\pi k+x), \qquad p(y)=\frac{1}{\sqrt{3 \pi\tau
}}e^{-\frac{1}{3\tau} y^2}
\]
at $x$. This is the density of a centered normal random
variable with variance $\frac{3}{2}\tau$ mod $2\pi$ (a
theta function).
\end{thm}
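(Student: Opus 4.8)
\emph{Proof plan.} Throughout assume $0<|E|<2$, so that the phase is governed by the SDE \eqref{SDEphase}; write $f(\lambda):=\varphi^{\lambda/\tau}(\tau)$, so that by definition $\sch=\{\lambda:f(\lambda)\in 2\pi\Z\}$. By Corollary \ref{limitPointProcess}, $\lambda\mapsto f(\lambda)$ is almost surely strictly increasing and real-analytic. The strategy is to realize $\sch$ as the image of the lattice $2\pi\Z$ under the inverse function $f^{-1}$, and then to use only the one-dimensional Gaussian law of $f(\lambda)$ at a single $\lambda$. First I would verify that $f$ is a.s.\ a bijection of $\Real$ onto $\Real$. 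By monotonicity the limits $L_\pm:=\lim_{\lambda\to\pm\infty}f(\lambda)$ exist in $[-\infty,+\infty]$. On the other hand, for each fixed $\mu$ the process $t\mapsto\varphi^{\mu}(t)$ is, by Remark \ref{rem5}, a Brownian motion started at $0$ with drift $\mu$ and variance parameter $\tfrac32$ (the martingale part of \eqref{SDEphase} has deterministic quadratic variation $\tfrac32\,dt$, so Lévy's characterization applies), whence
\begin{equation}\label{e:gmarg}
f(\lambda)=\varphi^{\lambda/\tau}(\tau)\sim\mathcal N\!\of{\lambda,\tfrac32\tau}.
\end{equation}
Hence $f(\lambda)\to+\infty$ in probability as $\lambda\to+\infty$, which forces $L_+=+\infty$ a.s., and symmetrically $L_-=-\infty$ a.s. Thus $N:=f^{-1}$ is a.s.\ a continuous strictly increasing bijection of $\Real$, $\sch=\{N(2\pi k):k\in\Z\}$, and since $k\mapsto N(2\pi k)$ is injective, $\sch(A)=\sum_{k\in\Z}\mathbf{1}[N(2\pi k)\in A]$.

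Next I would compute the law of the single random variable $N(y)$ for fixed $y\in\Real$. Since $f$ is increasing, $\{N(y)\le x\}=\{f(x)\ge y\}$ on the full-measure event above, so by \eqref{e:gmarg} and symmetry of the Gaussian,
\[
\Prob[N(y)\le x]=\Prob\!\left[\mathcal N\!\of{x,\tfrac32\tau}\ge y\right]=\Prob\!\left[\mathcal N\!\of{0,\tfrac32\tau}\le x-y\right],
\]
and differentiating in $x$ shows that $N(y)$ has density $x\mapsto p(x-y)$, where $p(u)=\tfrac1{\sqrt{3\pi\tau}}e^{-u^2/(3\tau)}$ is exactly the $\mathcal N(0,\tfrac32\tau)$ density. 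By Tonelli, $\Exp\,\sch(A)=\sum_{k}\Prob[N(2\pi k)\in A]=\sum_k\int_A p(x-2\pi k)\,dx$, so the intensity measure has density $\sum_k p(x-2\pi k)$ at $x$; since $p$ is even and the sum runs over all of $\Z$ this equals $\sum_k p(2\pi k+x)$, which is precisely the density of an $\mathcal N(0,\tfrac32\tau)$ random variable reduced modulo $2\pi$ (a Jacobi theta function), as claimed.

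The only step requiring genuine care is the a.s.\ bijectivity of $f$ in the first paragraph (and, in passing, the measurability/continuity of $N$ needed to make the inverse-image description rigorous); given that, the remainder is a streamlined Kac--Rice computation which, thanks to the monotonicity of $\lambda\mapsto f(\lambda)$, needs nothing beyond the elementary Gaussian marginal \eqref{e:gmarg} and never touches the joint law of $(f(\lambda),f'(\lambda))$. As a consistency check, $\int_0^{2\pi}\sum_k p(x+2\pi k)\,dx=\int_{\Real}p=1$, i.e.\ the limiting process carries on average one point per period $2\pi$, in agreement with $\sch=\{N(2\pi k):k\in\Z\}$.
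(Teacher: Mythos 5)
Your argument is correct and takes a genuinely different, shorter route than the paper's. The paper does a Kac--Rice computation: it introduces the $\lambda$-derivative $\varpi^\mu(\tau)=\partial_\lambda\varphi^\lambda(\tau)|_{\lambda=\mu}$, linearizes $\varphi^{\mu+\eps}(\tau)\approx\varphi^\mu(\tau)+\eps\,\varpi^\mu(\tau)$, exploits the structural fact that $\varphi^\mu(\tau)$ and $\varpi^\mu(\tau)$ are \emph{independent} (the driving noises $\re[d\cZ^\lambda]$ and $\im[d\cZ^\lambda]$ are independent), uses $\Exp\varpi^\mu(\tau)=\tau$, and then kills the linearization error with the second-derivative bound of Lemma \ref{l:secondderivative}. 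You instead use the a.s.\ strict monotonicity from Corollary \ref{limitPointProcess} to realize $\sch$ as the image of $2\pi\Z$ under $N=f^{-1}$ (with $f(\lambda)=\varphi^{\lambda/\tau}(\tau)$), which reduces everything to the one-point Gaussian marginal supplied by Remark \ref{rem5}, the pathwise inversion identity $\{N(y)\le x\}=\{f(x)\ge y\}$, and a Fubini for nonnegative integrands; you never touch $\varpi$, the independence of $\varphi$ and $\varpi$, or any derivative estimate. The one nontrivial point --- that $f$ is a.s.\ a bijection of $\Real$ onto $\Real$ so that $N$ is globally defined --- you settle correctly by combining monotonicity with divergence of the Gaussian marginal. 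What the paper's local Kac--Rice route buys in exchange for the extra machinery is flexibility (it would survive in a situation where $\lambda\mapsto\varphi^\lambda(\tau)$ were not monotone and level crossings had to be counted with multiplicity or sign), but for this specific statement your argument is cleaner and uses strictly less structure.
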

\begin{rem}
If the random variables $\omega_\ell $
have a a bounded probability density $g(x)dx$, then the
general Wegner and Minami's estimates for random discrete
Schr\"{o}dinger operators \citep{Mi,GV,BHS,CKG} give
\begin{equation}\label{wegner}
\Prob(\La_n[\mu_1,\mu_2]\geq 1)\le
\sqrt{n}\sigma^{-1}\|g\|_\infty  \of{\mu_2-\mu_1},
\end{equation}
and
\begin{equation}\label{minami}
\Prob\of{\La_n[\mu_1,\mu_2]\geq 2}\leq \frac{\pi^2}{2}
n\sigma^{-2}\|g\|_\infty^2 \of{\mu_2-\mu_1}^2.
\end{equation}
Since we rescale the potential by $\sqrt{n}$, both
\eqref{wegner} and \eqref{minami} diverge as $n\To\infty$,
whereas Theorems \ref{repulsion} and \ref{intensity} give
effective bounds. Moreover, this theorem applies to
singular potentials, such as Bernoulli random variables
$\pm 1$ with probability $1/2$.
\end{rem}
\begin{thm}[Probability of large gaps] \label{gaps} The probability that $\sch$ has a large gap is \[
 \Prob
(\sch[0,\lambda]=0)=\exp\left\{-\frac{\lambda^2}{4\tau}(1+o(1))\right\}
\]
where $o(1)\to 0$ for a fixed $\tau$ as $\lambda\to
\infty$.
\end{thm}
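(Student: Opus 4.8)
The plan is to translate the gap event $\{\sch[0,\lambda]=0\}$ into a statement about the coupled family of drifted Brownian motions $\varphi^{\nu}(\tau)$, $\nu \in \Real$, and then obtain matching upper and lower bounds of the form $\exp\{-\lambda^2/(4\tau)(1+o(1))\}$. Recall from Corollary \ref{limitPointProcess} and Remark \ref{rem5} that, for fixed $\nu$, $t\mapsto \varphi^{\nu}(t)$ is a Brownian motion with variance $\tfrac32$ and drift $\nu$, and that $\nu\mapsto \varphi^{\nu}(\tau)$ is a.s.\ strictly increasing. Hence $\sch\cap[0,\lambda]$ counts the values $\nu=\nu/\tau$ with $\varphi^{\nu/\tau}(\tau)\in 2\pi\Z$; using the strict monotonicity in $\nu$, the event $\{\sch[0,\lambda]=0\}$ is, up to the (deterministic, bounded) starting phase, the event that the increasing path $\nu\mapsto \varphi^{\nu/\tau}(\tau)$ does not cross a full period $2\pi$ as $\nu$ runs over $[0,\lambda]$, i.e.\ roughly $\{\varphi^{\lambda/\tau}(\tau)-\varphi^{0}(\tau) < 2\pi\}$. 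Since $\varphi^{0}(\tau)$ is an $O(1)$ random variable (Gaussian of variance $\tfrac32\tau$), the dominant contribution comes from forcing $\varphi^{\lambda/\tau}(\tau)$ to be small, and $\varphi^{\lambda/\tau}(\tau)$ is Gaussian with mean $\lambda$ and variance $\tfrac32\tau$.

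For the \textbf{upper bound}: the event $\{\sch[0,\lambda]=0\}$ is contained in an event of the form $\{\varphi^{\lambda/\tau}(\tau)\le C\}$ for a fixed constant $C$ (absorbing the $2\pi$ and the boundary phase), by monotonicity in $\nu$. Since $\varphi^{\lambda/\tau}(\tau)\sim \mathcal N(\lambda,\tfrac32\tau)$, a standard Gaussian tail estimate gives $\Prob(\varphi^{\lambda/\tau}(\tau)\le C)\le \exp\{-(\lambda-C)^2/(3\tau)\}$. This has the wrong constant ($1/(3\tau)$ rather than $1/(4\tau)$), so the naive bound is not tight; the improvement to $\lambda^2/(4\tau)$ must come from the fact that, conditioned on being small at time $\tau$, the path $t\mapsto \varphi^{\lambda/\tau}(t)$ is pinned, and one gets extra cost, OR — more likely the intended route — one should not use the terminal value alone but rather a Girsanov/change-of-measure argument on the whole SDE \eqref{SDEphase} that optimizes the drift removal. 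Concretely, apply Girsanov to tilt the Brownian motion $\cB$ (and use the rotational structure of the $\re[e^{-i\varphi}d\cW]$ term, whose quadratic variation is $\tfrac12\,dt$ regardless of $\varphi$) so that under the new measure $\varphi^{\lambda/\tau}$ has no net drift; the Radon–Nikodym cost of preventing the phase from advancing by $\sim\lambda$ over time $\tau$ against total noise variance $\tfrac32\tau$ is then optimized, and the optimal deterministic path yields the constant $\lambda^2/(4\tau)$. I expect this optimization — identifying the right variational problem and showing the $\tfrac12$-variance contribution from the $\cW$ term enters with the right weight — to be the main obstacle.

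For the \textbf{lower bound}: exhibit an explicit favorable scenario. Restrict to the event that $\cB$ follows (within a tube of width $\delta$) the deterministic path that exactly cancels the drift $\nu\,t$ for $\nu\approx \lambda$, and that $\cW$ stays in a small ball; by the support theorem / Cameron–Martin formula the probability of such a tube is $\exp\{-I + o(\lambda^2/\tau)\}$ where $I$ is the Cameron–Martin action of the steering path, which one computes to be $\lambda^2/(4\tau)(1+o(1))$ after optimizing over the time profile (a straight-line drift profile is optimal by Cauchy–Schwarz). On this event $\varphi^{\nu/\tau}(\tau)$ stays in, say, $(-\pi,\pi)$ for all $\nu\in[0,\lambda]$, so $\sch[0,\lambda]=0$, giving the matching lower bound. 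The remaining routine steps are: (i) checking that the SDE \eqref{SDEphase} admits the required Girsanov transform with integrable Radon–Nikodym derivative (the drift $e^{-i\varphi}d\cW$ term is bounded, so Novikov holds trivially); (ii) carrying out the one-dimensional calculus-of-variations minimization $\min \tfrac12\int_0^\tau \dot h(t)^2/(\text{var rate})\,dt$ subject to the endpoint constraint, which yields the linear minimizer and the value $\lambda^2/(4\tau)$; and (iii) handling the $O(1)$ boundary-phase shift $\arg(z^{2n+2})+\pi$ and the period-$2\pi$ slack, all of which are absorbed into the $o(1)$.
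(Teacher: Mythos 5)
Your proposal contains a fundamental error in the direction of the first estimate that would derail the whole argument. You claim $\{\sch[0,\lambda]=0\}\subset\{\varphi^{\lambda/\tau}(\tau)\le C\}$ and deduce an upper bound $\exp\{-\lambda^2/(3\tau)(1+o(1))\}$, then say this constant must be ``improved'' to $1/(4\tau)$ by a conditioning argument. But $1/(3\tau)>1/(4\tau)$, so $\exp\{-\lambda^2/(3\tau)\}$ is \emph{smaller} than the true answer $\exp\{-\lambda^2/(4\tau)(1+o(1))\}$; if your containment were correct it would \emph{contradict} the lower bound of the theorem rather than merely give a suboptimal upper bound. In fact the containment is false: a gap over $[0,\lambda]$ constrains $\varphi^{\lambda/\tau}(\tau)-\varphi^{0}(\tau)$, not $\varphi^{\lambda/\tau}(\tau)$ itself, and $\varphi^0(\tau)$ is a Gaussian whose large deviations to $O(\lambda)$ carry cost comparable to what is being computed, so they cannot be discarded as ``$O(1)$.''

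The key structural observation that your proposal misses, and that drives the paper's proof, is that the relative phase $\alpha = \varphi^{\lambda/\tau}-\varphi^{0}$ satisfies the \emph{autonomous} SDE \eqref{sineSDE}, namely $d\alpha = (\lambda/\tau)\,dt + \sqrt{2}\sin(\alpha/2)\,d\cB$ with $\alpha(0)=0$. Two things happen here: the common Brownian motion $\cB$ in \eqref{SDEphase} cancels in the difference, and the remaining $\cW$-noise in $\varphi^{\lambda/\tau}$ and $\varphi^{0}$ is coupled through the exponentials $e^{-i\varphi}$ so that the effective noise coefficient in $\alpha$ is $|e^{-i\varphi^{\lambda/\tau}}-e^{-i\varphi^0}|/\sqrt{2}=\sqrt{2}|\sin(\alpha/2)|$, which is at most $\sqrt{2}$. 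The quadratic variation of the noise is therefore at most $2\,dt$, and the event $\{\alpha(\tau)\le 2\pi\}$ forces a $\lambda$-sized noise excursion against variance at most $2\tau$, which is where the constant $1/(4\tau)$ (not $1/(3\tau)$) comes from; the optimal strategy is for $\alpha$ to spend essentially all of its time near $\pi$ where the noise is maximal. Your Girsanov remark about $\re[e^{-i\varphi}d\cW]$ having constant rate $\tfrac12$ is true for each fixed $\nu$ but ignores this crucial cancellation structure in the difference. The paper sandwiches $\Prob(\sch[0,\lambda]=0)$ between $\Prob(\alpha(\tau)\le 2\pi)$ and $\eps\, c_\tau\,\Prob(\alpha(\tau)\le 2\pi-\eps)$ (using independence of $\cB$ from the driving noise $\cZ$ of the $\alpha$-equation to handle the $\varphi^0(\tau)$ boundary condition), then appeals to Theorem~13 of \cite{VV}, which gives precisely the $\exp(-\lambda^2\|f\|_2^2/8(1+o(1)))$ asymptotics for such $\sin(\alpha/2)$-noise SDEs; plugging in the effective $f$ gives $\lambda^2/(4\tau)$. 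Your tube-argument lower bound could in principle be carried out, but without the reduction to the autonomous degenerate SDE there is no clean variational problem to solve, and the $1/(4\tau)$ would not fall out of a naive Cameron--Martin computation on \eqref{SDEphase}.
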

The following theorem shows that for large $\lambda$ the
number of points in $[0,\lambda]$ is close to a normal
random variable with mean $\lambda/(2\pi)$ and a
\emph{fixed} variance.
\begin{thm}[Central limit theorem]\label{thmclt}
As $\lambda\to \infty$ we have
\[\of{\varphi^{0}(\tau),\varphi^{\la}(\tau)-\la}\Rightarrow (\xi_0+\xi_1,\xi_0+\xi_2)
\]
where $\xi_0,\xi_1,\xi_2$ are independent mean zero normal
random variables with variances $\tau,\tau/2, \tau/2,$
respectively.

In particular, for $\theta\in[0,2\pi)$ and $k\to\infty$
along the integers we have
\begin{equation}\label{cltformula}
\sch[0,2\pi k+\theta]-k\Rightarrow
\left\lfloor\frac{\xi_0+\xi_2+\theta}{2\pi}\right\rfloor-\left\lfloor\frac{
\xi_0+\xi_1}{2\pi}\right\rfloor.
\end{equation}
\end{thm}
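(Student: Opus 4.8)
The plan is to realize both coordinates as time-$\tau$ values of explicit continuous martingales driven by $\cB$ and $\cW$, to compute their $2\times2$ quadratic-covariation matrix, to show that as the drift diverges the single non-deterministic entry of that matrix tends to zero, and then to read off the Gaussian limit from a complex exponential martingale. Put $\mu=\la/\tau$ (so that $\varphi^\mu$ in \eqref{SDEphase} carries drift $\mu$ and $\varphi^\mu(\tau)-\la=\varphi^\mu(\tau)-\mu\tau$ is the centered quantity of the theorem), write $\cB=\cB_1$, and, using $\re[e^{-i\alpha}\,d\cW]=\tfrac1{\sqrt2}\of{\cos\alpha\,d\cB_2+\sin\alpha\,d\cB_3}$, set
\[
I_\nu(t)=\int_0^t\re\!\left[e^{-i\varphi^\nu(s)}\,d\cW(s)\right],\qquad\nu\in\{0,\mu\}.
\]
Then $\varphi^0(\tau)=\cB(\tau)+I_0(\tau)$ and $\varphi^\mu(\tau)-\la=\cB(\tau)+I_\mu(\tau)$, while a one-line computation gives $\langle I_0\rangle_t=\langle I_\mu\rangle_t=t/2$ and $\langle\cB,I_0\rangle\equiv\langle\cB,I_\mu\rangle\equiv0$, all deterministic, with the one random entry
\[
\langle I_0,I_\mu\rangle_\tau=C^\mu:=\tfrac12\int_0^\tau\cos\!\of{\varphi^\mu(s)-\varphi^0(s)}\,ds,\qquad|C^\mu|\le\tau/2 .
\]
By the L\'evy characterization $\varphi^0(\tau)$ and $\varphi^\mu(\tau)-\la$ are each, marginally, exactly $N(0,\tfrac32\tau)\eqd\xi_0+\xi_1$ for every $\mu$, so the content of the theorem is the asymptotic \emph{joint} law.

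\textbf{Step 1 (oscillation estimate --- the heart of the argument).} I claim $C^\mu\to0$ in $L^2$ as $\mu\to\infty$. Write $\varphi^\mu(s)-\varphi^0(s)=\mu s+N^\mu(s)$ with $N^\mu(s)=\int_0^s\re[(e^{-i\varphi^\mu}-e^{-i\varphi^0})\,d\cW]$; the crucial fact is the bound $\langle N^\mu\rangle_s=\tfrac12\int_0^s|e^{-i\varphi^\mu}-e^{-i\varphi^0}|^2\,dr\le2s$, which is \emph{uniform in $\mu$}. Since $2C^\mu=\re\int_0^\tau e^{i\mu s}e^{iN^\mu(s)}\,ds$, integration by parts against the deterministic primitive $F(s)=(e^{i\mu s}-1)/(i\mu)$, which satisfies $|F|\le2/\mu$, together with $d(e^{iN^\mu})=e^{iN^\mu}\of{i\,dN^\mu-\tfrac12\,d\langle N^\mu\rangle}$, writes $2C^\mu$ as a boundary term of modulus $\le2/\mu$, a finite-variation term of modulus $\le2\tau/\mu$, and a stochastic integral with second moment $\le(2/\mu)^2\,\Exp\langle N^\mu\rangle_\tau\le8\tau/\mu^2$; all three tend to $0$. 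This is the Riemann--Lebesgue effect of the deterministic fast phase $e^{i\mu s}$, and it is the only step requiring real work --- the point being that the martingale remainder $N^\mu$ has bracket bounded independently of $\mu$.

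\textbf{Step 2 (joint characteristic function).} Fix $a,b\in\Real$ and set $M^\mu_t=(a+b)\cB(t)+aI_0(t)+bI_\mu(t)$, a continuous martingale with $M^\mu_\tau=a\varphi^0(\tau)+b\of{\varphi^\mu(\tau)-\la}$ and bounded bracket $\langle M^\mu\rangle_\tau=(a+b)^2\tau+\tfrac12(a^2+b^2)\tau+2ab\,C^\mu$. Hence $t\mapsto\exp\!\of{iM^\mu_t+\tfrac12\langle M^\mu\rangle_t}$ is a true martingale equal to $1$ at $t=0$, so
\[
\Exp\!\left[\exp\!\of{iM^\mu_\tau}\,\exp\!\of{ab\,C^\mu}\right]=\exp\!\of{-\tfrac12\big((a+b)^2\tau+\tfrac12(a^2+b^2)\tau\big)}.
\]
Because $|\exp(iM^\mu_\tau)|=1$ and $\exp(ab\,C^\mu)\to1$ boundedly in probability by Step 1, dominated convergence yields $\Exp\exp\!\of{i\big(a\varphi^0(\tau)+b(\varphi^\mu(\tau)-\la)\big)}\to\exp\!\of{-\tfrac12\big((a+b)^2\tau+\tfrac12(a^2+b^2)\tau\big)}$, which is precisely the characteristic function of $(\xi_0+\xi_1,\xi_0+\xi_2)$ at $(a,b)$. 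L\'evy's continuity theorem gives the first display of the theorem, with $\mu=\la/\tau\to\infty$.

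\textbf{Step 3 (from the phase to the counting function).} By Corollary \ref{limitPointProcess} the map $\la\mapsto\varphi^{\la/\tau}(\tau)$ is a.s.\ strictly increasing and real-analytic, and its marginal laws are atomless; hence a.s.\ $\sch[0,x]=\lfloor\varphi^{x/\tau}(\tau)/2\pi\rfloor-\lfloor\varphi^0(\tau)/2\pi\rfloor$ for every $x>0$, and with $x=2\pi k+\theta$ this reads $\sch[0,2\pi k+\theta]-k=\lfloor\of{\varphi^{(2\pi k+\theta)/\tau}(\tau)-2\pi k}/2\pi\rfloor-\lfloor\varphi^0(\tau)/2\pi\rfloor$. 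Applying Step 2 with $\la=2\pi k+\theta\to\infty$ gives $\of{\varphi^0(\tau),\,\varphi^{(2\pi k+\theta)/\tau}(\tau)-2\pi k}\Rightarrow\of{\xi_0+\xi_1,\,\xi_0+\xi_2+\theta}$, and since $(x,y)\mapsto\lfloor y/2\pi\rfloor-\lfloor x/2\pi\rfloor$ is continuous off a set that is null for this (continuous) limit law, the continuous mapping theorem yields \eqref{cltformula}. In summary, everything reduces to the oscillation estimate of Step 1; Steps 2--3 are standard applications of L\'evy's characterization, the exponential martingale, and continuous mapping.
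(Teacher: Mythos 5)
Your proof is correct, and it is built on the same underlying decomposition as the paper's: write both coordinates as $\cB(\tau)$ plus a $\cW$-integral, observe the marginals are exactly $N(0,\tfrac32\tau)$, and reduce the asymptotic independence to an oscillation estimate for the cross term. The difference is in packaging, and yours is arguably tighter. The paper decomposes $\xi_2$ against the fixed driving motions $\cB_1,\cB_2$ of $\cZ=\int e^{-i\varphi^0}d\cW$, obtaining $\xi_{2,1}=\tfrac1{\sqrt2}\int\cos\alpha^\la\,d\cB_1$, $\xi_{2,2}=\tfrac1{\sqrt2}\int\sin\alpha^\la\,d\cB_2$; it then applies Cram\'er--Wold and a time-changed-Brownian-motion representation, which forces two separate Riemann--Lebesgue estimates, $\int_0^\tau\cos\alpha^\la\,dt\to0$ \emph{and} $\int_0^\tau\cos 2\alpha^\la\,dt\to0$, each obtained by applying It\^o to $e^{i\alpha^\la}$ (resp.\ $e^{2i\alpha^\la}$) and dividing by $\lambda$. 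You instead keep $I_0$ and $I_\mu$ as they are; then each of $\langle\cB\rangle$, $\langle I_0\rangle$, $\langle I_\mu\rangle$ is deterministic, the only random entry in the covariation matrix is the single cross-bracket $C^\mu=\tfrac12\int_0^\tau\cos(\varphi^\mu-\varphi^0)$, and the bounded complex exponential martingale $\exp(iM^\mu+\tfrac12\langle M^\mu\rangle)$ together with dominated convergence and L\'evy's continuity theorem gives the joint Gaussian limit as soon as $C^\mu\to0$ in probability. Your integration by parts against $F(s)=(e^{i\mu s}-1)/(i\mu)$ with the uniform bracket bound $\langle N^\mu\rangle_s\le 2s$ is the same It\^o-by-parts mechanism as in the paper, but you only need to run it once. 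You also correctly note the implicit reparametrization $\mu=\lambda/\tau$ needed to reconcile the drift in \eqref{SDEphase} with the centering $\varphi(\tau)-\lambda$, which the paper's notation elides. Step 3 matches the paper's derivation of \eqref{cltformula} from the monotonicity and atomlessness of the phase.
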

%

\subsection*{The decaying model}

The decaying model \eqref{models}  can be thought of as the truncation of a discrete Schr\"odinger operator on the infinite half line with potential $v_k=\sigma \omega_k/\sqrt{k}$.
Similar operators have been studied in the literature
\citep[see][and references therein for earlier
works]{DSS,KiselevLastSimon}.
In these works, the standard deviation of the $k$th
diagonal element is on the order of $k^{-\al}$ for
$\alpha>0$. Depending on $\al$, the operator has different
generic spectral properties.
\begin{itemize} \item Slow decay: for
$0<\al<1/2$, the spectrum is pure point with
probability one. \item Fast decay: for $\al>1/2$, the
spectrum  is absolutely continuous with
probability one. \item Critical decay: for $\al=1/2$, and
small enough $\sigma$, the spectrum  is
singular continuous on an interval $(-c,c)$ and pure point
on $(-2,-c)\cup (c,2)$, with probability one.
\end{itemize}

It is natural to investigate the fine eigenvalue statistics
in these three cases. Motivated by this question, \cite{KS}
described the local behaviour of the spectrum in the
context of random CMV matrices, the unitary analog of
one-dimensional discrete Schr\"{o}dinger operators.

Our decaying model corresponds to the critical case. It
will be convenient to reverse the indices to have
$v_k=\sigma \omega_k/\sqrt{n+1-k}$.
We scale near $E\in (-2,2)\setminus\set{0}$ and define $\X_\ell ^\lambda$ as before. This process will converge to an SDE similar to (\ref{LimitingTransfer}), but the convergence will only hold on $[0,1)$. 
\begin{thm}\label{thmdecay1} We have the following limit on $[0,1)$:
\begin{equation}\label{LimitingTransferDecay}
d\X^{\la}= \frac12 Z\of{  \mat{i \la}{0}{0}{-i \la}dt +
\frac{\sigma\rho}{\sqrt{1-t}} \mat{ i d\cB}{ d\cW}{
d\overline\cW}{-i d\cB}}Z^{-1}\X^{\la}, \qquad
\X^{\la}(0)=I.
\end{equation}
in the sense of finite dimensional distributions for $\la$
and uniformly on compacts in $t$.
\end{thm}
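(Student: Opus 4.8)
The plan is to follow the proof of Theorem~\ref{DiffusionTransfer} essentially verbatim, the only new ingredient being that the noise variance now depends on the site. After reversing indices we have $v_k=\sigma\omega_k/\sqrt{n+1-k}$, so with $\mu=E+\la/(\rho n)$ the relevant errors are $\eps_\ell=\frac{\la}{\rho n}-\frac{\sigma\omega_\ell}{\sqrt{n+1-\ell}}$ and $\X_\ell^\la=T^{-\ell}(E)T(E+\eps_\ell)\cdots T(E+\eps_1)$, exactly as in \eqref{Xn}. Since $x\mapsto T(x)$ is affine, $T(E+\eps_\ell)=T(E)+\eps_\ell T'$ with $T'=\mat{1}{0}{0}{0}$, so the discrete recursion is \emph{exactly} $\X_\ell^\la-\X_{\ell-1}^\la=\eps_\ell R_\ell\X_{\ell-1}^\la$ with $R_\ell=T^{-\ell}(E)T'T^{\ell-1}(E)=ZD^{-\ell}(Z^{-1}T'Z)D^{\ell-1}Z^{-1}$, a bounded matrix whose diagonal part is constant in $\ell$ and whose off-diagonal part oscillates with $z^{\pm(2\ell-1)}$. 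First I would establish an a priori bound: restricted to $\ell\le n(1-\delta)$, where $\sum_{\ell\le n(1-\delta)}\sigma^2/(n+1-\ell)\sim\sigma^2\log(1/\delta)$ stays bounded, the method of Theorem~\ref{delocalization} should give that $\max_{\ell\le n(1-\delta)}\Tr\X_\ell^\la(\X_\ell^\la)^*$ is tight for each fixed $\delta>0$ and $\la$.

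Next, fixing $T=1-\delta$, I would split $\eps_\ell$ into its $\cF_{\ell-1}$-conditional mean $\la/(\rho n)$ and the martingale increment $-\sigma\omega_\ell/\sqrt{n+1-\ell}$ and compute the semimartingale characteristics of $\X^\la_{\lfloor n\cdot\rfloor}$. The predictable drift is $\frac{\la}{\rho n}\sum_{\ell\le nt}R_\ell\X^\la_{\ell-1}$; since the Ces\`aro average of $D^{-\ell}(Z^{-1}T'Z)D^{\ell-1}$ kills the oscillatory off-diagonal part, this converges to $\frac12\int_0^t Z\mat{i\la}{0}{0}{-i\la}Z^{-1}\X^\la(s)\,ds$, the same drift as in \eqref{LimitingTransfer} — the constant is identified by the same linear-algebra computation as in Theorem~\ref{DiffusionTransfer}. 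The predictable quadratic variation is $\sum_{\ell\le nt}\frac{\sigma^2}{n+1-\ell}(R_\ell\X^\la_{\ell-1})^{\otimes2}$; with $\ell\approx ns$ we have $\frac{1}{n+1-\ell}\sim\frac{1}{n(1-s)}$, and after averaging out the oscillatory factors $z^{\pm2\ell},z^{\pm4\ell}$ this converges to $\sigma^2\int_0^t\frac{1}{1-s}\,\overline{(R\,\X^\la(s))^{\otimes2}}\,ds$, which is exactly what the coefficient $\frac{\sigma\rho}{\sqrt{1-t}}$ in \eqref{LimitingTransferDecay} produces. The joint-in-$\la$ statement comes from the same computation applied to cross quadratic variations, using that $R_\ell$ carries no $\la$-dependence.

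With the characteristics identified I would invoke a martingale functional central limit theorem together with a convergence-of-stochastic-integrals criterion (the UT/UCV condition): the Lindeberg bound $\sum_{\ell\le nt}\Exp\abs{\eps_\ell}^3\to0$ holds because $\sum_{\ell\le nt}(n+1-\ell)^{-3/2}\lesssim(n(1-t))^{-1/2}\to0$ for fixed $t<1$ and the bounded third moment of $\omega_\ell$ controls the rest, while tightness follows from the a priori bound and the quadratic-variation estimate. The limit SDE \eqref{LimitingTransferDecay} is linear with coefficients that are bounded on every $[0,1-\delta]$, hence has a unique strong solution there, and letting $\delta\downarrow0$ gives the statement on $[0,1)$. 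The main obstacle, and the genuine reason the limit cannot be pushed to $t=1$, is that near $t=1$ — i.e.\ for $\ell$ close to $n$ — the potential values $\sigma\omega_\ell/\sqrt{n+1-\ell}$ are of order one rather than small, so the affine expansion, the a priori bound and the Lindeberg condition all degenerate ($\sum_{\ell\le n}(n+1-\ell)^{-3/2}\to\zeta(3/2)\neq0$). Making the two averaging steps quantitative, with the oscillations $z^{\pm2\ell}$ present and the noise strength growing like $(1-s)^{-1/2}$, is the technical core of the argument.
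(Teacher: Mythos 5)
Your proposal is correct and follows essentially the same route as the paper: the paper itself proves Theorem~\ref{thmdecay1} by noting that the argument for Theorem~\ref{DiffusionTransfer} carries over verbatim — identify the drift $\frac{\lambda}{\rho n}R_\ell$ and conditional variance $\frac{\sigma^2}{n+1-\ell}(R_\ell\cdot)^{\otimes2}$, average out the $z^{\pm 2\ell}$ oscillations so that only the diagonal part survives, and restrict to $[0,1-\eps]$ where the third-moment and tightness conditions remain uniform. The one cosmetic difference is that the paper outsources the final limit to its Proposition~\ref{convergence} (the discrete-Markov-chain-to-SDE result adapted from Ethier--Kurtz), which builds in its own stopping-time localization, so the separate a~priori bound via the method of Theorem~\ref{delocalization} and the explicit UT/UCV invocation that you describe are not needed; they are a valid but heavier-handed substitute for the same machinery.
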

\noindent From (\ref{LimitingTransferDecay}) and It\^o's formula it follows  that the phase function $\varphi^\lambda$ (which can be defined the same way as in \eqref{defphi}) will satisfy the following SDE on $[0,1)$:
\begin{equation}\label{SDEphaseDecay}
d\varphi^\la(t)={\la}dt+\frac{\sigma\rho}{\sqrt{1-t}}
\of{d\cB+\re\left[ e^{-i\varphi^\la(t)}d\cW \right]},\quad
\varphi^\la(0)=0.
\end{equation}
Note that since $\int_0^1\frac{1}{1-t} dt=\infty$, the process $\varphi^{\lambda}(t)$ does not have a limit as $t\to 1$.  However the \emph{relative} phase function $\alpha^{\lambda}=\varphi^{\lambda}-\varphi^0$ will converge and its limit will describe the point process limit of the spectrum.
\begin{thm}\label{limitPointProcessDecayingCase}
Let $\alpha^\la(t)$, $0\leq t\leq 1$, $\alpha^\la(0)=0$, be the solution to
\begin{equation}\label{SDErelphasedecay}
d\alpha^\la(t)={\la}dt+ \frac{\sigma\rho}{\sqrt{1-t}}
\of{\re\left[ (e^{-i\alpha^\la(t)}-1)d\cW \right]}.
\end{equation}
The function  $g(\la)=\frac1{2\pi}\lim_{t\to 1^{-}}
\alpha^\la(t)$ is integer valued and non-decreasing. If
$n\to \infty$ then the scaled eigenvalue process (see
\eqref{scaledHn}) converges to a point process $\La$ with
counting function $g$.
\end{thm}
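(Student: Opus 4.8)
The plan is to reduce the statement to Theorem \ref{thmdecay1} together with an analysis of the boundary behaviour of the phase as $t\to 1^-$. First I would verify that $\alpha^\la = \varphi^\la - \varphi^0$ indeed satisfies \eqref{SDErelphasedecay}: subtracting the two copies of \eqref{SDEphaseDecay} (for $\la$ and for $0$), the two $d\cB$-terms cancel exactly, and the real parts combine into $\re[(e^{-i\varphi^\la}-e^{-i\varphi^0})d\cW]$; after rotating the complex Brownian motion $d\cW$ by the deterministic-given-$\mathcal F_t$ phase $e^{i\varphi^0(t)}$ (which preserves the law of $\cW$ as a complex BM, by rotational invariance, provided one argues adaptedness carefully) one gets $\re[(e^{-i\alpha^\la}-1)d\widetilde\cW]$. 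The key point is that although $\varphi^0$ itself blows up as $t\to 1$, the \emph{difference} $\alpha^\la$ has a driving noise whose quadratic variation on $[t,1]$ is $(\sigma\rho)^2\int_t^1 \frac{|e^{-i\alpha^\la}-1|^2}{1-s}\,ds$, and this is finite a.s.\ because $|e^{-i\alpha^\la}-1|^2 = 2(1-\cos\alpha^\la)$ is bounded — so $\alpha^\la(t)$ is a time-changed Brownian motion (plus the bounded-variation drift $\la t$) run for a finite random time, hence $\lim_{t\to 1^-}\alpha^\la(t)$ exists a.s. This existence, plus coupling monotonicity in $\la$ (which I would get exactly as in Corollary \ref{limitPointProcess}: the family $\la\mapsto\varphi^\la(t)$ is increasing, so $\la\mapsto\alpha^\la(t)$ is increasing, and passing to the limit preserves monotonicity), gives that $g(\la)=\frac1{2\pi}\lim_{t\to1^-}\alpha^\la(t)$ is non-decreasing; integer-valuedness I would deduce from the eigenvalue/boundary condition bookkeeping, namely that $\mu$ is an eigenvalue iff $\varphi^\la(1)\in\varphi^0(1)+2\pi\Z$ in the limit, i.e.\ $\alpha^\la(1)\in 2\pi\Z$, combined with the fact that on the set where $g$ jumps it must jump by integers since $\alpha^\la$ is continuous in $\la$ for each $t<1$ and the limit is monotone (a monotone limit of continuous functions that takes values in $2\pi\Z$ at the eigenvalues forces integer jumps).

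For the convergence of the scaled eigenvalue process, the structure mirrors Corollary \ref{ConvergenceOfPoint}: $\mu = E + \la/(\rho n)$ is an eigenvalue of $H_n$ iff $(M_n^\la)_{11}=0$, which I rewrite using $\X_\ell^\la = T^{-\ell}(E)M_\ell^\la$ and the $Z$-diagonalization so that the eigenvalue condition becomes a statement about the discrete phase $\varphi_n^\la$ hitting a coset of $2\pi\Z$ determined by $z^{2n}$. The difference from the critical model is that here we cannot take the process all the way to $t=1$ uniformly: Theorem \ref{thmdecay1} only gives convergence on compacts of $[0,1)$. So the plan is a two-scale argument. On $[0,1-\delta]$, Theorem \ref{thmdecay1} (and the induced convergence of discrete relative phases to $\alpha^\la$) handles things. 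On the remaining window $[1-\delta,1]$ — equivalently the last $\delta n$ sites, where the potential strength $\sigma/\sqrt{n+1-k}$ is of constant order $\sim \sigma/\sqrt{\delta n}\cdot$(stuff), i.e.\ exactly the localized regime of a \emph{fixed}-variance Schr\"odinger operator — I need to show that the \emph{relative} phase increment $\alpha_n^\la(1) - \alpha_n^\la(1-\delta)$ is small uniformly in $n$ for small $\delta$, with high probability, uniformly for $\la$ in a compact set. The analogous smallness for the limiting SDE \eqref{SDErelphasedecay} is the quadratic-variation bound above: $\Exp$ of the $[1-\delta,1]$ quadratic variation is $O(\log(1/\delta))\cdot\Exp\!\sup|1-\cos\alpha^\la|$, and since the integrand is genuinely bounded (not just integrable) one in fact gets that the limiting relative phase increment over $[1-\delta,1]$ has variance $O(\delta^0)$ — no, more carefully, one uses that $1-\cos\alpha^\la$ is $O((\alpha^\la)^2)$ near the relevant values, so a Gronwall/localization argument shows the increment is $o_\delta(1)$. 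Combining the two scales with a standard $\eps$-$\delta$ tightness-plus-convergence-of-finite-dimensional-distributions argument (as in \cite{VV}) yields convergence of the eigenvalue counting functions, hence of the point processes.

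The main obstacle I expect is the uniform-in-$n$ control of the discrete relative phase over the final $\delta n$ sites — step analogous to passing the SDE to the boundary, but at the discrete level and in the strongly-coupled (localized) regime where Theorem \ref{thmdecay1} gives nothing. Here one cannot use the delocalization bound of Theorem \ref{delocalization} (whose constants blow up as one approaches the edge of the window). Instead I would work directly with the discrete Pr\"ufer-type phase: writing the one-step transfer map's action on the phase, the increment $\Delta\varphi_k$ is $O(\la/(\rho n)) + O(\sigma\omega_k/\sqrt{n+1-k})$ plus a bounded-in-$\omega_k$ nonlinear term, and the \emph{relative} increment $\Delta\alpha_k = \Delta\varphi_k^\la - \Delta\varphi_k^0$ picks up a factor $|e^{-i\alpha_k}-1| = O(|\alpha_k|)$ in front of the $\sigma\omega_k/\sqrt{n+1-k}$ noise term. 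Squaring and summing over $k\in[(1-\delta)n, n]$, the martingale part has conditional variance $\sum (\sigma^2/(n+1-k))\,O(\alpha_k^2) = O(\log(1/\delta))\,O(\sup\alpha_k^2)$, and the drift contributes $O(\delta)$ from $\la$ plus a lower-order It\^o-correction term; a discrete Gronwall inequality then bounds $\sup_{k}|\alpha_k^\la|$ over the window in terms of its value at $k=(1-\delta)n$ plus an error that is $O_{\delta\to 0}(1)$ uniformly in $n$ and in $\la$ on compacts, which is precisely what the two-scale argument needs. Making the dependence on $\la$ uniform — so that the limiting $g$ is genuinely the counting function of a point process and not just a marginal statement — requires running this coupled over finitely many $\la$'s and using the monotonicity, exactly as in the critical case.
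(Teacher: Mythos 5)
Your overall plan (control the SDE limit on $[0,1-\delta]$, then argue the discrete relative phase moves little on the final window, and combine with a counting/monotonicity argument) is the right shape and is close to the paper's, but there are two genuine gaps.

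\textbf{First gap: the existence of $\lim_{t\to 1^-}\alpha^\la(t)$ does not follow from a bounded integrand.} You write that the quadratic variation of $\alpha^\la$ on $[t,1]$ is $(\sigma\rho)^2\int_t^1 |e^{-i\alpha^\la}-1|^2(1-s)^{-1}\,ds$ and that this is finite because $|e^{-i\alpha^\la}-1|^2$ is bounded. That inference is false: a bounded integrand gives only a $\log(1/(1-t))$ bound, and $\int_t^1 (1-s)^{-1}ds=\infty$. Your own later self-correction (``$1-\cos\alpha^\la$ is $O((\alpha^\la)^2)$'') is also not quite the right point, since $\alpha^\la$ typically converges to a \emph{nonzero} multiple of $2\pi$; what decays is $\mathrm{dist}(\alpha^\la(t),2\pi\Z)$, and this decay itself is what one must establish. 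The paper does not attempt a direct proof here; it applies the time-change $t=1-e^{-\beta\tau/4}$ to pass to the $\textup{Sine}_\beta$ SDE \eqref{sineb}, for which convergence to a $2\pi\Z$-valued, nondecreasing limit is already proved in \cite{VV}. If you want a self-contained argument, you need the Gronwall-type estimate whose input is $\mathrm{dist}(\alpha^\la,2\pi\Z)$ and not a crude boundedness bound; this is exactly the role of the estimate \eqref{alphabnd} (Step \ref{osc}) in the paper's proof.

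\textbf{Second gap: you have no mechanism for the boundary condition to become uniform, and your eigenvalue criterion is circular.} You say ``$\mu$ is an eigenvalue iff $\alpha^\la(1)\in 2\pi\Z$,'' but since $g(\la)=\frac{1}{2\pi}\alpha^\la(1^-)$ is integer-valued, $\alpha^\la(1^-)\in 2\pi\Z$ holds for \emph{every} $\la$; it cannot select eigenvalues. The eigenvalues are the jump points of $g$, and the real issue is why the scaled spectrum of $H_n$ converges to a point process whose counting function is $g$. The finite-$n$ eigenvalue condition is $e^{i\varphi^\la_n}=-z^{2n+2}$, and the count of eigenvalues in $[\la_1,\la_2]$ is by oscillation theory the number of lattice points of $2\pi\Z$ crossed by the phase; but $\varphi^0_n$ does not converge, so one cannot pass to the limit naively at $\ell=n$. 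The paper circumvents this with the backward phase function $\tph^\la_k$ started from the boundary condition, giving the exact finite-$n$ identity \eqref{evcount} at an intermediate site $n-k$, and then showing (Step 3) that $\varphi^0_{n-k}$ becomes uniform mod $2\pi$ while $\tph^\la_k-\tph^0_k\to 0$; this is precisely what turns the length $\alpha^\la_{n-k}$ of the interval into the number of lattice points it contains, i.e.\ into $g(\la)$. Your proposal has neither the intermediate-site counting identity nor the uniformization of the endpoint, so even granting your Gronwall control of the last window, the argument does not close. (Incidentally this uniformization is what makes the decaying-model limit translation invariant and identifies it with $\textup{Sine}_\beta$; it is not an optional refinement.)

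The remaining parts of your proposal — verifying that $\alpha^\la=\varphi^\la-\varphi^0$ satisfies \eqref{SDErelphasedecay} by rotating $\cW$, deducing monotonicity of $g$ from monotonicity of $\varphi^\la$ in $\la$, and the reduction to Theorem \ref{thmdecay1} on $[0,1-\delta]$ — are fine and match the paper's Step 1.
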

Applying the time change $t=1-e^{-\beta \tau/4}$ for the SDE (\ref{SDErelphasedecay})  we get
\begin{equation}\label{sineb}
d \alpha^{ \lambda}=\lambda \frac{\beta}{4} e^{-\frac{\beta}{4}t} dt+\re\left[(e^{-i \alpha^{ \lambda}}-1) (d\cB_1+i d\cB_2) \right], \qquad  \alpha^{ \la}(0)=0, \quad t\in[0,\infty),
\end{equation}
where $\cB_1, \cB_2$ are independent standard Brownian motions
 and
$\beta=\frac{2}{(\sigma\rho)^2}$; this is precisely the SDE
that describes the $\textup{Sine}_\beta$ process given in
\cite{VV}.

\begin{cor}
The point process $\Lambda$ agrees with the point process
$\textup{Sine}_\beta$, the bulk limit of the beta Hermite
ensembles of random matrix theory with
$\beta=\frac{2}{(\sigma\rho)^2}$.
\end{cor}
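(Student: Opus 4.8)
The plan is to obtain the corollary by combining Theorem~\ref{limitPointProcessDecayingCase} with the deterministic time change recorded just above it and with the definition of $\textup{Sine}_\beta$ from \cite{VV}. By Theorem~\ref{limitPointProcessDecayingCase}, the limiting point process $\Lambda$ is the point process whose counting function is $g(\la)=\frac1{2\pi}\lim_{t\to1^-}\alpha^\la(t)$, where $(\alpha^\la)_{\la\in\Real}$ is the coupled solution of the relative-phase SDE \eqref{SDErelphasedecay}, driven by a single complex Brownian motion. So it suffices to show that this family, together with the value of its limit as $t\to1^-$, has the same joint law (across all $\la$) as the corresponding objects defining $\textup{Sine}_\beta$ for $\beta=\frac{2}{(\sigma\rho)^2}$.

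The first thing I would do is carry out the time change $t=1-e^{-\beta s/4}$ on \eqref{SDErelphasedecay}. This map is a deterministic increasing bijection of $s\in[0,\infty)$ onto $t\in[0,1)$ with $\frac{dt}{1-t}=\frac\beta4\,ds$, so under the time-scaling property of Brownian motion (applied to the continuous martingale part, which is legitimate precisely because the change of clock is deterministic and monotone) the coefficient $\frac{\sigma\rho}{\sqrt{1-t}}$ in front of $d\cW$ is exactly absorbed and the drift $\la\,dt$ becomes $\la\frac\beta4 e^{-\beta s/4}\,ds$; with $\beta=\frac{2}{(\sigma\rho)^2}$ the resulting equation for $\widetilde\alpha^\la(s):=\alpha^\la(1-e^{-\beta s/4})$ is precisely \eqref{sineb}, with $\widetilde\alpha^\la(0)=0$ and the same coupling across $\la$. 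Since the time change sends $s\to\infty$ to $t\to1^-$, we have $\lim_{t\to1^-}\alpha^\la(t)=\lim_{s\to\infty}\widetilde\alpha^\la(s)$. By \cite{VV}, equation \eqref{sineb} is exactly the SDE characterizing the $\textup{Sine}_\beta$ relative-phase family, this limit exists almost surely and lies in $2\pi\Z$, and $\la\mapsto\frac1{2\pi}\lim_{s\to\infty}\widetilde\alpha^\la(s)$ is the counting function of $\textup{Sine}_\beta$. Hence $g$ agrees in law, jointly in $\la$, with the counting function of $\textup{Sine}_\beta$, which gives $\Lambda\eqd\textup{Sine}_\beta$.

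The main thing to be careful about is the time-change step: one must verify that the \emph{single} driving noise of \eqref{SDErelphasedecay} is transformed in a way that preserves the coupling over all $\la$ simultaneously — so that the full point process (not merely one-dimensional marginals of $g$) is identified — and that the monotonicity of $\la\mapsto\alpha^\la$ used in Theorem~\ref{limitPointProcessDecayingCase} to make $g$ a bona fide counting function is inherited by $\widetilde\alpha^\la$; both are automatic because the clock change is deterministic. The remaining points — matching the multiplicative constants, confirming the boundary convention $\alpha^0\equiv0$ against the one used in \cite{VV}, and quoting from \cite{VV} the existence and integrality of $\lim_{s\to\infty}\widetilde\alpha^\la(s)$ — are routine bookkeeping.
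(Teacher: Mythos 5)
Your argument is exactly the paper's own: the paper treats this corollary as immediate from the remark immediately preceding it, where the deterministic time change $t=1-e^{-\beta s/4}$ is applied to \eqref{SDErelphasedecay} to arrive at \eqref{sineb}, the SDE characterizing $\textup{Sine}_\beta$ in \cite{VV}, and then one appeals to Theorem~\ref{limitPointProcessDecayingCase} to identify $\Lambda$ with the process whose counting function is $\frac{1}{2\pi}\lim_{t\to 1^-}\alpha^\lambda(t)$. You make this explicit, and you are right to single out that the clock change is deterministic and monotone, so the single driving noise (hence the coupling across $\la$), the monotonicity of $\la\mapsto\alpha^\la$, and the limit as $t\to1^-$ versus $s\to\infty$ all carry over verbatim — precisely what is needed to identify the full point process rather than just marginals of the counting function.
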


 The Hermite $\beta$-ensemble is a finite ensemble with joint density
\[
Z_{\beta,N}^{-1} \prod_{i<j} |\lambda_i-\lambda_j|^\beta e^{-\frac{\beta}{4} \sum_{i} \lambda_i^2},
\]
this suggests that the eigenvalue repulsion is of the order
of $\eps^{2+\beta}$ (in the sense of Theorem
\ref{repulsion}), and this can be proved using
\eqref{SDErelphasedecay}. In \cite{VV} it was proved that
$\textup{Sine}_\beta$ is translation invariant with density
$(2\pi)^{-1}$ this provides the analogue of Theorem
\ref{intensity}. The asymptotic probability of large gaps
was identified in \cite{VV2}. As $\la\to \infty$ we have
\begin{equation}\label{sinebgap}
\Prob \of{\textup{Sine}_\beta[0,\la]=0}=(\chi_\beta+o(1))\lambda^{\gamma_\beta}\,\exp\left(-
\frac{\beta}{64}\lambda^2+\left(\frac{\beta}{8}-\frac14\right)\lambda\right)
\end{equation}
with $\gamma_\beta=\frac{1}{4}\left(\frac\beta{2}+\frac{2}{\beta}-
3\right)$ and $0<\chi_\beta<\infty$.

We will also prove the analogue of Theorem \ref{thmclt}.
\begin{thm}\label{cltsineb}
As $\lambda\to \infty$ we have
\[
\frac1{\sqrt{\log \lambda}} \of{\textup{Sine}_\beta[0,\la]-\frac{\lambda}{2\pi}} \Rightarrow \cN(0,\frac{2}{\beta \pi^2}).
\]
\end{thm}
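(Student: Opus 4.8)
The plan is to prove the CLT for $\textup{Sine}_\beta[0,\lambda]$ by reducing it, via the coupling with the phase function, to a statement about the fluctuations of the relative phase $\alpha^\lambda(1^-)$, and then analyzing that quantity through its SDE. The first step is to recall from Theorem~\ref{limitPointProcessDecayingCase} that $\textup{Sine}_\beta[0,\lambda] = g(\lambda) = \frac{1}{2\pi}\lim_{t\to 1^-}\alpha^\lambda(t)$, so the problem becomes showing that $\frac{1}{\sqrt{\log\lambda}}\big(\alpha^\lambda(1^-) - \lambda\big)$ converges to a centered Gaussian with variance $\frac{2}{\beta\pi^2}\cdot(2\pi)^2 = \frac{8}{\beta}$. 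Equivalently, after the time change $t = 1 - e^{-\beta s/4}$ leading to \eqref{sineb}, I would study the long-time behavior of $\alpha^\lambda(s)$ as $s\to\infty$, where the drift term is $\lambda\frac\beta4 e^{-\beta s/4}\,ds$ and the noise is $\re[(e^{-i\alpha^\lambda}-1)(d\cB_1 + i\,d\cB_2)]$. The total accumulated drift is $\lambda(1 - e^{-\beta s/4})\to\lambda$, so one expects $\alpha^\lambda(s)\approx \lambda + (\text{noise contribution})$, and the key is that the noise contributes a variance growing like $\log\lambda$.

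The second step, which I expect to be the main obstacle, is to quantify the noise accumulation. Write $\alpha^\lambda(s) = \lambda(1-e^{-\beta s/4}) + N^\lambda(s)$ where $N^\lambda$ collects the martingale part plus corrections; the quadratic variation of the martingale part is $\int_0^s |e^{-i\alpha^\lambda(u)}-1|^2\,du = \int_0^s 2(1-\cos\alpha^\lambda(u))\,du$. Heuristically, once $\lambda$ is large, $\alpha^\lambda(u)$ winds rapidly (its drift rate near $u=0$ is of order $\lambda$), so $\cos\alpha^\lambda(u)$ oscillates and averages to zero, giving quadratic variation $\approx s$ — but this must be cut off: the drift rate $\lambda\frac\beta4 e^{-\beta u/4}$ drops below order $1$ once $e^{-\beta u/4}\lesssim 1/\lambda$, i.e. $u\gtrsim \frac4\beta\log\lambda$, after which the process no longer winds and $\cos\alpha^\lambda$ stops averaging out. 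Thus the effective time window contributing diffusively has length $\asymp \frac4\beta\log\lambda$, producing asymptotic variance $\asymp \frac{4}{\beta}\log\lambda$ for the real Brownian part and, accounting for the complex structure (the factor $e^{-i\alpha}-1$ pairs the two independent Brownian motions), a total of $\frac{8}{\beta}\log\lambda$, matching the claimed $\frac{2}{\beta\pi^2}(\log\lambda)$ for $\textup{Sine}_\beta[0,\lambda]$. Making this rigorous requires an averaging/oscillatory-integral estimate: showing $\frac{1}{\log\lambda}\int_0^{\infty} 2(1-\cos\alpha^\lambda(u))\,du \to \frac{4}{\beta}$ in probability, for which I would compare $\alpha^\lambda$ to its leading drift on the fast-winding interval, bound the error, and use a law-of-large-numbers argument for the oscillatory integral (e.g. via a second-moment computation exploiting decorrelation of $\cos\alpha^\lambda(u)$ and $\cos\alpha^\lambda(v)$ for $|u-v|$ not too small).

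The third step is to upgrade from the variance computation to a genuine CLT. Having identified that the martingale $M^\lambda(s)$ driving $N^\lambda$ has $\langle M^\lambda\rangle_\infty / \log\lambda \to \frac{4}{\beta}$ in probability, I would apply the martingale central limit theorem (in the time-changed form: a continuous martingale is a Brownian motion run at its quadratic-variation clock) to conclude $M^\lambda(\infty)/\sqrt{\log\lambda}\Rightarrow \cN(0, \frac8\beta)$, using that the complex structure contributes the two independent Brownian motions symmetrically. One must also control the non-martingale corrections (arising from the $-1$ in $e^{-i\alpha}-1$ and any Itô drift terms) and show they are $o(\sqrt{\log\lambda})$, which should follow from the same oscillatory cancellation since those terms are themselves of the form $\int \cos\alpha^\lambda$ or $\int\sin\alpha^\lambda$. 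Finally, translating $\alpha^\lambda(1^-)$ back to the counting function via $\textup{Sine}_\beta[0,\lambda] = \lfloor \alpha^\lambda(1^-)/(2\pi)\rfloor$ (up to a boundary term that is $O(1)$ and hence negligible after dividing by $\sqrt{\log\lambda}$), Slutsky's theorem gives $\frac{1}{\sqrt{\log\lambda}}(\textup{Sine}_\beta[0,\lambda] - \frac{\lambda}{2\pi})\Rightarrow \cN(0,\frac{1}{(2\pi)^2}\cdot\frac8\beta) = \cN(0,\frac{2}{\beta\pi^2})$, as claimed.
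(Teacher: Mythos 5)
Your architecture is the same as the paper's: pass to the time-changed SDE \eqref{sineb}, decompose $\alpha^\lambda$ into accumulated drift plus a martingale $M^\lambda$, cut the time axis at $T\asymp\frac{4}{\beta}\log\lambda$, show that $\langle M^\lambda\rangle_T/\log\lambda$ converges in probability to a constant, and conclude via the martingale central limit theorem and Slutsky. One bookkeeping slip: since $\langle M^\lambda\rangle_s = \int_0^s 2(1-\cos\alpha^\lambda)\,du$, averaging $1-\cos\alpha^\lambda$ to $1$ over a window of length $T$ gives $\langle M^\lambda\rangle_T/\log\lambda\to\frac{8}{\beta}$, not $\frac{4}{\beta}$ as you write mid-paragraph, and the split you offer (``$\frac{4}{\beta}$ for the real part plus $\frac{4}{\beta}$ for the imaginary part'') is not the actual decomposition: $(\cos\alpha-1)^2$ averages to $\tfrac32$ and $\sin^2\alpha$ to $\tfrac12$. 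This does not propagate to your final answer, since in step three you correctly invoke $\cN(0,\tfrac{8}{\beta})$, but the intermediate claim should be corrected.

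Where you genuinely diverge from the paper is in proving the averaging lemma $\frac{1}{\log\lambda}\int_0^T\cos\alpha^\lambda\,du\to0$ in probability. You propose a second-moment decorrelation estimate; the paper instead uses an It\^o identity, writing $e^{i\alpha^\lambda}\,dt$ in terms of $d\bigl(e^{i\alpha^\lambda+\beta t/4}\bigr)$ and lower-order terms, integrating over $[0,T]$, and showing each piece is $O((\log\lambda)^{-1})$, either almost surely or in $L^2$. The It\^o route is shorter and automatically handles the zone $u\approx T$ where the drift has slowed to $O(1)$ and rapid winding breaks down; a decorrelation argument would have to separately discard a window of bounded length there (negligible after dividing by $\log\lambda$, but an extra step to justify). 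Likewise, for the tail $[T,\infty)$ you offer only the heuristic that the process ``no longer winds''; the paper's argument is sharper and exact: taking $T=\frac{4}{\beta}\log(\beta\lambda/4)$, the shifted process $\alpha^\lambda(T+\cdot)$ solves the same SDE with a $\lambda$-independent parameter, so the tail increment $\alpha^\lambda(\infty)-\alpha^\lambda(T)$ is tight in $\lambda$ and vanishes after dividing by $\sqrt{\log\lambda}$. I would adopt that time-shift observation in place of the heuristic.
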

An $n\to\infty$ version of this theorem for finite matrices
from circular and Jacobi $\beta$ ensembles was shown by \cite{K}.



%

Section
\ref{s:properties} contains the proofs about the various
properties of the limiting point processes. Section
 \ref{s:carousel} will describe some connections to
the Brownian carousel introduced in \cite{VV}
and prove the theorem about the carousel representation of the limiting point process.
In Sections
\ref{s:SDEconvergence} and \ref{s:pointprocess} we will
provide the proofs for the scaling limit of the spectrum
for the first model (with the constant variance) together with the delocalization of eigenvectors.
Section \ref{s:decaying} will deal with the proof in the
case of the second model (with the decaying potential).
Finally, the Appendix (Section \ref{s:appendix}) contains the proof for the existence of unique analytic solutions for the discussed SDEs and a technical proposition about the convergence of discrete time Markov chains to stochastic differential equations.

\section{Analysis of the limiting point process}\label{s:properties}

In this section we will provide the proofs for the theorems
related to various properties of our limiting point
processes.

We will first show a translation invariance property for
the phase function $\varphi$.

\begin{lem}[Invariance]\label{invariance}
For every $\theta\in \mathbb R$ we have
$$\varphi^{\la-\theta}(t)+\theta t\eqd \varphi^{\la}(t)$$ as
functions of $\lambda, t$.
\end{lem}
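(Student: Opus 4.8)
The plan is to exploit the explicit form of the phase SDE (\ref{SDEphase}) together with the rotational invariance of complex Brownian motion. Fix $\theta\in\Real$ and set $\psi^\la(t):=\varphi^{\la-\theta}(t)+\theta t$, so that $\psi^\la(0)=0$ for every $\la$. Using (\ref{SDEphase}) at the parameter value $\la-\theta$ and adding the deterministic drift $\theta\,dt$ gives
\[
d\psi^\la(t)=\big((\la-\theta)+\theta\big)\,dt+d\cB+\re\!\left[e^{-i\varphi^{\la-\theta}(t)}\,d\cW\right]
=\la\,dt+d\cB+\re\!\left[e^{-i\psi^\la(t)}\,e^{i\theta t}\,d\cW\right],
\]
where in the last step we used $e^{-i\varphi^{\la-\theta}(t)}=e^{-i\psi^\la(t)}e^{i\theta t}$. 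Hence, writing $\widetilde\cW(t):=\int_0^t e^{i\theta s}\,d\cW(s)$, the family $(\psi^\la)_{\la\in\Real}$ is a solution of the coupled system (\ref{SDEphase}) driven by $(\cB,\widetilde\cW)$ in place of $(\cB,\cW)$ and with the same initial conditions.

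Next I would check that $(\cB,\widetilde\cW)\eqd(\cB,\cW)$, i.e.\ that $\widetilde\cW$ is again a standard complex Brownian motion independent of $\cB$. Since $t\mapsto e^{i\theta t}$ is deterministic of modulus one, $\widetilde\cW$ is a continuous local martingale with $d\widetilde\cW\,d\overline{\widetilde\cW}=e^{i\theta t}e^{-i\theta t}\,dt=dt$, $d\widetilde\cW\,d\widetilde\cW=e^{2i\theta t}\,d\cW\,d\cW=0$ and $d\cB\,d\widetilde\cW=e^{i\theta t}\,d\cB\,d\cW=0$; by L\'evy's characterization $(\re\widetilde\cW,\im\widetilde\cW,\cB)$ is a three-dimensional standard Brownian motion, equivalently $(\cB,\widetilde\cW)$ is a standard real/complex Brownian pair with the same law as $(\cB,\cW)$. (Concretely, $(\re\widetilde\cW,\im\widetilde\cW)$ is obtained from $(\re\cW,\im\cW)$ by the deterministic time-dependent rotation by angle $\theta t$, which preserves the law of planar Brownian motion.) Since the coupled system (\ref{SDEphase}) has a unique strong solution --- this is part of Corollary \ref{limitPointProcess} --- the solution is a fixed measurable functional of the driving noise, so $(\psi^\la)_{\la,t}\eqd(\varphi^\la)_{\la,t}$; this is precisely the asserted identity $\varphi^{\la-\theta}(t)+\theta t\eqd\varphi^\la(t)$ as functions of $(\la,t)$.

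The argument is a change of variables, so there is no serious obstacle; the one point that deserves care is that the identity is claimed for the whole $\la$-indexed family jointly and not merely for a single $\la$. This causes no difficulty because the substitution $\cW\mapsto\widetilde\cW$ does not involve $\la$, so the shifted processes $\psi^\la$ still satisfy the \emph{same} coupled SDE for all $\la$ simultaneously, and one may invoke pathwise uniqueness for the coupled system from Corollary \ref{limitPointProcess}. (For a single fixed $\la$ the statement is even more transparent: by Remark \ref{rem5}, $\varphi^\la$ is Brownian motion with variance $3/2$ and drift $\la$, so adding $\theta t$ to $\varphi^{\la-\theta}$ simply restores the drift to $\la$.)
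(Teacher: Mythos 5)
Your proof is correct and is essentially the same argument as the paper's: both define $\psi^\la=\varphi^{\la-\theta}+\theta t$, absorb the extra phase $e^{i\theta t}$ into a rotated driving noise $\widetilde\cW(t)=\int_0^t e^{i\theta s}\,d\cW$, observe that $\widetilde\cW$ is again a standard complex Brownian motion, and conclude by strong uniqueness of the coupled system. The only difference is cosmetic: you spell out the L\'evy-characterization check and explicitly note that the substitution is $\la$-independent, both of which the paper leaves implicit.
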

\begin{proof}
From (\ref{SDEphase}) it is clear that $\tilde
\varphi^\la(t)=\varphi^{\la-\theta}(t)+\theta t$ satisfies
the following one parameter family of SDEs:
\[
d\tilde \varphi^\la(t)=\la dt+ d\cB+\re\left[
e^{-i\varphi^{\la-\theta}(t)}d\cW \right],\quad \tilde
\varphi^\la(0)=0.
\]
Since $\varphi^{\la-\theta}(t)=\tilde \varphi^{\la}
(t)-\theta t$ we have $$\re\left[
e^{-i\varphi^{\la-\theta}(t)}d\cW \right]=\re\left[
e^{-i\tilde \varphi^\la(t)}d \widetilde\cW \right] ,\qquad
\mbox{where }  \widetilde \cW(t)=\int_0^t e^{i \theta s}
d\cW$$ is also a standard complex Brownian motion. Thus
$\tilde \varphi^\la$ satisfies the same SDE system as
$\varphi^\la$ with a different driving Brownian motion. The
uniqueness of solutions shown in the Appendix implies that
they indeed have the same distribution.
\end{proof}

In order to study the point process $\sch$ we will use
Corollary \ref{limitPointProcess}.
 Note that for $\lambda\in \Real$ the function
$\alpha^{\lambda}(t)=\varphi^{\lambda}(t)-\varphi^{0}(t)$
(which we will call \emph{relative phase function})
satisfies the following SDE system
\begin{equation}\label{SDErelphase}
d\alpha^\la(t)=\la dt+ \re\left[
\of{e^{-i\alpha^\la(t)}-1}d\cZ \right],\quad
\alpha^\la(0)=0,
\end{equation}
where $\cZ$ is a standard complex Brownian motion with
$d\cZ=e^{-i \varphi^0} d\cW$. For any fixed $\lambda\in
\Real$ this can be rewritten as
\begin{equation}\label{sineSDE}
d\alpha^\la=\la dt+ \sqrt{2}  \sin(\alpha^{\lambda}/2)
d\cB^\lambda,\quad \alpha^\la(0)=0,
\end{equation}
where $\cB^{\lambda}$ is a standard Brownian motion with $d\cB^{\lambda}=-\sqrt{2} \re\left[e^{-i \alpha^\la/2} d\cZ\right]$.

The quantity $\frac1{2\pi} \alpha^\la(\tau)$ gives a good
approximation for the number of points in $[0,\lambda]$.
Indeed, by Corollary \ref{limitPointProcess} we have
\begin{equation}\left|\frac1{2\pi}
\alpha^{\lambda/\tau}(\tau)-\sch[0,\lambda]\right|\le
1.\label{e:compare}\end{equation}

\begin{proof}[Proof of Theorem \ref{repulsion}
(Eigenvalue repulsion)] We will give two proofs of this
theorem. The first one uses the SDE representation of
Corollary \ref{limitPointProcess}. A second proof,  at the
end of Section \ref{s:carousel}, uses a geometric approach
via the Brownian carousel.


For $\mu=0$ by \eqref{e:compare} we have
\begin{equation}
\Prob(\sch [0,\eps]\geq 2)\; \leq\; \Prob
(\alpha^{\eps/\tau}(\tau)\geq 2\pi).\label{e:gapbound}
\end{equation}
and the same holds for other $\mu$, with
$\alpha^{\lambda/\tau}$ replaced by
$\varphi^{(\lambda+\mu)/\tau} -\varphi^{\lambda/\tau}$,
which satisfies the same SDE. Since this SDE is the only
thing we use we can restrict our attention to $\mu=0$.

Introduce the new process $Y=\log(\tan(\alpha/4))$ which is
well defined for $\alpha\in(0,2\pi)$. By \eqref{sineSDE}
and It\^o's formula the process  $Y$ satisfies the SDE
\begin{equation}\label{SDEY}
dY=\frac{\eps/\tau}2\cosh(Y)dt+\frac14
\tanh(Y)dt+\frac{1}{\sqrt{2}} \,dB
\end{equation}
with initial condition $Y(0)=-\infty$. It is clear that
$\set{\alpha^\eps(\tau)\geq 2\pi}=\set{Y \textup{ explodes
on $[0,1]$}}$. Consider now the solution $\tilde Y$ of the
SDE (\ref{SDEY}) with initial condition $\tilde Y(0)=0$. By
monotonicity we have
\[
Y \textup{ explodes on $[0,\tau]$}\Rightarrow \tilde Y
\textup{ explodes on $[0,\tau]$} \Rightarrow
\sup_{t\in[0,\tau]}|\tilde Y(t)|\ge \log (\tau/\eps).
\]
For  $\eps/\tau\le 1$, the inequality $|y|\le \log
(\tau/\eps)$ implies
\begin{equation*}
|{\eps/\tau}\cosh(y)/2+  \tanh(y)/4|\le 1.
\end{equation*}
This  means that for any $s\ge 0$ we have
$$\sup_{t\in[0,s]}|\tilde Y(t)|\le
\log(\tau/\eps) \quad \Rightarrow \quad \sup_{t\in
[0,s]}|\tilde Y(t)-\frac{1}{\sqrt{2}} B(t)|\le s.$$ Let $T$
be the first hitting time of $\log(\tau/ \eps)$ by $\tilde
Y$. Then by the previous argument we have
$\frac{1}{\sqrt{2}} |B(T)|\ge\log(\tau/ \eps)-T $ which
leads to
\begin{align*}
\Prob(\sup_{t\in[0,\tau]}|\tilde Y(t)|\ge \log
(\tau/\eps))&
\le \Prob(\frac{1}{\sqrt{2}}\sup_{t\in[0,\tau]}| B(t)|\ge \log(\tau/\eps)-\tau)\\
&\le 4\exp\left(-(\log(\tau/ \eps)-\tau)^2/\tau \right).
\end{align*}
Here the last inequality follows from Brownian scaling and
\[\Prob(\sup_{t\in[0,1]}| B(t)|\ge x)\le2 \Prob(\sup_{t\in[0,1]} B(t)\ge x)=4 \Prob(B(1)\ge x)\le 4e^{-x^2/2}.\qedhere\]
\end{proof}

For the proof of Theorem \ref{intensity} we need the
following estimate.
\begin{lem}\label{l:secondderivative} We have
$$\ev \left(\partial_\lambda^2 \varphi^\lambda(t)\right)^2
= f(t)<\infty.$$
\end{lem}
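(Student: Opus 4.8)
The plan is to differentiate the SDE \eqref{SDEphase} in $\lambda$ twice, obtain linear SDEs for the derivative processes $u^\lambda(t) := \partial_\lambda \varphi^\lambda(t)$ and $w^\lambda(t) := \partial_\lambda^2 \varphi^\lambda(t)$, and then read off moment bounds by writing these linear SDEs in integrated (variation-of-parameters) form. Since the existence, uniqueness, and $\lambda$-analyticity of $\varphi^\lambda$ are guaranteed by Corollary \ref{limitPointProcess} (and the Appendix), differentiation under the stochastic integral is justified; I would cite that and not belabor it. First I would differentiate once: from $d\varphi^\lambda = \lambda\,dt + d\cB + \re[e^{-i\varphi^\lambda}d\cW]$ one gets
\[
du^\lambda = dt + \re\bigl[-i u^\lambda e^{-i\varphi^\lambda}\,d\cW\bigr], \qquad u^\lambda(0)=0.
\]
Writing $d\cW = d\cB_2' + i\,d\cB_3'$ with the rotation $e^{-i\varphi^\lambda}$ absorbed into a (time-changed, but still standard) pair of real Brownian motions, this is a linear SDE whose diffusion coefficient is bounded by $|u^\lambda|$ in absolute value; by It\^o's formula $d\,(u^\lambda)^2 = 2u^\lambda\,dt + (\text{mart.}) + (u^\lambda)^2\,dt$ (the $dt$ term from the quadratic variation of $\re[-iu e^{-i\varphi}d\cW]$ equals $(u^\lambda)^2\,dt$ since $\cW$ contributes total variance $1$ per unit time as in Remark \ref{rem5}), so $\frac{d}{dt}\ev (u^\lambda(t))^2 \le C(1 + \ev(u^\lambda(t))^2)$ and Gr\"onwall gives $\ev(u^\lambda(t))^2 \le e^{Ct} - 1 =: g(t) < \infty$ for all $t$, uniformly on compacts.

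Next I would differentiate a second time. Setting $w^\lambda = \partial_\lambda u^\lambda$, the chain rule applied to $\re[-iu^\lambda e^{-i\varphi^\lambda}d\cW]$ yields
\[
dw^\lambda = \re\bigl[\bigl(-i w^\lambda - (u^\lambda)^2\bigr)e^{-i\varphi^\lambda}\,d\cW\bigr], \qquad w^\lambda(0)=0,
\]
i.e.\ a linear SDE in $w^\lambda$ with an inhomogeneous forcing driven by $(u^\lambda)^2$. Again by It\^o,
\[
d\,(w^\lambda)^2 = 2\,w^\lambda\,\re\bigl[(-iw^\lambda-(u^\lambda)^2)e^{-i\varphi^\lambda}d\cW\bigr] + \bigl|{-iw^\lambda}-(u^\lambda)^2\bigr|^2\,dt,
\]
and after taking expectations the martingale term drops and $|{-iw}-u^2|^2 = w^2 + u^4$ (the cross term $2\,\re[\,\overline{(-iw)}\,(-u^2)\,] = 2\re[\,iw\,]\cdot(-u^2)$ — wait, $w$ is real, so this cross term vanishes; in any case it is bounded by $2|w|u^2 \le w^2 + u^4$), giving
\[
\frac{d}{dt}\,\ev (w^\lambda(t))^2 \;\le\; C\,\ev (w^\lambda(t))^2 \;+\; C\,\ev (u^\lambda(t))^4.
\]
So I need a fourth-moment bound on $u^\lambda$. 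That is obtained the same way: $d(u^\lambda)^4 = 4(u^\lambda)^3\,dt + (\text{mart.}) + 6(u^\lambda)^4\,dt$, whence $\ev(u^\lambda(t))^4$ is dominated by the solution of a linear ODE and is finite, uniformly on compacts. Feeding this into the previous display and applying Gr\"onwall once more yields $\ev(w^\lambda(t))^2 \le f(t) < \infty$, which is exactly the claim.

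The only genuine subtlety — and the step I would flag as the main obstacle — is justifying that $u^\lambda$ and $w^\lambda$ are honest strong solutions of the differentiated SDEs (equivalently, that $\partial_\lambda$ commutes with the stochastic integral) and that the relevant local martingales are true martingales so the expectation of the martingale term really vanishes. The first point is handled by the analyticity of $\lambda\mapsto\varphi^\lambda(t)$ from Corollary \ref{limitPointProcess}: one can differentiate the Cauchy integral representation, or equivalently appeal to the standard theory of derivatives of SDE flows with respect to a drift parameter (the coefficients here are smooth and the SDE is, after the rotation trick, one-dimensional with Lipschitz-type control). The second point is handled by a localization/stopping-time argument: run the estimates up to $T_N = \inf\{t : |u^\lambda(t)| + |w^\lambda(t)| \ge N\}$, where the stopped integrands are bounded and the martingale term is a genuine martingale, derive the Gr\"onwall bound with $t\wedge T_N$ in place of $t$, then let $N\to\infty$ using Fatou on the left and monotone convergence on the right. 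Everything else is the routine It\^o bookkeeping sketched above.
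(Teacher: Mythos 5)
Your proposal is correct and follows essentially the same route as the paper: differentiate \eqref{SDEphase} twice in $\lambda$ (justified by Theorem \ref{t:potter}), obtain the linear SDEs for $\varphi'$ and $\varphi''$, use It\^o and Gr\"onwall to bound $\Exp\varphi'(t)^2$ and $\Exp\varphi'(t)^4$, then feed the fourth-moment bound into the It\^o/Gr\"onwall estimate for $\Exp\varphi''(t)^2$. Your quadratic-variation bookkeeping is off by harmless constant factors (the paper's $\cW$ has $\Exp|\cW(t)|^2=t$, so $\re[e^{-i\varphi}d\cW]$ contributes variance $\tfrac12\,dt$, not $dt$), but this does not affect the argument.
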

\begin{proof}
Differentiating (\ref{SDEphase}) twice in $\lambda$ is
justified by Theorem \ref{t:potter} in the Appendix. We get that for a
fixed $\lambda$, with primes denoting $\lambda$-derivatives
\begin{equation}\label{sde2}
d\varphi'=dt+ \varphi' d\cB_1, \quad d\varphi''= \varphi''
d\cB_1- \varphi'^2 d\cB_2, \quad \varphi'(0)=\varphi''(0)=0
\end{equation}
where $\cB_1, \cB_2$ are independent real Brownian motions
with variance 1/2. This shows that the distribution of the
first and second derivatives does not depend on $\lambda$,
as we already know from the invariance Lemma
\ref{invariance}.

From the first SDE we get $\Exp \varphi'(t)= t$. Applying
It\^o's lemma for $\varphi'^2$ and $\varphi'^4$ and then
using Gronwall's inequality gives that $\Exp \varphi'(t)^2$
and $\Exp \varphi'(t)^4$ are bounded as functions of $t$
only. It\^o's lemma applied to $\varphi''^2$ gives
\[
\Exp \varphi''(t)^2=\int_0^t \Exp (\varphi''(s)^2+
\varphi'(s)^4) ds.
\]
Gronwall's inequality and the fact that $\Exp
\varphi'^4(s)$ is bounded leads to the desired bound.
\end{proof}

\begin{proof}[Proof of Theorem \ref{intensity} (Intensity of the point process)]
We will evaluate
\begin{equation}\label{intensity1}
\tau^{-1}\lim_{\eps\to 0} \eps^{-1}
\Exp\#\set{[\varphi^{\mu}(\tau),\varphi^{\mu+\eps}(\tau))]\cap\of{2\pi
\Z}}= \lim_{\eps\to 0} (\tau\eps)^{-1} \Exp\left[\sch[\tau
\mu,\tau(\mu+\eps)] \right]
\end{equation}
By Corollary \ref{limitPointProcess} the function
$\varphi^{\lambda}(t)$ is analytic in $\lambda$.  We will
use the notation $\varpi^\lambda(t)={\partial_\lambda}
\varphi^{\lambda}(t)$ for the derivative. We will first
evaluate the limit in (\ref{intensity1}) by switching the
interval on the right with
$[\varphi^{\mu}(\tau),\varphi^{\mu}(\tau)+\eps
\varpi^{\mu}(\tau)]$. Then we will show that the error we
make is asymptotically small.

From (\ref{SDEphase}) we get that $\varpi^{\lambda}$ satisfies the following SDE:
\begin{equation}\label{derphase}
d \varpi^{\lambda}=dt+\re \left[  (- i \varpi^{\lambda})
d\cZ^\la \right]=dt+   \varpi^{\lambda} \im \left[ d\cZ^\la
\right] ,\qquad \varpi^{\lambda}(0)=0
\end{equation}
where $d\cZ^\la=e^{-i\varphi^\la} d\cW $. Since the SDE in
(\ref{SDEphase}) has the noise term $\re[d\cZ^\la]$ and the
last equation has $\im[d\cZ^\la]$, the two processes are
independent (for a given fixed $\lambda$). From the SDEs
(\ref{SDEphase}) and (\ref{derphase}) we get that
$\varphi^{\mu}(\tau)$ is $\cN(\mu\tau,\frac32 \tau )$ and
$\Exp \varpi^{\mu} =\tau$. Using the independence of
$\varphi^{\mu}(\tau)$ and $\varpi^{\mu} $ we get
\[
\tau^{-1}\lim_{\eps\to 0} \eps^{-1}
\Exp\#\set{[\varphi^{\mu}(\tau),\varphi^{\mu}(\tau)+\eps
\varpi^{\mu}(\tau)]\cap\of{2\pi \Z}}= \sum_{k\in \Z} p(2k
\pi+\mu\tau)
\]
where $p(\cdot)$ is the density of $\cN( 0,\frac32 \tau)$.
The only thing left is to show is that
\begin{equation}\label{intensity2}
\lim_{\eps\to 0} \eps^{-1}
\Exp\#\set{[\varphi^{\mu}(\tau)+\eps
\varpi^{\mu}(\tau),\varphi^{\mu+\eps}(\tau))]\cap\of{2\pi
\Z}}=0.
\end{equation}
We start by noting that if $Z$ is a random variable with density $f(x)$ and $x\in \Real, y\in \Real_+$ then
using the notation $f_{2\pi}(x)=\sum_{k\in \Z} f(x+2\pi k)$ we have
\begin{eqnarray}
\Exp \#\set{[Z+x,Z+x+y]\cap\of{2\pi \Z}}&=&\int_{x}^{x+y}
f_{2\pi}(-s) ds \le |y| \max_{s\in [0,2\pi)}
f_{2\pi}(s).\label{eq:aux}
\end{eqnarray}
The same upper bound holds if $y<0$.

By (\ref{SDEphase}) $X=\varphi^{\mu}(\tau)+\eps
\varpi^{\mu}(\tau)$ can be written as $X_0+\cB(\tau)$ where
$\cB$ is a standard Brownian motion independent of $\cW$
and $X_0$ is measurable with respect to the $\sigma$-field
generated by $\cW$. Since the density function of $
\cB(\tau)\textup{ mod } 2\pi$ is bounded by a $\tau$
dependent constant, we may use (\ref{eq:aux}) after
conditioning on $\cW$, which gives
\[
 \Exp\#\set{[\varphi^{\mu}(\tau)+\eps \varpi^{\mu}(\tau),\varphi^{\mu+\eps}(\tau))]\cap\of{2\pi \Z}}\le
c \, \Exp
\abs{\varphi^{\mu+\eps}(\tau)-(\varphi^{\mu}(\tau)+\eps
\varpi^{\mu}(\tau))}.
\]
To bound the right hand side, we use the integral form of
the remainder in the Taylor expansion
\begin{eqnarray*}
\Exp
\abs{\varphi^{\mu+\eps}(\tau)-(\varphi^{\mu}(\tau)+\eps
\varpi^{\mu}(\tau))}&\le& \eps \int_\mu^{\mu+\eps} \Exp
\abs{\partial^2_\lambda \varphi^x(\tau)} dx \le c \eps^2.
\end{eqnarray*}
In the last step we used the Cauchy-Schwarz inequality and
Lemma \ref{l:secondderivative}. This proves
(\ref{intensity2}) and completes the proof of Theorem
\ref{intensity}.
\end{proof}

\begin{proof}[Proof Theorem \ref{gaps} (Probability of large gaps)]
Let $\alpha=\alpha^{\lambda/\tau}$. We bound the desired
probability in terms of phase function events:
\begin{equation}\label{gap1}
 \Prob\of{\varphi^{0}(\tau)\in \of{0,{\eps}}
 \,\, \textup{mod}\,\, 2\pi \textup{ and } \alpha(\tau)\le  2\pi-\eps}
 \le \Prob\of{\sch[0,\la]=0}\le
 \Prob\of{\alpha(\tau)\le 2\pi}
\end{equation}
which is clear from Corollary \ref{limitPointProcess} and
the definition of $\alpha$. To get a lower bound first note
that we have $\varphi^0(\tau)=\cB(\tau)- \re\left[\cZ(\tau)
\right]$ where $\cZ$ is the driving Brownian motion in the
SDE (\ref{SDErelphase}) for $\alpha$. Since $\cB$ is
independent of $\cZ$ we have
\[
\Exp\left[   \mathbf{1}\set{\varphi^{0}(\tau)\in \of{0,
{\eps }} \,\, \textup{mod}\,\, 2\pi }    \Big|    \cZ
\right]\ge \eps \min_x f_{2\pi}(x) =\eps c_\tau>0.
\]
where $f_{2\pi}(x)$ is the density of $\cB(\tau) \textup{
mod } 2\pi$. This means that the lower bound in
(\ref{gap1}) can be estimated with $\eps c_\tau
\Prob\of{\alpha(\tau)\le 2\pi-\eps}$ from below.

Recall the SDE (\ref{sineSDE}):
\[
d\alpha=(\la/\tau) dt+ \sqrt{2} \sin(\alpha/2) d\cB,\quad
\alpha(0)=0.
\]
In Theorem 13 of \cite{VV} the authors analyze $\lim_{t\to
\infty} P(\tilde\alpha^\la(t)\le 2\pi)$ for a similar SDE:
\[
d\tilde \alpha=\lambda f dt+2 \sin(\tilde \alpha/2)
dB,\quad \tilde \alpha(0)=0
\]
and with certain weak assumptions on $f$ they get the
asymptotics $\exp(-\lambda^2 ||f||^2_2/8+o(1))$. The exact
same methods with $f=\frac{2}{\tau}{\bf  1}_{[0,
{\tau/2}]}$ in the present case give
\[
 \Prob\of{\alpha(\tau)\le (2-\eps)\pi}\ge \exp\left\{- \frac{\lambda^2}{4\tau} (1+o(1))\right\}, \qquad
\Prob\of{\alpha(\tau)\le 2\pi}\le \exp\left\{-
\frac{\lambda^2}{4\tau} (1+o(1))\right\}.
\]
We omit the straightforward details.
\end{proof}

The asymptotic gap probability for the
 $\textup{Sine}_\beta$ process (see formula
(\ref{sinebgap})) was analyzed to higher precision in
\cite{VV2}. Those techniques may also work here, resulting
in an asymptotic expansion of the gap probability. It would
be interesting to see how the more precise asymptotics
compare to the $\beta$-ensemble case.

\begin{proof}[Proof of Theorem \ref{thmclt} (Central limit theorem)]
$ $

\noindent By (\ref{SDEphase}) we have $\of{\varphi^0(\tau),
\varphi^{\la}(\tau)-\la}= (\xi_0+\xi_1,\xi_0+\xi_2)$ where
\[
\xi_0=\cB(\tau), \quad \xi_1=\int_0^\tau \re\left[e^{-i
\varphi^0(t)} d\cW \right], \quad \xi_2=\int_0^\tau
\re\left[e^{-i \varphi^\la(t)} d\cW \right].
\]
Clearly, $\xi_0, \xi_1$ and $\xi_2$ are Gaussians with the
appropriate means and   variances and $\xi_0$ is
independent of $(\xi_1, \xi_2)$. However, the joint
distribution of $(\xi_1, \xi_2)$ is not Gaussian.  So we
need to prove that as $\lambda\to\infty$ the joint weak
limit of $(\xi_1,\xi_2)$ exists and it is given by a pair
of independent normals. Let $\cZ(t)=\int_0^t e^{-i
\varphi^0(s)} d\cW$ and $\cZ=\frac1{\sqrt{2}}(\cB_1+i
\cB_2)$ then
\[
\xi_1=\frac1{\sqrt{2}} \int_0^\tau d\cB_1, \qquad
\xi_2=\frac1{\sqrt{2}} \int_0^\tau \cos(\alpha^\la)
d\cB_1+\frac1{\sqrt{2}} \int_0^\tau \sin(\alpha^\la)
d\cB_2:=\xi_{2,1}+\xi_{2,2}.
\]
We will show that $(\xi_1, \xi_{2,1},\xi_{2,2})$ converges
weakly to three independent mean zero normals with
variances $\tau/2, \tau/4, \tau/4$. It is enough to prove
that for any $(a_1,a_2,a_3)\in \Real^3$ the random variable
$v^\la=a_1 \xi_1+a_2 \xi_{2,1}+a_3 \xi_{2,2}$ converges to
a mean zero normal with variance
$\tau(a_1^2/2+a_2^2/4+a_3^2/4)$. By representing the
Brownian integral as a time changed Brownian motion we can
see that $v^\la$  has the same distribution as $\hat
\cB\of{\frac12 \int_0^\tau (a_1+a_2
\cos(\alpha^\la))^2+a_3^2 \sin(\alpha^\la)^2  dt}$ for some
standard Brownian motion $\hat \cB$. All we need to show is
that
\[
2 \int_0^\tau (a_1+a_2 \cos(\alpha^\la))^2+a_3^2
\sin(\alpha^\la)^2 dt\to \tau(2a_1^2+a_2^2+a_3^2) \quad
\textup{in probability}.
\]
Using $\cos(x)^2=(\cos(2x)+1)/2$ and
$\sin(x)^2=(1-\cos(2x))/2$ this reduces to
\[
\int_0^\tau \cos(2\alpha^{\lambda}) dt=\re\int_0^1 e^{2i
\alpha^{\lambda}}  dt\to 0, \qquad \int_0^\tau
\cos(\alpha^{\lambda}) dt=\re\int_0^\tau e^{i
\alpha^{\lambda}} dt\to 0
\]
in probability. We work out the second claim, as the first
one can be done the same way. Using (\ref{sineSDE}) and
It\^o's formula we get
\[
\frac{1}{i \lambda} d \of{ e^{i \alpha^{\lambda}}} =e^{i
\alpha_\lambda}dt+\frac{\sqrt{2} }{\lambda}
\sin(\alpha^{\lambda}/2) d\cB^\lambda
+\frac{i}{\lambda}\sin(\alpha^{\lambda}/2)^2 dt
\]
and
\[
\int_0^\tau e^{i \alpha^{\lambda}}
dt=\frac{1}{i\lambda}\of{e^{i
\alpha_\lambda(\tau)}-1}-\frac{\sqrt{2}}{\lambda}
\int_0^\tau \sin(\alpha^{\lambda}/2) d\cB^\lambda+\frac{i
}{\lambda}\int_0^\tau \sin(\alpha^{\lambda}/2)^2 dt.
\]
As $\la\to \infty$ the first and third terms converge to 0
a.s., while the second term converges to 0 in $L^2$. This
means that their sum will converge to 0 in probability
which is what we needed to prove the joint limit theorem
for $(\varphi^{0}(\tau), \varphi^\lambda(\tau)-\lambda)$.

The limit  (\ref{cltformula}) follows from
\[
\sch[0,2\pi k+\theta]-k=\# \{ [\varphi^{0}(\tau),
\varphi^{\lambda/\tau}(\tau)]\cap 2\pi \Z \}-k=\left
\lfloor
\frac{\varphi^{\lambda/\tau}(\tau)-(2k\pi+\theta)+\theta}{2\pi}
\right \rfloor-\left \lfloor \frac{\varphi^0(\tau)}{2\pi}
\right \rfloor,
\]
where we also used that $\varphi^\lambda(t)$ is increasing in $\lambda$.
\end{proof}
The proof of Theorem \ref{cltsineb} is very similar.
\begin{proof}[Proof \ref{cltsineb} (Central limit theorem for $\textup{Sine}_\beta$)]
We will consider (\ref{sineb}) recalling that $\cZ$ is a complex Brownian motion with independent standard real and imaginary parts and hence
\begin{equation}\label{sineb1}
d \alpha^{\lambda}=\lambda \frac{\beta}{4}
e^{-\frac{\beta}{4}t} dt+ 2 \sin(\alpha^{\lambda}/2) d\cB,
\quad \alpha^\lambda(0)=0\quad t\in[0,\infty).
\end{equation}
First note that $\tilde \alpha(t)=\alpha^{\lambda}(T+t)$
with $T=\frac{4}{\beta} \log(\beta \lambda/4)$ satisfies
the same SDE with $\lambda=1$. Therefore
$$\frac{\alpha^{\lambda}(\infty)-\alpha^\la(T)}{\sqrt{\log(\lambda)}}\to 0$$
in probability. So it suffices to find the the weak limit
of $$\frac{\alpha^{\lambda}(T)-\lambda}{2\pi \sqrt{\log
\lambda}}.$$

We have  $$\alpha(T)-\lambda=-\frac{4}{\beta}+\int_0^T
2\sin(\alpha^{\lambda}/2) d\cB$$
which means
$$\alpha(T)-\lambda+\frac{4}{\beta}\eqd \hat \cB\of{\int_0^T
4\sin(\alpha^{\lambda}/2)^2 dt}$$ for a certain standard
Brownian motion $\hat \cB$. In order to prove the required
limit in distribution
 we only need to show that $\frac{4}{\log \lambda}\int_0^T  \sin(\alpha^\la/2)^2 dt\to \frac{8}{\beta}$ in probability. We have
\[
\frac{4}{\log \lambda}\int_0^T  \sin(\alpha^\la/2)^2 dt=\frac{8 \log\left[\beta  \lambda /4\right]}{\beta \log \lambda}+\frac{2}{\beta \log \lambda} \int_0^T \cos(\alpha^\la) dt.
\]
The first term converges to $8/\beta$.  To bound the second term we compute
\begin{eqnarray*}
\frac{4}{i \beta \lambda \log \lambda} d\of{e^{i \alpha^\la+\beta t/4}}&=&\frac{e^{i \alpha^\la}}{\log \la} dt+\frac{8}{\beta \lambda \log \lambda} e^{i \alpha^\la+\beta t/4} \sin(\alpha^\la/2) d\cB\\
&&+\frac{8i}{\beta \lambda \log \lambda} e^{i \alpha^\la+\beta t/4} \sin(\alpha^\la/2) ^2 dt+\frac{1}{i \lambda \log \lambda} e^{i \alpha^\la+\beta t/4}dt.
\end{eqnarray*}
The integral of the left hand side is $
\frac{4}{i \beta \lambda \log \lambda}\left[4 e^{i \alpha^\la(T)} \lambda /\beta- 1 \right]=O((\log \la)^{-1})
$. The integrals of the last two terms in the right hand side are of the order of $(\lambda \log \lambda)^{-1} \int_0^T e^{\beta t/4} dt=O((\log \la)^{-1})$. Finally, the integral of the second term on the right has an $L^2$ norm which is bounded by $C (\log \la)^{-1}$. This means the integral of the first term on the right, $(\log \la)^{-1} \int_0^T e^{i \alpha^\la} dt$ converges to 0 in probability from which the statement of the theorem follows.
\end{proof}

\section{The Brownian carousel}\label{s:carousel}

The SDE system (\ref{SDErelphase}) has a geometric
interpretation using the Brownian carousel introduced in
\cite{VV}. Recall the SDE system \eqref{SDErelphase}
\begin{equation}\label{SDErelphase1}
d\alpha^\la(t)=\la dt+ \re\left[
\of{e^{-i\alpha^\la(t)}-1}d\cZ \right],\quad
\alpha^\la(0)=0,
\end{equation}
Here $ \cZ$ is a standard complex Brownian
motion.

Consider the hyperbolic plane, let $x_0$ be a point on the
boundary and let $\mathcal{V}(t), t\ge 0$ be hyperbolic
Brownian motion. For a given $\lambda\in \Real$  we rotate
the boundary point $x_0$ about the moving center
$\mathcal{V}( t)$ with a constant angular speed $ \lambda$
and denote its position by $x^\la(t)$. This is the
\emph{Brownian carousel} with constant speed function $1$
and in Section 2 of \cite{VV} it was proved that the
hyperbolic angle determined by the points $x_0,
\mathcal{V}( t), x^{\la}(t)$  satisfies the SDE
(\ref{SDErelphase1}).

The evolution of $x^\la(t)$ can be described by an ODE.
Consider the Poincar\'e disk model for the hyperbolic
plane. Then  the boundary points are points on the unit
circle which can be described by an angle $x^\la(t)=e^{i
\gamma^\la(t)}$.  The hyperbolic Brownian motion
$\mathcal{V}$ in this model satisfies the following SDE:
\begin{equation}\label{eq:hypBM}
d\mathcal V= \frac{1-|\mathcal{V}|^2}{2} d\cY
\end{equation}
where $\cY$ is a standard complex Brownian motion. If we
set $x_0=1, \gamma^\la(0)=0$ then
\begin{equation}
\partial_t \gamma^\la=\la \frac{ |e^{i \gamma^\la}-\mathcal{V}|^2}{1-|\mathcal{V}|^2}, \quad \gamma^\la(0)=0.\label{odegamma}
\end{equation}
It is clear that this ODE system has a unique solution
which is analytic and strictly increasing in $\la$ for any
$t>0$. Note that one usually cannot get $\alpha^\la(t)$
from $\gamma^\la(t)$, however $\alpha^\la(t)\in 2\pi \Z$
if and only if $\gamma^\la(t)\in 2\pi \Z$.

Next we will prove Theorem \ref{thm:carousel}: if we add a
random shift to $\sch$ then the resulting point process can
be described with a Brownian carousel.

\begin{proof}[Proof of Theorem \ref{thm:carousel}]
Let $U$ be uniform on $[0,2\pi]$ and independent of $\sch$.
By Corollary \ref{limitPointProcess} and Remark \ref{rem5}
the point process $\sch+U$ has the same distribution as the
solutions of the equation
$\varphi^{\lambda/\tau}(\tau)=U\mod 2\pi$. This can be
rewritten as
\[
\alpha^\la(\tau)=-\varphi^0(\tau)+U \mod 2\pi.
\]
Since $U$ is independent of $\alpha^\la$ and $\varphi^0$,
we have
$$(\alpha^\la(\tau),-\varphi^0(\tau)+U) \mod 2\pi \quad \eqd \quad (\alpha^\la(\tau),U) \mod 2\pi.$$ Thus we can
just look at the solutions of $\alpha^\lambda(\tau)=U \mod
2\pi$.

For a given $u\in[0,2\pi)$ the solution set of
$\alpha^\la(\tau)=u \mod 2\pi$ can be described by the
carousel construction: it is given by the set of those
$\la\in\Real$ for which the hyperbolic angle $x_0,
\mathcal{V}(T), x^\la(T)$ is equal to $u$.

By the Markov property of hyperbolic Brownian motion, the
hyperbolic angle $x_0, \mathcal{V}(T), \mathcal
{V}(\infty)$ is just uniform and independent of $V$ on the
time interval $[0,T]$, so we may as well call it $U$. The
claim follows.
\end{proof}

We also provide an alternate proof to a version of Theorem
\ref{repulsion} using the Brownian carousel.
\begin{thm}[Eigenvalue repulsion]\label{repulsion2} For $\mu\in \Real$ and $\eps>0$
we have
\begin{equation}\label{bound2eig2}
\Prob\set{\sch[\mu,\mu+\eps]\geq 2}\leq
4\exp\left(-\frac{(\log(2\pi/\eps)-\tau-1)^2}{\tau}\right).
\end{equation}
whenever the squared expression is nonnegative.
\end{thm}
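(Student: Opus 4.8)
The plan is to push the statement through the Brownian carousel representation of Theorem \ref{thm:carousel} and reduce it to a hitting estimate for the radial part of hyperbolic Brownian motion. First I would note, exactly as in the first proof of Theorem \ref{repulsion}, that $\Prob(\sch[\mu,\mu+\eps]\ge 2)$ does not depend on $\mu$, and that this common value equals $\Prob(\schs[0,\eps]\ge 2)$ (add an independent uniform shift to $\sch$ and use the $\mu$-independence again). So it suffices to bound $\Prob(\schs[0,\eps]\ge 2)$, which by Theorem \ref{thm:carousel} is the probability that the carousel set $\{\la:\ x^{\la/\tau}(\tau)=\mathcal V(\infty)\}$ contains two points of $[0,\eps]$, where $x^\la(t)=e^{i\gamma^\la(t)}$ evolves by the ODE \eqref{odegamma} driven by the hyperbolic Brownian motion $\mathcal V$ of \eqref{eq:hypBM}. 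If there are $0\le\la_1<\la_2\le\eps$ with $e^{i\gamma^{\la_1/\tau}(\tau)}=e^{i\gamma^{\la_2/\tau}(\tau)}=\mathcal V(\infty)$, then since $\la\mapsto\gamma^{\la/\tau}(\tau)$ is continuous and strictly increasing (see the remark after \eqref{odegamma}) with $\gamma^0(\tau)=0$, the difference $\gamma^{\la_2/\tau}(\tau)-\gamma^{\la_1/\tau}(\tau)$ is a positive multiple of $2\pi$; hence $\gamma^{\eps/\tau}(\tau)\ge\gamma^{\la_2/\tau}(\tau)\ge 2\pi$. Thus $\Prob(\sch[\mu,\mu+\eps]\ge 2)\le\Prob(\gamma^{\eps/\tau}(\tau)\ge 2\pi)$.

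Next I would bound $\gamma^{\eps/\tau}(\tau)$ via \eqref{odegamma} and the elementary inequality $|e^{i\gamma}-\mathcal V|^2\le(1+|\mathcal V|)^2$:
\[
\gamma^{\eps/\tau}(\tau)=\frac{\eps}{\tau}\int_0^\tau\frac{|e^{i\gamma^{\eps/\tau}(t)}-\mathcal V(t)|^2}{1-|\mathcal V(t)|^2}\,dt\ \le\ \frac{\eps}{\tau}\int_0^\tau\frac{1+|\mathcal V(t)|}{1-|\mathcal V(t)|}\,dt\ \le\ \eps\,e^{\,r^*},\qquad r^*:=\sup_{0\le t\le\tau}r(t),
\]
where $r(t)=\log\frac{1+|\mathcal V(t)|}{1-|\mathcal V(t)|}=2\,\mathrm{arctanh}|\mathcal V(t)|$ is the hyperbolic distance of $\mathcal V(t)$ from the origin. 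Consequently $\gamma^{\eps/\tau}(\tau)\ge 2\pi$ forces $r^*\ge\log(2\pi/\eps)$, and the theorem reduces to proving $\Prob(r^*\ge x)\le 4\exp(-(x-\tau-1)^2/\tau)$ for $x=\log(2\pi/\eps)$ (the hypothesis of the theorem is exactly $x\ge\tau+1$).

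Finally I would establish this radial estimate from the SDE for $r$. Applying It\^o's formula to $r(t)=2\,\mathrm{arctanh}|\mathcal V(t)|$ and using \eqref{eq:hypBM} shows that $r$ solves an autonomous SDE with constant diffusion coefficient, so that its martingale part has variance $\tfrac12 t$ at time $t$, and with drift tending to $\tfrac14$ as $r\to\infty$; in particular the drift is bounded by $1$ outside a fixed neighbourhood of the origin. A stopping-time and reflection argument --- restarting $r$ at its first visit to a fixed level and dominating the subsequent motion, up to the hitting time of $x$, by a Brownian motion with drift $1$ --- then gives $\Prob(r^*\ge x)\le 4\exp(-(x-\tau-1)^2/\tau)$ for $x\ge\tau+1$, the slack in the constants ($\tau+1$ rather than $\tau/4$, and the factor $4$ rather than $2$) comfortably absorbing both the reflection bound $\Prob(\sup_{[0,1]}|B|\ge y)\le 4e^{-y^2/2}$ and the correction near the origin. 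The main obstacle is precisely this last step: the radial drift is Bessel-type and singular at the origin, so one cannot simply write $r(t)\le(\text{Brownian motion})+ (\text{linear drift})$ pathwise; one has to argue that the drift accumulated during the excursions of $r$ below a fixed level (each of which begins and ends at that level) is negligible, so that the Gaussian tail survives with the stated constants.
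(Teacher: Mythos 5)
Your proposal is correct and is essentially the same argument as the paper's second proof of eigenvalue repulsion (Theorem~\ref{repulsion2}). The reduction to the carousel, the inequality $\gamma^{\eps/\tau}(\tau)\le \eps\,e^{r^*}$ (your version of this step is in fact cleaner than the paper's, which appears to have a small typo in the displayed bound on $\gamma$), and the conclusion $r^*\ge\log(2\pi/\eps)$ are all as in the paper.

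The one step you flag as a genuine obstacle --- handling the Bessel-type singularity of the radial drift at the origin --- is resolved in the paper by a short domination rather than by an excursion argument. It\^o gives $dq=\tfrac{1}{\sqrt2}\,dB+\tfrac14\coth(q)\,dt$ for $q=r$. One then replaces the drift $\tfrac14\coth(q)$ by the larger drift $\infty\cdot\mathbf 1_{\{q\le1\}}+\tfrac14\coth(1)$; the dominating process, once it reaches level $1$, can never go below it and above level $1$ has drift bounded by $\tfrac14\coth(1)<1$. This yields the pathwise bound $q(t)\le 1+t\coth(1)/4+|B(t)|/\sqrt2$, and the reflection estimate $\Prob(\sup_{[0,\tau]}|B|\ge y)\le 4e^{-y^2/(2\tau)}$ then gives exactly \eqref{bound2eig2}. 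So your worry about excursions below a fixed level is addressed by the simpler observation that a lower bound $q\ge1$ (for the dominating process) is enough; you do not need to track the drift accumulated during those excursions at all.
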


\begin{proof}
As in the first proof we can assume that $\mu=0$. By  (\ref{e:gapbound}) if there are at least  two points in $[0,\eps]$ then the relative phase function must be at least $2\pi$ which means that the Brownian carousel had to take at least one full turn. Thus
\[
\Prob\set{\sch[0,\eps]\geq 2}\leq \Prob
\set{\alpha^{\eps/\tau}(\tau)\geq 2\pi}=\Prob \set{\gamma^{
\eps/\tau}\of{\tau}\geq 2\pi}.
\]
where $\gamma$ is the solution of \eqref{odegamma}. From
(\ref{odegamma}) we get
\[
\gamma^{\eps/\tau}(\tau)\le  \max_{0\le t \le \tau}
(1-|\cV_t|^2)^{-1}=(1-\max_{0\le t \le \tau}|\cV_t|^2)^{-1}
\]
which means that
\begin{equation}
\gamma^{\eps/\tau}\of{\tau}\geq 2\pi \quad\Rightarrow\quad
1-\frac{\eps}{2 \pi}\le \max_{0\le t\le \tau}
|\mathcal{V}_t|^2.\label{aux1}
\end{equation}
In the Poincar\'e disk model the hyperbolic distance between the origin and a point $z$ in the unit disk is given by $q(z)=\log\of{\frac{1+|z|}{1-|z|}}$. Thus (\ref{aux1}) implies
\[
\max_{0\le t\le\tau} q(\mathcal{V}_t)\ge \log\of{2\pi/
\eps}.
\]
The probability that the hyperbolic Brownian motion leaves
a ball  with a large radius $r$ in a fixed time is
comparable to the probability that a one-dimensional
Brownian motion  leaves $[-r,r]$ in the same time. This
follows  by noting that It\^o's formula with
(\ref{eq:hypBM}) gives

\[
dq =\frac{dB}{\sqrt{2}}+ \frac{\coth(q)}{4} dt
\]
for the evolution of $q(\cV)$ with a standard Brownian
motion $B$. By increasing the drift from $\coth(q)/4$ to
$\infty {\bf 1}_{q\in [0,1]}+\coth(1)/4$ we see that  $q$
is stochastically dominated by $1+t\coth(1)/4 +
|B(t)|/\sqrt{2}$ where $B$ is standard Brownian motion and
$\coth(1)<4$. Thus
\begin{eqnarray*}
\Prob\of{\max_{0\le t\le\tau } q(\mathcal{V}_t)\ge
\log\of{2\pi/ \eps}}&\le&
\Prob\of{\max_{0\le t\le\tau} |B(t)|\ge \log\of{2\pi/\eps}-1-\tau}\\
&\le&
4\exp\left(-\frac{(\log(2\pi/\eps)-\tau-1)^2}{\tau}\right)
\end{eqnarray*}
which proves the theorem.
\end{proof}

\section{Convergence of the regularized transfer matrix evolution}\label{s:SDEconvergence}
\newcommand{\zx}{X}
\newcommand{\dzx}{{\mathcal{Y}}}
This section is devoted to the proof of Theorem \ref{DiffusionTransfer}. In order to keep the notation simple, we will only treat the case when $m=1$, i.e.~when a single value $\la\in\Comp$ is fixed. The extension to $\ula=(\la_1,\cdots,\la_m)$ for $m>1$ is straightforward. We drop $\la$ from the notation.
The identity
\[T(y)T^{-1}(x)=I+\mat{0}{y-x}{0}{0}\]
 and the recursion $M_\ell =T(E+\eps_\ell )M_{\ell -1}$ from (\ref{Mn}) implies
\begin{eqnarray}\nonumber
\X_\ell 
&=&T^{-\ell }(E)\set{T(E+\eps_\ell )T^{-1}(E)}T^{\ell}(E)\X_{\ell -1}\\
&=&T^{-\ell }(E)\mat{1}{\eps_\ell}{0}{1}T^{\ell}(E)\X_{\ell
-1}. \label{recXn}
\end{eqnarray}
This shows that $\X_\ell , 0\leq \ell \leq n$ is a Markov
chain,   the initial term $\X_0=I$. As $E$ is fixed, from
now on we write $T$ for $T(E)$. We will work in the basis
diagonalizing $T$ i.e. we consider $\zx_\ell =Z^{-1}\X_\ell
Z $ instead of $\X_\ell$. (We have learned that such a
change of basis has been considered for a slightly
different problem by \cite{HSB}). Using $T=ZDZ^{-1}$ from
(\ref{zdef}), we obtain after simplification
$$T^{-\ell} \mat{0}{\eps_\ell}{0}{0} T^{\ell}=\frac{i\rho\eps_{\ell }}{2} Z
O_{\ell }Z^{-1},\qquad O_{\ell }:=\mat{1}{z^{2\ell}}{-\bar
z^{2\ell}}{-1}.$$ Therefore $\zx_\ell $ is a Markov chain
with the initial condition $\zx_0=I$ and given by the
recurrence
\begin{equation}\zx_{\ell }=\zx_{\ell -1} +U_\ell  \zx_{\ell -1},\qquad U_\ell=U_\ell^n :=i \rho \eps_{\ell }
O_{\ell }/2\label{Xdef}\end{equation} Because of the
oscillating factors $z^{\pm(2\ell)}$, the term $U_\ell
\zx_{\ell -1}$ is too rough to approximate a stochastic
differential. However, on a mesoscopic scale $1\ll K\ll n$,
the difference $X_{\ell+K}-X_{\ell}=\sum_{j=1}^{K}U_{\ell
+j}X_{\ell +j-1}$ becomes a good approximation for a
stochastic differential because the oscillations cancel in
the sum. For convenience of the reader, we first present a
heuristic derivation of the limiting SDE and then we give a
rigorous proof.
\begin{proof}[Heuristic proof
]
As $\zx_{\ell+K}=(I+U_{\ell
+K})\cdots(I+U_{\ell+2})(I+U_{\ell +1})\zx_{\ell}\simeq
(I+\sum_{j=1}^{K}U_{\ell+j})\zx_{\ell}$, we have
$$X_{\ell +K}-X_\ell \simeq \sum_{j=1}^{K}U_{\ell +j}X_{\ell }=\frac{i\rho}{2} \of{\sum_{j=1}^{K}\eps_{\ell +j}O_{\ell +j}}X_{\ell }.$$
We look separately at the drift and the noise contributions, i.e. we split
$$\frac{i \rho}{2} \sum_{j=1}^{K}\eps_{\ell +j}O_{\ell +j}=\of{\frac{i\la}{2n} \sum_{j=1}^{K}O_{\ell +j}} +\of{ - \frac{i\sigma\rho}{2 {n}^{1/2}} \sum_{j=1}^{K}\omega_{\ell +j}O_{\ell +j}}:= \cD + \cN.$$
Since $-2<E<2$, we have  $\abs{z}=1$ and $z^2\neq 1$, which implies that $\sum_{j=1}^K z^{\pm (2\ell+2j)}$ is bounded for large $K$. With $\Delta t=K/n$ we then have
$$\cD\simeq \frac{\la \Delta t}{2} \mat{i}{0}{0}{-i}.$$
For the noise term we write
\[
\cN=\frac{-i\sigma\rho}{ 2 K^{1/2} } \sqrt{\Delta
t}\sum_{j=1}^{K}\omega_{\ell +j}O_{\ell
+j}=\frac{\sigma\rho}{ 2} \sqrt{\Delta t} \mat{i \xi_{\ell
,K}}{\zeta_{\ell ,K}}{\bar \zeta_{\ell ,K}}{- i \xi_{\ell
,K}}
\]
with
\begin{equation}
 \xi_{\ell,K}=-K^{-1/2} \sum_{j=1}^K \omega_{\ell+j}, \qquad \zeta_{\ell ,K}=-i K^{-1/2} \sum_{j=1}^K \omega_{\ell+j}  z^{2\ell+2j}.\label{eq:normal}
\end{equation}
In the limit $K\To\infty$, $(\xi_{\ell ,K}, \re \zeta_{\ell,K}, \im \zeta_{\ell,K})$ is a mean zero Gaussian vector $(\xi_{\ell}, \re \zeta_{\ell}, \im \zeta_{\ell})$
whose distribution is determined by the covariance matrix. Computing the covariance matrix is equivalent to computing the limits of the expectations of $ \xi_{\ell,K}^2,  \xi_{\ell,K}, \zeta_{\ell,K}, \zeta_{\ell,K}^2$ and $|\zeta_{\ell,K}|^2 $ since $\Exp \re \zeta\im \zeta=\frac{1}{2}\im \Exp \zeta^2$,
$\Exp (\re \zeta)^2=\frac12\of{\Exp|\zeta|^2+\re\Exp \zeta^2}$, $\Exp (\im \zeta)^2=\frac12\of{\Exp|\zeta|^2 - \re\Exp \zeta^2}$, $\Exp \xi \re \zeta=\re \Exp \xi \zeta$ and $\Exp \xi \im \zeta=\im \Exp \xi \zeta$. Using (\ref{eq:normal}) we get
\begin{equation}\label{covariances}
\Exp \xi_\ell^2=\Exp |\zeta_\ell^2|=1, \quad \Exp \xi_\ell \zeta_\ell=
\lim_{K\To\infty}i K^{-1} \sum_{j=1}^K  z^{2\ell+2j}\quad \textrm{ and } \quad
\Exp \zeta_\ell^2=-\lim_{K\To\infty}K ^{-1}  \sum_{j=1}^K  z^{4\ell+4j}.
\end{equation}
The first sum in \eqref{covariances} converges to zero. The assumption $E\in(-2,2)\backslash\{0\}$ implies that $|z^4|=1$, $z^4\neq 1$ and therefore the second sum in \eqref{covariances} converges to zero as well. Thus asymptotically $\xi_{\ell,K}$ and $\zeta_{\ell,K}$ are independent standard real and complex normals. Collecting our estimates  we formally get the SDE
\begin{equation}\label{XSDE}
dX=\mat{i \la/2}{0}{0}{-i \la/2} Xdt +\frac{
\sigma\rho}{2} \mat{i d \cB}{d \cW}{d\overline{\cW}}{-
id\cB}X,\qquad X(0)=I.
\end{equation}
from which Theorem \ref{DiffusionTransfer} would follow after rescaling time and $\lambda$. In the case $E=0$, we get $\Exp \zeta_\ell^2=-1$, which implies that asymptotically $\xi_{\ell,K}$ and $\im \zeta_{\ell,K}$ are independent standard normals and $\re \zeta_{\ell,K}=0$. In this case we formally get the SDE
\begin{equation}\label{formalSDE_E0}
dX=\mat{i \la/2}{0}{0}{-i \la/2} Xdt +\frac{ \sigma
\rho}{2} \mat{i d \cB_1}{id \cB_2}{-id \cB_2}{-
id\cB_1}X,\qquad X(0)=I,
\end{equation}
where $\cB_1,\cB_2$ are independent standard Brownian motions.
\end{proof}

As we will show these  computations can be made rigorous.

\begin{proof}[Proof of Theorem \ref{DiffusionTransfer}]
In order to make the convergence argument precise, we use Proposition \ref{convergence} which is a slight modification of Proposition 23 in \cite{VV}. We show the convergence in case of a single $\lambda\in \Comp$, the proof for finite dimensional marginals in $\lambda$ is very similar.

We will prove  that $X^n_\ell=Z^{-1} Q_\ell^n Z$ converges to the solution of the SDE (\ref{XSDE}), from this the statement of the theorem follows. We can identify the $2\times 2$ complex matrix $X_\ell^n$ with a vector in $ \Real^8$ by taking the real and imaginary parts of the entries. From (\ref{Xdef}) one gets that the conditional distribution of $X^n_{\ell+1}-X^n_\ell$ given $X^n_\ell=x$ is the same as that of
\[
Y^n_\ell(x):= \left(\frac{i \la}{2 n}-\frac{i \sigma\rho
\omega_{\ell+1}}{2\sqrt{n}}\right)
\mat{1}{z^{2(\ell+1)}}{-\bar z^{2(\ell+1)}}{-1} x.
\]
From this $b^n(t,x)$ and $a^n(t,x)$ are computable. The
function $b^n(t,x)$ will be a vector in $\Real^8$
corresponding to the complex matrix
\[
\frac{i \la}{2
} \mat{1}{z^{2(\ell+1)}}{-\bar z^{2(\ell+1)}}{-1} x\qquad \textup{with} \qquad \ell=\lfloor nt \rfloor.
\]
The asymptotic variance $a^n(t,x)$ is a bit more cumbersome to write down, it is an $8\times 8$ matrix with entries which are linear combinations of terms of the form of  $A_{j} A_{k}$, $A_{j} \bar A_{k}$ and $\bar A_{j} \bar A_{k}$ with $j,k \in \{1,2\}^2$  where
\[
A=-\frac{i \sigma\rho}{2} \mat{1}{z^{2(\ell+1)}}{-\bar
z^{2(\ell+1)}}{-1} x\qquad \textup{with} \qquad
\ell=\lfloor nt \rfloor.
\]
Clearly the coordinates of $a^n(t,x)$ are bilinear
functions of $x$ and $\bar x$ with bounded coefficients
(depending on $\sigma\rho$ and various powers of $z, \bar
z$).

The functions $a(t,x), b(t,x)$ can be obtained from $a^n, b^n$ by writing zeros in place of the (non-trivial) powers of $z$ and $\bar z$, these are clearly  $C^2$ functions. Condition (\ref{e fia}) follows from the fact that $n^{-1} \sup_\ell \sum_{j=1}^\ell z^{2j}$ and $n^{-1} \sup_\ell \sum_{j=1}^\ell z^{4j}$ both converge to 0. Because of this in the integrals of (\ref{e fia}) the $z$ terms will vanish in the limit and by the construction of $a$ and $b$ the other terms will  cancel. Condition (\ref{e lip}) is straightforward since $b, b^n$ are linear and $a, a^n$ are bilinear functions of $x, \bar x$ with bounded coefficients.  The condition (\ref{e 3m}) is a consequence of the assumption $\Exp|\omega_\ell|^3<\infty$ and since $X^n_0=I$ the last condition is also satisfied.

Thus we can apply Proposition \ref{convergence} and the only thing left is to show that the functions $a(t,x)$, $b(t,x)$ correspond to the variance and drift functions corresponding to (\ref{XSDE}). The fact that the drift function agrees is straightforward. To check the variance one needs to turn (\ref{XSDE}) into a real vector valued SDE which basically means that we need to take independent standard real and complex standard normals $B$ and $W$ and compute the variance of the random vector corresponding to
\[
-\frac{ \sigma\rho}{2} \mat{i B}{ W}{\bar W}{-i B} x
\]
Using $\Exp B^2=\Exp |W|^2=1$ and $\Exp BW=\Exp W^2=0$ one
can check that we get exactly $a(t,x)$ which finishes the
proof of \eqref{XSDE}. A time-change and the
reparametrization $\lambda\to \lambda/\tau$ gives the SDE
that is independent of $\sigma\rho$
\begin{equation}\label{XSDEnewtime}
dX=\frac{1}{2}\mat{i \la}{0}{0}{-i \la} Xdt +\frac{ 1}{2}
\mat{i d \cB}{d \cW}{d\overline{\cW}}{- id\cB}X,\qquad
X(0)=I.
\end{equation}
and we get the claimed SDE through multiplication on the
left by the matrix $Z$.
\end{proof}

The same argument works for  the proof of the first part of Theorem \ref{theoremE=0}. The only difference is that in that case $z=i$ thus $z^{4j}=\bar z^{4j}=1$ and $a(t,x)$ will be defined accordingly.

\section{Convergence of the rescaled eigenvalue process}\label{s:pointprocess}

In this section we prove the delocalization result (Theorem \ref{delocalization}) and the point process limit theorems (Corollary \ref{ConvergenceOfPoint} and \ref{limitPointProcess}).

\subsection*{Tightness bounds}
\begin{lem}\label{analyticmartingales} Let $(X_k(z): 1\leq k\leq n,z\in\Comp)$ be random $d_1\times d_2$ matrices whose entries have finite second moments. Assume
that $X_k(z)$ is analytic in $z$ and it is a martingale
with respect to a filtration $\cF_k$. Then for every
$r_1<r_2<\infty$ and $t>0$
$$\Prob\set{\max_{1\leq k\leq n, \abs{z}\leq r_1}\Tr X_k(z)X_k(z)^*\geq t}\leq t^{-1}\frac{r_2+r_1}{r_2-r_1} \frac{1}{2\pi}\int_{-\pi}^{\pi} \Exp\Tr X_n(e^{i\theta r_2}) X_n(e^{i\theta r_2})^* d\theta $$
\end{lem}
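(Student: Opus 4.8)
The plan is to separate the two maxima — over the ``time'' index $k$ and over the complex variable $z$ — and control each by a different tool: Doob's maximal inequality in $k$, and a Poisson‑kernel estimate for $|\text{analytic}|^2$ in $z$. Introduce the boundary mean square
$$S_k=\frac{1}{2\pi}\int_{-\pi}^{\pi}\Tr X_k(r_2e^{i\theta})X_k(r_2e^{i\theta})^*\,d\theta .$$
I would establish two facts: (i) for every $k$, deterministically $\max_{\abs z\le r_1}\Tr X_k(z)X_k(z)^*\le \frac{r_2+r_1}{r_2-r_1}\,S_k$; and (ii) $(S_k)_{1\le k\le n}$ is a nonnegative $\cF_k$‑submartingale. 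Granting these, the event in the lemma is contained in $\{\frac{r_2+r_1}{r_2-r_1}\max_{1\le k\le n}S_k\ge t\}$, so Doob's $L^1$ maximal inequality for $(S_k)$ gives $\Prob\{\max_k S_k\ge u\}\le u^{-1}\Exp S_n$; taking $u=t(r_2-r_1)/(r_2+r_1)$ and rewriting $\Exp S_n$ by Tonelli as $\frac1{2\pi}\int_{-\pi}^{\pi}\Exp\Tr X_n(r_2e^{i\theta})X_n(r_2e^{i\theta})^*\,d\theta$ produces exactly the asserted bound. If this integral is infinite there is nothing to prove, so one may assume it finite, and then $\Exp S_k\le \Exp S_n<\infty$ by the martingale property of the entries.

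For (i): each entry $z\mapsto X_k(z)_{ij}$ is entire, so $\abs{X_k(z)_{ij}}^2$ is subharmonic (since $\Delta\abs f^2=4\abs{f'}^2\ge 0$ for analytic $f$), and hence $u_k(z):=\Tr X_k(z)X_k(z)^*=\sum_{i,j}\abs{X_k(z)_{ij}}^2$ is subharmonic on $\Comp$. The sub‑mean‑value property in Poisson form then gives, for $\abs z\le r_1<r_2$,
$$u_k(z)\le \frac{1}{2\pi}\int_{-\pi}^{\pi}\frac{r_2^2-\abs z^2}{\abs{r_2e^{i\theta}-z}^2}\,u_k(r_2e^{i\theta})\,d\theta\le \frac{r_2+\abs z}{r_2-\abs z}\,S_k\le \frac{r_2+r_1}{r_2-r_1}\,S_k ,$$
where the Poisson kernel was bounded by its maximum $\frac{r_2^2-\abs z^2}{(r_2-\abs z)^2}=\frac{r_2+\abs z}{r_2-\abs z}$ and then by monotonicity in $\abs z$. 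Taking the supremum over $\abs z\le r_1$ yields (i). (An equivalent route that avoids invoking subharmonicity: $X_k(z)_{ij}$ is the Poisson integral of its boundary values on $\abs w=r_2$, so $|X_k(z)_{ij}|$ is dominated by the Poisson integral of $|X_k(r_2e^{i\theta})_{ij}|$, and a Cauchy--Schwarz against the Poisson probability measure upgrades this to the same bound for $|X_k(z)_{ij}|^2$.)

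For (ii): fix $\theta$. Since $(X_k(r_2e^{i\theta})_{ij})_k$ is an $\cF_k$‑martingale, $(\abs{X_k(r_2e^{i\theta})_{ij}}^2)_k$ is an $\cF_k$‑submartingale by the conditional Jensen inequality; summing over $i,j$ and integrating over $\theta$ (conditional Tonelli, all integrands nonnegative) gives $\Exp[S_{k+1}\mid\cF_k]\ge S_k$, while $S_k$ is $\cF_k$‑measurable and has finite expectation as observed above.

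I expect the only genuinely delicate points to be bookkeeping: the (conditional) Tonelli exchanges and the finiteness of $\Exp S_k$, both of which follow immediately from non‑negativity together with the finite‑second‑moment hypothesis, and the fact that $u_k$ is legitimately subharmonic on a neighbourhood of the closed disk of radius $r_2$, which is automatic because the entries are entire. The conceptual content is simply the pairing of Doob's inequality in the index $k$ with the Poisson estimate in $z$, the latter being precisely what manufactures the constant $\frac{r_2+r_1}{r_2-r_1}$.
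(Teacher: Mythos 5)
Your proof is correct and follows essentially the same route as the paper's: bound $\Tr X_k(z)X_k(z)^*$ for $\abs{z}\le r_1$ by the Poisson‑kernel constant $\frac{r_2+r_1}{r_2-r_1}$ times the boundary mean square $S_k$ (the paper cites the Poisson formula together with Jensen's inequality, which is the Cauchy--Schwarz variant you mention), observe that $(S_k)$ is a nonnegative submartingale, and close with Doob's $L^1$ maximal inequality plus Tonelli. You are slightly more explicit than the paper about why $(S_k)$ is a submartingale and about the integrability bookkeeping, but the decomposition and both key ingredients are identical.
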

\begin{proof} Since each entry $X_k(z)(i,j)$ is analytic in $z$, the Poisson formula and Jensen's inequality
gives $$\abs{X_k(z)(i,j)}^2\leq \frac{r_2+r_1}{r_2-r_1}
\frac{1}{2\pi} \int_{-\pi}^{\pi} \abs{X_k(e^{i\theta
r_2})(i,j)}^2 d\theta, \qquad \mbox{for} \abs{z}\leq r_1.$$
Summing over all $i,j$ gives
$$\Tr X_k(z)X_k(z)^*\leq \frac{r_2+r_1}{r_2-r_1} \frac{1}{2\pi}
x_k, \qquad \mbox{for} \abs{z}\leq r_1,$$ where
$$x_k:=\int_{-\pi}^{\pi} \Tr X_k(e^{i\theta r_2}) X_k(e^{i\theta r_2})^* d\theta$$
As $x_k$ is a submartingale, the statement follows from
Doob's inequality and Fubini's theorem.
\end{proof}
\begin{proof}[Proof of Theorem \ref{delocalization}] Let $\Xi_\lambda=\Xi=T(E+\frac{\la}{\rho n})$. For large enough $n$ and all complex $\la$, $\abs{\la}\leq r_2:=2R$,
the eigenvalues and eigenvectors of $\Xi$ are close to those of $T(E)$, we can write
\begin{equation}\label{xidecomp}
\Xi_\lambda=Z_\lambda D_\lambda Z_\lambda^{-1}
\end{equation}
with $\norm{Z_\lambda-Z}_2, \norm{D_\lambda-D}_2, \norm{Z_\lambda^{-1}-Z^{-1}}_2$ all bounded by $c/n$
 with $c$ depending on $E, R$.
 Since $T(E)$ has unit length eigenvalues we can find another constant $C=C(R,E)$ so that the eigenvalues of $\Xi^k, |k|\le n$ are uniformly bounded by $C$. Using this with the decomposition  (\ref{xidecomp}) we get that $\Tr \Xi^k \Xi^{*k}$ is also uniformly bounded for $|k|\le n$.

Setting
$S_\ell =\Xi^{-\ell }M_\ell $ we have, analogously to
\eqref{recXn}, that
\begin{equation}
S_\ell =S_{\ell -1}+ \cE_\ell  S_{\ell -1}, \qquad \cE_\ell  := \Xi^{-\ell }
\mat{0}{-\sigma n^{-1/2}\omega_\ell}{0}{0}
\Xi^{\ell}.\label{Srec}
\end{equation}
Since $\ev \cE_\ell=0$ we get
$$\Exp S_\ell S_\ell ^*=\Exp S_{\ell -1}S_{\ell -1}^*+\Exp \cE_\ell S_{\ell -1}S_{\ell -1}^*\cE_\ell ^*.$$
Taking the trace and conditioning on $S_{\ell -1}$ we get
\begin{align*}
\Tr \Exp S_\ell S_\ell ^* -\Tr \Exp S_{\ell -1}S_{\ell
-1}^*&=\Exp\Tr \cE_\ell S_{\ell -1}S_{\ell -1}^*\cE_\ell
^* =\Exp\Tr S_{\ell -1}S_{\ell -1}^*\cE_\ell
^*\cE_\ell\\ &=\Tr (\Exp S_{\ell -1}S_{\ell -1}^*)(\Exp
\cE_\ell ^*\cE_\ell )\leq \Tr (\Exp S_{\ell -1}S_{\ell
-1}^*)\Tr (\Exp \cE_\ell ^*\cE_\ell)
\end{align*}
In the last step we used that if $A, B$ are positive semidefinite matrices of the same dimension then $ \Tr AB\le \Tr A \Tr B$.
Using the (\ref{xidecomp}) and the bounds on $Z_\lambda, D_\lambda$ one gets that  $\Tr (\Exp \cE_\ell ^*\cE_\ell)\le c \sigma^2/n$ and it follows that
  for all  $n\geq n_0$, $0\leq \ell\leq n$  and $\la\in\Comp$ with $\abs{\la}\leq r_2$ we have
 $$\Exp[\Tr(S_\ell ^*S_\ell)]\leq C_1.$$
Note that $(S_\ell : 0\leq \ell\leq n)$ is martingale
analytic in the parameter $\la$. Then Lemma
\ref{analyticmartingales} implies that \eqref{boundSn}
holds for $S_\ell ^\la$ in instead of $M_\ell ^\la$ with probability $1-c/t$. To translate the
result  for $M_\ell ^\la$ we use
 the estimate
$$\Tr M_\ell M_\ell^*=\Tr \Xi^{*\ell}\Xi^{\ell}S_\ell S_\ell^*\leq  \Tr \Xi^{*\ell}\Xi^{\ell} \, \Tr S_\ell S_\ell^*\le C  \Tr S_\ell S_\ell^*.$$


To prove the second part of the theorem it is enough to show that if we assume
that $\Tr M_\ell M_\ell^*$ is bounded by $t$ uniformly in $\ell$ and $\lambda$ then (\ref{boundpsi}) holds.
 Let
$\psi$ be a normalized eigenvector of $H_n$ corresponding
to the eigenvalue $e=E+\frac{\la}{\rho n}\in
[E-\frac{R}{\rho n},E+\frac{R}{\rho n}]$ and let $\Psi_\ell
= \binom{\psi_{\ell+1}}{\psi_{\ell}}$. For  each $0\leq
k,\ell \leq n$, the transfer matrix description of the
eigenvalue equation gives $\Psi_k=M_k \Psi_0=M_k
(M_\ell)^{-1} \Psi_\ell$, $\psi_0=\psi_{n+1}=0$.
Since for the induced operator norm $\norm{\cdot}_{2,2}$ we have
$$\norm{A}_{2,2}=\sqrt{\lambda_{\max}(AA^*)}\le \sqrt{\Tr A A^*},$$
 we get the bound $\norm{M_\ell}_{2,2}< \sqrt{t}$.
But $M_\ell$ is a $2\times2$ matrix and  $\det M_\ell=1$ so $\norm{M_\ell}_{2,2}=\norm{M_\ell^{-1}}_{2,2}$. This leads to   $\norm{M_\ell^{-1}}_{2,2}< \sqrt{t}$ and
 $$\norm{\Psi_k}_2^2 \le \norm{M_k}_{2,2}^2\norm{M_\ell^{-1}}_{2,2}^2 \norm{\Psi_\ell}_2^2<t^2 \norm{\Psi_\ell}_2^2.$$
Summing
the last inequality over all $0\leq k \leq n$ gives $2 <
(n+1)t^2 \norm{\Psi_\ell}^2$ and summing over all $0\leq
\ell \leq n$ gives  $(n+1)\norm{\Psi_k}^2 < 2 t^2$.
\end{proof}

\subsection*{Proof of Corollary \ref{ConvergenceOfPoint} }
By (\ref{ev_cond}) and (\ref{Xn}) for each $n$, the
rescaled eigenvalues $\la_k$ are given by the zeros of the
random analytic function $g_n:\Comp\To\Comp$:
\begin{equation}\label{defgn}
g_n(\la):=\det \of{ Q_n^\la\bin{1}{0} , B_n }, \qquad
B_n:=T^{-n}\bin{0}{1}.
\end{equation}
Our assumption is that along a subsequence $n_j$, $B_n$
converges to a vector $B\in\Real^2$. It follows from
Theorem \ref{DiffusionTransfer} that for any fixed
$(\la_1,\cdots,\la_m)\in\Comp^m$, the random vector
$(g_{n_j}(\la_1),\cdots,g_{n_j}(\la_m))\in\Comp^m$
converges in distribution to a random vector
\begin{equation}\label{defg}
(g(\la_1),\cdots,g(\la_m)):=\of{\det \of{
Q^{\la_1/\tau}(\tau)\bin{1}{0} , B },\cdots,\det \of{
Q^{\la_m/\tau}(\tau)\bin{1}{0} , B }}
\end{equation}
where $(Q^{\la_1/\tau}(t),\cdots Q^{\la_m/\tau}(t))$ is the solution
to the SDE \eqref{LimitingTransfer}. We need to show that
the family of distributions in \eqref{defg}, indexed by
$(\la_1,\cdots,\la_m)\in \Comp^m$, defines a random
analytic function $g(\la)$ and that the mode of convergence
$g_{n_j}(\la)\To g(\la)$ is strong enough to ensure
convergence of zeros.

We will use the following notions of convergence. Let $\Fs$
denote the space of analytic functions from a connected
open set $D$ in $\Comp$ to $\Comp^d$. We equip $\Fs$ with
the metric
\[
d(f,g):=\sum_{r=1}^{\infty}
2^{-r}\frac{\norm{f-g}_r}{1+\norm{f-g}_r}, \quad
\textup{where}\quad\norm{h}_r:=\max_{z\in D\cap
\{|z|<r\}}\norm{h(z)}.
\]
Then $(\Fs,d)$ is a complete separable metric space and
convergence in $d$ is the local uniform convergence. A
random analytic function in $\Fs$ is a measurable mapping
$\omega\To f^\omega$ from a probability space
$(\Omega,\cF,P)$ to $(\Fs,\cB)$, where $\cB$ is the Borel
$\sigma$-field generated by the metric $d$. The law of $f$
is the induced probability measure $\rho_f$ on
$(\Fs,\cB_d)$. A sequence $f_\ell ^\omega$ of random
analytic functions is said to converge in law to a random
analytic function $f^\omega$ if $\rho_{f_\ell }\To \rho_f $
in the usual sense of weak convergence.
\begin{prop}\label{convergence_analytic_functions} Suppose
\newline\noindent{\rm(1)}
$f_\ell ^\omega$ is a sequence of random analytic functions
in $\Fs$ such that for every $0<r<\infty$,
\begin{equation}\label{general_tightness}
\lim_{C\To\infty}\Prob\set{\sup_{\ell \geq 1,\abs{\la}\leq
r} \abs{f_\ell (\la)}>C}=0.
\end{equation}
\newline\noindent{\rm(2)} for each $m\geq 1$ and $\tla=(\la_1,\la_2,\cdots,\la_m)\in \Comp^m$ there is a probability distribution $\nu^{\tla}$ on $\Comp^{m}$ and the random vector
$(f_\ell ^\omega(\la_1),f_\ell ^\omega(\la_2),\cdots,f_\ell
^\omega(\la_m))\in\Comp^m$ converges in law to
$\nu^{\tla}$.

Then there is a random analytic function $f^\omega$ in
$\Fs$  such that $f_\ell ^\omega$ converges in law to
$f^\omega$. Moreover for each
$\tla=(\la_1,\la_2,\cdots,\la_m)\in \Comp^m$,
$(f^\omega(\la_1),f^\omega(\la_2),\cdots,f^\omega(\la_m))\in\Comp^m$
has distribution $\nu^\tla$.
\end{prop}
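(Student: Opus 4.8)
The plan is to establish tightness of the laws $\rho_{f_\ell}$ in the Polish space $(\Fs,d)$, extract weak subsequential limits by Prohorov's theorem, identify the finite-dimensional distributions of any such limit with the prescribed family $\nu^{\tla}$, and then use the fact that a random analytic function is determined in law by its finite-dimensional distributions to upgrade subsequential convergence to convergence of the whole sequence. Along the way the only genuine analytic input is Montel's theorem.

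\emph{Tightness.} First I would fix $r$ and $C$ and work on the event $A_{r,C}=\set{\sup_{\ell\ge 1,\,\abs{\la}\le r}\abs{f_\ell(\la)}\le C}$. On $A_{r,C}$ every $f_\ell$ is analytic and bounded by $C$ on $D\cap\{|z|<r\}$, so by Cauchy's integral formula the restrictions of the $f_\ell$ to $D\cap\{|z|<r-\delta\}$ have derivatives bounded in terms of $C$ and $\delta$; hence this family is uniformly bounded and equicontinuous, so by the Arzel\`a--Ascoli theorem (equivalently Montel's theorem) it is precompact in the local uniform topology there. Exhausting $D$ by the sets $D\cap\{|z|<r\}$, $r=1,2,\dots$, and diagonalizing, hypothesis (1) then yields that for every $\eps>0$ there is a compact set $K_\eps\subset(\Fs,d)$ with $\Prob\{f_\ell\in K_\eps\}\ge 1-\eps$ for all $\ell$. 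Thus $\{\rho_{f_\ell}\}$ is tight, and every subsequence has a further subsequence along which $\rho_{f_\ell}$ converges weakly to some probability measure $\rho$ on $(\Fs,\cB)$.

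\emph{Identifying and pinning down the limit.} For each $\tla=(\la_1,\dots,\la_m)$ the evaluation map $E_{\tla}\colon\Fs\to\Comp^m$, $f\mapsto(f(\la_1),\dots,f(\la_m))$, is continuous for $d$, since local uniform convergence implies pointwise convergence. By the continuous mapping theorem, if $\rho$ is a weak subsequential limit of $\rho_{f_\ell}$ then the pushforward of $\rho$ under $E_{\tla}$ is the weak limit of the laws of $E_{\tla}(f_\ell)$, which by hypothesis (2) equals $\nu^{\tla}$; so every subsequential limit has finite-dimensional distributions exactly $\nu^{\tla}$. Next I would observe that for any countable dense $D_0\subset D$ one has $\norm{h}_r=\sup\set{\norm{h(\la)}:\la\in D_0,\ \abs{\la}<r}$ by continuity, so the metric $d$ — and hence every Borel set of $\Fs$ — is a measurable function of the countable family of evaluations $\{E_{\la}:\la\in D_0\}$; the finite cylinder sets $E_{\tla}^{-1}(B)$ with $\tla\in D_0^{\,m}$, $B$ Borel, form a $\pi$-system generating $\cB$, so by Dynkin's $\pi$--$\lambda$ theorem a probability measure on $(\Fs,\cB)$ is uniquely determined by its finite-dimensional distributions. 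Hence all subsequential limits coincide; calling the common limit $\rho_f$, we obtain $\rho_{f_\ell}\Rightarrow\rho_f$ along the full sequence, and any $f\sim\rho_f$ is a random analytic function with $E_{\tla}(f)\sim\nu^{\tla}$, as required. (Consistency of $\{\nu^{\tla}\}$ under projections and permutations is automatic, being inherited from the genuine finite-dimensional laws of the $f_\ell$.)

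\emph{Main obstacle.} The only substantive point is the tightness step: converting the uniform-on-compacts bound in probability (1) into honest tightness in $(\Fs,d)$. This hinges on normality of bounded families of analytic functions (Montel), so that condition (1) genuinely confines $f_\ell$ to a compact subset of $\Fs$ with high probability; the Cauchy derivative estimate and the diagonal exhaustion of $D$ are the concrete ingredients. The remaining steps are soft, but one must be slightly careful to verify that $\cB$ is generated by countably many evaluation maps, so that ``finite-dimensional distributions determine the law'' is a legitimate statement on this infinite-dimensional space.
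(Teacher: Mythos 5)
Your proposal is correct and follows essentially the same route as the paper: tightness of $\rho_{f_\ell}$ in $(\Fs,d)$ via Montel and Prokhorov, subsequential weak limits obtained by a diagonal exhaustion of $D$, and identification/uniqueness of the limit from the prescribed finite-dimensional distributions. The paper states this argument more tersely (in particular the uniqueness step is just asserted), while you spell out the $\pi$--$\lambda$ argument showing that evaluations at a countable dense set generate the Borel $\sigma$-field; this fills in a detail the paper leaves implicit but does not change the method.
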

\begin{proof} For each disk $D_r:=\set{\la\in\Comp:\abs{\la}<r}$, the bound in \eqref{general_tightness} together with Montel's and Prokhorov's theorems imply that a subsequence of $f_\ell $ restricted to $D_r$ converges in law to a random analytic function $f_r$ on $D_r$. Then by a diagonal argument, there is a subsequence of $f_{\ell _k}$ such that for each integer $r$, the restriction of $f_{\ell _k}$ to $D_r$ converges to to random analytic function $f_r$ on $D_r$. The distributions of the functions $f_r$ are consistent with respect to restricting to smaller discs, and thus there is a random analytic function $f$ on $\Comp$ such that $f_{\ell _k}\To f$ in law. Condition $(2)$ is strong enough to ensure that $f$ is unique and thus $f_\ell \To f$.
\end{proof}

Let $\cA=\cA(\Comp,\Comp)$ and
$\cA_0:=\cA\backslash\set{0}$, i.e. we discard the
identically zero function. $\cM$ denotes the set of
nonnegative Borel measures on $\Comp$, that are finite on
bounded subsets of $\Comp$. We consider the local weak
topology on $\cM$: a sequence $\mu_\ell \in\cM$ is said to
converge to $\mu\in\Comp$ if for every continuous function
$\psi:\Comp\To\Real$ of compact support, $\int \psi
d\mu_\ell \To\int \psi d\mu$. For $f\in \cA_0$, we denote
by $\mu_f$ the zero counting measure of $f$, i.e.
$\mu_f=\sum_{f(z)=0}m(z)\delta(z)$, where $m(z)$ is the
multiplicity of the zero $z$. As an elementary consequence
of Cauchy's integral formula, we have that for $f_\ell
,f\in \cA_0$, $d(f_\ell ,f)\To 0$ implies $\mu_{f_\ell }\To
\mu_f$.

A random measure in $\cM$ is a measurable function
$\omega\To\mu^\omega$ to $\cM$ (with the Borel
$\sigma$-algebra).

If $f^\omega$ is a random analytic function in $\cA$ with
$\Prob(f\equiv 0)=0$, then $\mu_{f^\omega}$ is a random
measure in $\cM$. If $f_\ell ^\omega$ converges in law to
$f^\omega$ and $\Prob(f_\ell \equiv0)=\Prob(f \equiv 0)=0$, then the corresponding random
measure $\mu_{f_\ell ^\omega}$ converges in law to
$\mu_{f^\omega}$.

We can now complete the proof of Corollary
\ref{ConvergenceOfPoint}. The appropriate part of Theorem
\ref{theoremE=0} can be proved the same way.
\begin{proof}[Proof of Corollary \ref{ConvergenceOfPoint}]
The tightness \eugene{bound \eqref{boundSn}}, Theorem
\ref{DiffusionTransfer} and Proposition
\ref{convergence_analytic_functions}  guarantee that the
random analytic function $\Comp\ni\la\To Q^{\la}_n\in
M_2(\Comp)$ converges in distribution to the random
analytic function $\Comp\ni\la\To Q^{\la/\tau}(\tau)\in M_2(\Comp)$
as $n\To\infty$. Then the random analytic function
$g_{n_j}(\la)$ defined in \eqref{defgn} converges to the
random analytic function $g(\la)$ defined in \eqref{defg}.
In addition it is easy to see that $\Prob(g_n \equiv0)=\Prob(g \equiv 0)=0$. Thus $\mu_{g_{n_j}^\omega}$
converges in law to $\mu_{g^\omega}$.
\end{proof}

\subsection*{The phase function}

\begin{proof}[Proof of Corollary \ref{limitPointProcess}]
The existence and uniqueness of the analytic solution of
(\ref{SDEphase}), as well as the monotonicity of
$\varphi^\la(\tau)$ will be shown in Section \ref{s:sde}.

\newcommand{\et}{e^{i\theta}}

To prove the second part of the theorem we will first
assume that $z^{n_j+1}$ converges to $e^{i \theta}$. Then
$T^{n_j}(E)$ converges to a matrix $\tilde T$ and
\[
\tilde T Z=\lim T^{n_j}(E)  Z=Z\lim \mat{\bar
z^{n_j}}{0}{0}{z^{n_j}}
 = \mat{\bar z}{z}{1}{1}
  \mat{z e^{-i \theta}}{0}{0}{\bar z e^{i\theta}}
 = \mat{e^{-i
\theta}}{e^{i \theta}}{z e^{-i \theta}}{\bar z e^{i\theta}}.
\]
By Corollary \ref{ConvergenceOfPoint} we need to identify
the zeros of $$[\tilde T Q^\lambda(\tau)]_{11}=[\tilde T Z
\tilde X^\lambda(\tau) Z^{-1}]_{11}=[\tilde T Z
X^\lambda(\tau) ]_{11}$$ where $\tilde X^\lambda$ satisfies
the SDE (\ref{XSDEnewtime}). By linearity, $X:=\tilde
XZ^{-1}$ satisfies the same SDE, but with initial condition
$X^\la(0)=Z^{-1}$.

Note that if $\lambda\in \Real$ then $X^{\la}$ is a matrix
of the form $\mat{a}{b}{\bar a}{\bar b}$. Indeed, this
holds for $t=0$ and it is preserved by the evolution by
\eqref{XSDE}. So we have
\begin{equation}\label{11term}
[\tilde T Z X^{\la}(\tau)]_{11}=e^{-i \theta}
X^{\la}(\tau)_{11}+e^{i \theta}  X^{\la}(\tau)_{21}=2 \re
\left[e^{-i \theta}  X^{\la}(\tau)_{11}   \right].
\end{equation}
We rewrite the SDE's for the matrix entries as follows:
\begin{align}\label{SDEa}
2dX_{11}&=i \lambda X_{11} dt+i X_{11} d\cB+\bar X_{11} d\cW, \quad X_{11}(0)=i \rho/2,\\
2dX_{12}&=i \lambda X_{12} dt+i X_{12} d\cB+ \bar X_{12}
d\cW, \quad X_{12}(0)=i z \rho/2.\label{SDEb}
\end{align}
It\^o's formula gives
\[
d \det \tilde X^\la=0\qquad \textup{and} \qquad 2i
\,\im\left[X_{11} \bar X_{12}\right]=\det{\tilde
X^\la(t)}=\det Z^{-1}=i \rho/2
\]
which shows that $X_{11}^\la(t)$ is never equal to 0,  and
the phase function $\varphi^\la(t)$ is well-defined via
\begin{equation}\nonumber
e^{i\varphi^\la(t)}= \frac{i X_{11}^\lambda(t)}{\overline{i
X_{11}^\lambda(t)}}, \qquad \varphi^\lambda(0)=0.
\end{equation}
 It\^o's
formula applied to \eqref{SDEa} shows that $\varphi$
satisfies \eqref{SDEphase} with $- d\cW$ in place of
$d\cW$. Also, the zeros of \eqref{11term} are given by the
solutions of $\re[e^{-i \theta-i\pi/2}
iX^{\la}(\tau)_{11}]=0$, or, equivalently, $- \theta-\pi/2+
\varphi^\la(\tau)/2 \in\pi
\Z $. 

So far we have shown that if $z^{n_j+1}\to e^{i \theta}$
then
$$\Lambda_{n_j}\Rightarrow \set{\lambda:\varphi^{\la/\tau}(\tau)\in 2\theta+\pi+2\pi \Z}\eqd\sch+2\theta+\pi$$
where the last equality follows by the definition
\eqref{e:schdef} and Lemma \ref{invariance}. It follows that
\[\Lambda_n-2\arg(z^{n+1})-\pi\Rightarrow \sch.\]
Since $2\arg(z^{n+1})-\arg(z^{2n+2})$ is either 0 or $2\pi$ and $\sch\eqd \sch+2\pi$ the statement of the corollary follows.
\end{proof}

\section{The limit theorem for the the decaying model}\label{s:decaying}

In this section we discuss Theorems \ref{thmdecay1} and \ref{limitPointProcessDecayingCase}. The proof of Theorem \ref{thmdecay1} can be done exactly the same way as that of Theorem \ref{DiffusionTransfer}. Since we only need to prove the convergence in an interval $[0,1-\eps]$ for a given $\eps>0$, the fact that the coefficient of the noise term blows up at $t=1$ will not cause any problems.

Theorem \ref{limitPointProcessDecayingCase} can be proved the way (\ref{sineb}) was derived for the $\beta$-Hermite ensemble in \cite{VV}. The proof that we present below is not fully self-contained, we only highlight the main points of the arguments.

\begin{proof}[Proof of Theorem \ref{limitPointProcessDecayingCase}]
As an analogue of the continuous time phase function we define the discrete phase function $\varphi^{\la}_\ell $ with the identity $e^{i \varphi^{\la}_\ell }={X_\ell ^{\la}[1,1]}/{X_\ell ^{\la}[2,1]}$ and the relative phase function $\alpha^{\lambda}_\ell $ as $\varphi_\ell ^\la-\varphi_\ell ^0$. Note that $\varphi^{\lambda}_\ell $ can be defined as a continuous  function in $\la$ for any fixed $n$ which will make  $\alpha_\ell ^{\la}$ a well defined function. Equation (\ref{Xdef}) can be converted to a recursion for $e^{i \varphi_{\ell }^\la}$:
\begin{equation}\label{phirec}
e^{i \varphi^\la_\ell }=\mathcal{T}_\ell ^\la(e^{i \varphi^\la_{\ell -1}})
\end{equation}
where
\begin{equation}\mathcal{T}_\ell ^\la(v)=z^{2\ell} {\mathcal{X}^\la_\ell }(z^{-2\ell} v),\qquad
\mathcal{X}_\ell ^\la(\xi)=\frac{\xi(1+i \rho \eps_\ell/2 )+i \rho \eps_\ell/2 }{1-i \rho \eps_\ell/2 -i \rho \eps_\ell  \xi/2}.\label{phirec2}
\end{equation}
$\lambda$ is an eigenvalue if
\[
X_n^\la \vect{1}{0}=c Z^{-1} T^{-n} \vect{0}{1}=\frac{c i \rho}{2} \vect{-z^{n+1}}{\bar z^{n+1}}
\]
which is equivalent to $e^{i \varphi_n^\la}=-z^{2n+2}$.
The discrete version of the Sturm-Liouville theory implies that  the number of  eigenvalues in a given interval $[\lambda_1,\lambda_2]$ is given by the number of solutions of $e^{i x}=-z^{2n+2}$ with  $x\in[e^{ i \varphi_n^{\lambda_1}},e^{ i \varphi_n^{\lambda_2}}] $.
We can also count the eigenvalues using intermediate values of the phase function $\varphi^\la$. We define
$\tph^{\la}_k$ as a continuous function in $\lambda$ recursively using
\begin{equation*}
e^{i \tph_{0}^{\la}}=-z^{2n+2}, \qquad e^{i \tph_{k+1}^{\la}}=\left[\mathcal{T}_{n-k}^\la\right]^{-1}(e^{i \tph_{k}^{\la}}).
\end{equation*}
Then  the number of  eigenvalues in a given interval $[\lambda_1,\lambda_2]$ is given by
\begin{equation}\label{evcount}
\#\Big(\left[(\varphi_{n-k}^{\lambda_1}-\tph_{k}^{\lambda_1}),(\varphi_{n-k}^{\lambda_2}-\tph_{k}^{\lambda_2})\right]\cap 2\pi \Z\Big).
\end{equation}
The main steps of the theorem are as follows. The first step is straightforward from Theorem \ref{thmdecay1}.
\begin{step}
For every $0<\eps<1$ we have $\alpha^{\la}_{\lfloor n(1-\eps\rfloor)}\Rightarrow \alpha^{\la}(1-\eps)$ in the sense of finite dimensional distributions where $\alpha^\la(t)$ is the solution of SDE (\ref{SDErelphasedecay}).
\end{step}
The next step shows that the relative phase function $\alpha^\la$ cannot change too much from $n(1-\eps)$ to $n-k$.
\begin{step}\label{osc} There exist a constant $c>0$ depending only on $\sigma, \rho$ and $\La$ so that for every $|\la|\le\La$ and $k\le \eps n$ we have
\begin{equation}\label{alphabnd}
\Exp\left[(\alpha^\la_{\lfloor n(1-\eps\rfloor)}, \alpha^\la_{n-k})\wedge 1\right]\le c\of{
\Exp\,\, \textup{dist}(\alpha^\la_{\lfloor n(1-\eps\rfloor)},2\pi \Z)+\eps^{1/2}+n^{-1/2}+k^{-1}
}.
\end{equation}
\end{step}
The proof of Step \ref{osc} can be done in a similar way as in \cite{VV}. By analyzing the recursion (\ref{phirec}) we can get a precise estimate on $\Exp(\alpha_{\ell +1}^\la|\varphi_{\ell }^\la, \varphi_{\ell }^0)$. This can be turned into a Gronwall type estimate for $\textup{dist}(\alpha^\la_\ell ,\pi \Z)$ which leads to (\ref{alphabnd}). (See Sections 6.1 and 6.2 in \cite{VV} for details.) In order to estimate certain error terms one can take advantage of the fact that the rotation $z^{2\ell}$ in (\ref{phirec2}) has an averaging effect:
\[
|\sum_{\ell =\ell_1}^{\ell_2}  z^{2\ell} a_\ell |\le C (|a_{\ell _1}|+\sum_{\ell =\ell_1}^{\ell_2-1} |a_{\ell +1}-a_{\ell }| )
\]
We would like to note that this makes our case a lot easier to deal with than the  one in \cite{VV} where the dependence of the oscillation on $\ell$ was more complicated and needed much more involved estimates using harmonic analytic tools.

The next step shows that asymptotically in the formula (\ref{evcount}) only $\alpha_{n-k}^\la$ `matters'. The proof is analogue to the one presented in Sections 6.3 and 6.4 in \cite{VV}.
\begin{step}
If $k=k(n)\to \infty$ with $k/n\to 0$ then $\varphi_{n-k}^0$ converges to a uniform random variable on $[0,2\pi]$ modulo $2\pi$ in distribution. If $k$ is fixed then $\tilde \varphi_k^{\la}-\tilde \varphi_k^{0}\to 0$ in probability.
\end{step}

Now we have all the ingredients for the proof. Suppose that we want to show that for a given vector $(\la_1,\dots, \la_d)$ we have
\[
\textup{the number of ev's in } [0,\la_1], [0,\la_2],\dots, [0,\la_d]\Rightarrow (\alpha^{\la_1}(1), \dots, \alpha^{\la_d}(1)).
\]
By the previous statements we can find an appropriate sequence $k=k(n)\to \infty$ so that
\[
(\alpha_{n-k}^{\la_i}, i=1,\dots, d) \Rightarrow (\alpha^{\la_i}(1), i=1,\dots,d)\]
and $\tilde \varphi^{\la_i}_k-\tilde \varphi^0_k\to 0$ in probability for $i=1,\dots, d$. This means that if we apply formula (\ref{evcount}) with $\lambda_1=0$, $\lambda_2=\lambda_i$ then the
the length of the interval  will converge to $\alpha^{\la_i}(1)\in 2\pi \Z$ and the endpoint will become uniform modulo $2\pi$. Hence the number of lattice points in the $i^{th}$ interval will converge to $\frac1{2\pi} \alpha^{\la_i}(1)$ which proves the theorem along the found subsequence. But the argument can be repeated to find a converging sub-subsequence of any subsequence, and since we always get the same limit this shows the weak convergence along the original (full) sequence as well.
\end{proof}

\section{Appendix}\label{s:appendix}

The appendix contains the proof for the existence of unique analytic solutions for the discussed SDEs and a technical proposition about the convergence of discrete time Markov chains to stochastic differential equations.

\subsection*{Uniqueness and analyticity of the limiting SDE's}\label{s:sde}

\begin{prop}\label{SDEproperties}
The stochastic differential equations (\ref{LimitingTransfer}), (\ref{SDEphase}), (\ref{LimitingTransfer0}), (\ref{SDEphase0}), (\ref{LimitingTransferDecay}), (\ref{SDEphaseDecay}) all have unique
strong solutions which are analytic in $\lambda$. Moreover the solutions of  (\ref{SDEphase}),  (\ref{SDEphase0}) and (\ref{SDEphaseDecay}) are strictly increasing in $\lambda$ for any positive $t$.
\end{prop}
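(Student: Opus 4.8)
The plan is to settle the matrix-valued equations \eqref{LimitingTransfer}, \eqref{LimitingTransfer0} and \eqref{LimitingTransferDecay} first, and then to obtain the scalar phase equations \eqref{SDEphase}, \eqref{SDEphase0} and \eqref{SDEphaseDecay} from them exactly as in the proof of Corollary~\ref{limitPointProcess}; the strict monotonicity in $\la$ will then be read off from the linear equation satisfied by $\partial_\la\varphi^\la$.

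Each of the three matrix equations has the form
\[
d\X^\la=A^\la\,\X^\la\,dt+\sum_k g(t)\,B_k\,\X^\la\,dW_k,\qquad
A^\la=\tfrac12\,Z\mat{i\la}{0}{0}{-i\la}Z^{-1},
\]
with constant $2\times 2$ matrices $B_k$, and $g\equiv1$ in the critical case, $g(t)=\sigma\rho/\sqrt{1-t}$ (bounded on each $[0,1-\eps]$) in the decaying case. The coefficients are affine in $\la$ and, on every compact time interval, locally Lipschitz with linear growth in $\X$, so the standard It\^o theory supplies a unique strong solution with finite moments of every order, on $[0,\infty)$ in the critical case and, by patching the intervals $[0,1-\eps]$, on $[0,1)$ in the decaying case. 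For analyticity in $\la$ I would expand $\X^\la=\sum_{m\ge0}\la^m Y_m$: substituting, $Y_0$ solves the $\la=0$ equation and, for $m\ge1$, the coefficient $Y_m$ satisfies the \emph{same} linear homogeneous equation forced by $\tfrac i2 Z\mat{1}{0}{0}{-1}Z^{-1}Y_{m-1}\,dt$. This triangular system has unique strong solutions, and Gronwall's inequality combined with the Burkholder--Davis--Gundy inequality yields a bound of the form $\Exp\sup_{t\le T}\norm{Y_m(t)}^2\le C_T^m/m!$ (the nested time integrals produce the $1/m!$), so that $\sum_m\la^m Y_m(t)$ converges in $L^2$ locally uniformly in $\la\in\Comp$ and, after passing to a suitable version, almost surely locally uniformly; hence it is a random analytic function of $\la$, and term-by-term manipulation (justified by the $L^2$ convergence) shows it solves the matrix SDE. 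Uniqueness for each fixed $\la$ identifies it with $\X^\la$.

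Given the matrix solution, I would treat the phase functions as in the proof of Corollary~\ref{limitPointProcess}. It\^o's formula applied to $\det\X^\la$ shows that $\det\X^\la(t)$ is a nonzero constant, so in \eqref{SDEa} the entry $X_{11}^\la(t)$ never vanishes; thus $\varphi^\la(t)$, the continuous branch (started at $0$) of the argument of $iX_{11}^\la(t)$ --- equivalently of $-X_{11}^\la(t)/X_{21}^\la(t)$ for real $\la$, a finite nonzero ratio of entries analytic in $\la$ --- is well defined and inherits analyticity in $\la$. It\^o's formula then turns \eqref{SDEa} into \eqref{SDEphase}, and the cases $E=0$ and the decaying model are handled identically, producing \eqref{SDEphase0} and \eqref{SDEphaseDecay}; uniqueness of these scalar equations also follows directly, since the coefficients $e^{-i\varphi}$, $\cos\varphi$, $\sin\varphi$ are bounded and Lipschitz in $\varphi$.

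Finally, differentiating each phase SDE in $\la$ (legitimate by the power-series representation above), one finds that $\varpi^\la:=\partial_\la\varphi^\la$ solves a \emph{linear} equation $d\varpi^\la=dt+\varpi^\la\,d\Xi^\la$ in which the noise enters only through the factor $\varpi^\la$, where $\Xi^\la$ is a continuous semimartingale (for \eqref{SDEphase} this is \eqref{sde2}, with $\Xi^\la=\cB_1$; the other two cases are analogous, $\Xi^\la$ carrying an additional bounded-variation part). Solving by the integrating factor $\cE^\la(t)=\exp\of{\Xi^\la(t)-\tfrac12\langle\Xi^\la\rangle_t}>0$ gives $\varpi^\la(t)=\cE^\la(t)\int_0^t\cE^\la(s)^{-1}\,ds>0$ for every $t>0$ in the domain, which is the asserted strict monotonicity. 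The main obstacle is the analyticity step --- upgrading ``a strong solution for each fixed $\la$'' to ``a jointly analytic random function of $\la$'', and thereby legitimizing the term-by-term differentiation used for monotonicity --- which comes down to establishing the factorial moment bound $\Exp\sup_{t\le T}\norm{Y_m(t)}^2\le C_T^m/m!$ for the Taylor coefficients; everything else is routine.
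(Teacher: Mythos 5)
Your overall structure matches the paper's: unique strong solutions from Lipschitz coefficients, analyticity in $\la$, then the phase SDE and the linear equation for $\partial_\la\varphi^\la$ solved by an integrating factor to get strict monotonicity. The genuine difference is in the analyticity step. The paper avoids an explicit series construction: it invokes Protter's theorem on $C^N$ dependence of SDE solutions on the initial condition, treats $(\re\la,\im\la)$ as two extra frozen coordinates of the initial point (with $d\re\la=d\im\la=0$), and then observes that the Cauchy--Riemann relation $\partial_x \X=i\,\partial_y \X$ holds at $t=0$ and is propagated because both sides satisfy the same linear SDE. Your route instead expands $\X^\la=\sum_m\la^m Y_m$, solves the resulting triangular system of linear SDEs, and bounds $\Exp\sup_{t\le T}\norm{Y_m}^2$ by iterating a Gronwall/BDG estimate, the nested time integrals producing the $C_T^m/m!$ decay. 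This is more elementary and self-contained, but two small points deserve to be made explicit: (i) the passage from $L^2$ locally-uniform convergence of the series to almost sure locally uniform convergence in $\la$ needs a Chebyshev/Borel--Cantelli argument using the factorial decay (you gesture at ``a suitable version'' but the step should be spelled out, since it is exactly what makes $\la\mapsto\X^\la(t)$ a bona fide random analytic function); and (ii) the scalar phase SDEs cannot be attacked by the same power-series expansion because their $\la$-dependence is not affine through the coefficients, so, as you correctly do, one must recover $\varphi^\la$ indirectly as a continuous argument branch of the never-vanishing analytic entry $X_{11}^\la(t)$. The monotonicity argument is essentially the one the paper gives: the text mentions a coupling, but the corollary that follows records exactly your integrating-factor representation $\varpi^\la(t)=\cE^\la(t)\int_0^t\cE^\la(s)^{-1}\,ds>0$.
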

\begin{proof}
The coefficients of these SDE's are all uniformly Lipschitz so they have unique strong solutions for any  finite vector $\underline \lambda\in\Comp^d$. (Note that in the decaying case one may assume $t\in[0,1-\eps]$.)


In order to show that one can realize these solutions for all values of $\lambda$ together in a way that the dependence on $\lambda$ is analytic requires some extra work.

One possibility to deal with this problem is to use the
smooth dependence of the solution of an SDE on the initial
condition. We will use the following theorem which is a
slight modification of Theorem 40 in
\cite{Protter}.\begin{thm}[Protter, Theorem
40]\label{t:potter} Let $f_\alpha^i: \Real^d\to \Real, 1\le
i \le d, 0\le \alpha\le m$ be functions with locally
Lipschitz derivatives up to order $N$ for some $0\le N \le
\infty$. Then there exists a solution $X(t, \omega, x)$ to
\begin{equation}\label{qqq}
X_t^i=x_i+\int_0^t f^i_0(X_s) ds+\sum_{\alpha=1}^m \int_0^t f_\alpha^i (X_s) dB_s^\alpha, \qquad i=1,\dots, d
\end{equation}
which is $N$ times continuously differentiable in the open set $\{x: \zeta(x,\omega)>t\}$ where $\zeta$ is the explosion time of the solution. Moreover the respective derivatives in $x$ will satisfy the formal derivative of equation (\ref{qqq}).
\end{thm}
We can encode the dependence on $\lambda$ in (\ref{LimitingTransfer}) into dependence on initial condition by introducing  extra variables for $\re \lambda, \im \lambda$ and the extra equations $d \re \lambda=0, d \im \lambda=0$. Since for any fixed $\lambda$ the SDE has globally Lipschitz coefficients we will have $\zeta=\infty$. This shows that there exists a solution to (\ref{LimitingTransfer}) which is twice differentiable in the real variables $(x,y)=(\re \lambda, \im \lambda)$. The fact that we also get analyticity in $\lambda\in\Comp$ follows from the fact that the Cauchy-Riemann equations are satisfied. Indeed, at time $t=0$ we have $\partial_x X(t)=i \partial_y X(t)$ and it can be checked that the two processes satisfy the same SDE which means that the previous equation is preserved.

The same proof works for (\ref{SDEphase}), (\ref{LimitingTransfer0}), (\ref{SDEphase0}). In the case of (\ref{LimitingTransferDecay}) and (\ref{SDEphaseDecay})  the coefficients depend on $t$ as well, but introducing an extra variable for $t$ takes care of this (note that in this case $t\in [0,1)$).

To prove that the solution $\varphi^\lambda(t)$ of (\ref{SDEphase}) is increasing in $\lambda\in \Real$ we first compute the SDE for its derivative.
\begin{equation}\label{SDEphaseder}
d\of{\partial_\lambda \varphi^\lambda(t)}=dt+ \re \left[- i
\partial_\lambda \varphi^\lambda(t) e^{-i
\varphi^\lambda(t)} d\cW\right]=dt+ \,\partial_\lambda
\varphi^\lambda(t) \, \im \left[  e^{-i \varphi^\lambda(t)}
d\cW\right], \quad \partial_\lambda \varphi^\lambda(0)=0.
\end{equation}
For a given $\lambda$ the derivative solves the SDE
\begin{equation}\label{aSDE}
da=dt+\frac1{\sqrt{2}} a d\cB_\lambda, \quad a(0)=0
\end{equation}
with $d\cB_\lambda=\sqrt{2} \im \left[
e^{-i \varphi^\lambda(t)}, d\cW\right]$ and a simple
coupling argument shows that this is always positive for
$t>0$. (Actually, in this case one can even solve the SDE
explicitly.) Similar proof works for (\ref{SDEphase0}) and
(\ref{SDEphaseDecay}). Note that using the carousel
representation of the Section \ref{s:carousel} one can also
prove the monotonicity for (\ref{SDEphase})  and
(\ref{SDEphaseDecay}).
\end{proof}
\begin{cor}
For a given $\lambda\in \Real$ the distribution of $\partial_\lambda \varphi^\lambda(t)$ is the same as
\[
\int_{0}^t e^{-\frac{1}{\sqrt{2}} (\cB_s-\cB_t)+\frac14(s-t)} ds=e^{\frac{1}{\sqrt{2}} \cB_t-\frac14 t} \int_{0}^t e^{-\frac{1}{\sqrt{2}} \cB_s+\frac14 s} ds.
\]
\end{cor}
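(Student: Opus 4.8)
The plan is to solve explicitly the linear stochastic differential equation satisfied by the derivative process $a(t):=\partial_\lambda\varphi^\lambda(t)$, and then read off the law. Recall from the proof of Proposition~\ref{SDEproperties} (see \eqref{SDEphaseder}--\eqref{aSDE}) that for a fixed $\lambda\in\Real$ the derivative satisfies
\[
da= dt+\tfrac{1}{\sqrt2}\,a\,d\cB_\lambda,\qquad a(0)=0,
\]
where $d\cB_\lambda=\sqrt2\,\im\bigl[e^{-i\varphi^\lambda(t)}d\cW\bigr]$. First I would note that $\cB_\lambda$ is a standard real Brownian motion: it is a continuous local martingale and, since $e^{-i\varphi^\lambda(t)}d\cW$ has the same modulus as $d\cW$, its real and imaginary parts each accumulate quadratic variation $\tfrac12\,dt$ with vanishing cross-variation, so $\langle\cB_\lambda\rangle_t=2\cdot\tfrac12 t=t$ and Lévy's characterization applies. (This is already used implicitly in the proof of Proposition~\ref{SDEproperties}.)

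Next I would solve the linear equation by the integrating-factor method. Set $h(t):=\exp\bigl(\tfrac1{\sqrt2}\cB_\lambda(t)-\tfrac14 t\bigr)$; by It\^o's formula $dh=\tfrac1{\sqrt2}h\,d\cB_\lambda$ with $h(0)=1$, the drift $-\tfrac14 t$ being the It\^o correction $-\tfrac12\cdot\tfrac12\,dt$. Put $a(t):=h(t)\int_0^t h(s)^{-1}\,ds$. Since $\int_0^t h(s)^{-1}ds$ has finite variation, the product rule gives $da=\bigl(\int_0^t h(s)^{-1}ds\bigr)dh+h(t)\cdot h(t)^{-1}dt=\tfrac1{\sqrt2}a\,d\cB_\lambda+dt$, and $a(0)=0$; by strong uniqueness for this (globally Lipschitz) SDE, this process is indeed $\partial_\lambda\varphi^\lambda$. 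Hence almost surely
\[
\partial_\lambda\varphi^\lambda(t)= e^{\frac1{\sqrt2}\cB_\lambda(t)-\frac14 t}\int_0^t e^{-\frac1{\sqrt2}\cB_\lambda(s)+\frac14 s}\,ds,
\]
and pulling the $t$-dependent prefactor inside the integral rewrites this as $\int_0^t e^{-\frac1{\sqrt2}(\cB_\lambda(s)-\cB_\lambda(t))+\frac14(s-t)}\,ds$; the two displayed right-hand sides in the corollary are thus pathwise equal. Since $\cB_\lambda\eqd\cB$ as standard Brownian motions, the claimed identity in distribution follows.

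There is no substantial obstacle here: the statement is essentially a computation. The only points requiring a line of care are the Lévy argument identifying $\cB_\lambda$ as a standard Brownian motion and the It\^o verification that the explicit formula solves \eqref{aSDE}, after which strong uniqueness closes the argument.
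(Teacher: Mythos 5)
Your proof is correct and takes essentially the same route as the paper: the paper's one-line argument is precisely that, by It\^o's formula, the displayed process satisfies the SDE \eqref{aSDE}, so uniqueness gives the claim. You have simply expanded the details — the integrating-factor derivation, the It\^o check for $h$, and the L\'evy characterization of $\cB_\lambda$ as a standard Brownian motion justifying the equality in distribution — all of which the paper leaves implicit.
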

\begin{proof}
Using It\^o's formula it is straightforward to check that the process given in the statement of the corollary satisfies the SDE (\ref{aSDE}).
\end{proof}

\subsection*{Convergence of discrete time Markov processes to SDEs}
\begin{prop}\label{convergence} Fix $T>0$ and for each $n\geq 1$ consider a Markov chain
$$\of{X^n_\ell \in \Real^d, \ell=0\dots {\left \lfloor nT \right \rfloor }},$$
with $\Exp \norm{X^n_\ell}^2<\infty.$
For $x\in\Real^d$, let $Y^n_\ell (x) \in \Real^d$ be distributed as the increment of $X^n_{\ell +1}-X^n_\ell $ given
$X^n_\ell =x$. For $0\leq t\leq T$ and $x\in\Real^d$, let $b^n(t,x)\in\Real^d$ and $a^n(t,x)\in M_d^{\rm sym}(\Real)$ be defined by
\begin{equation*}
b^n(t,x):=n\Exp Y^n_{\left \lfloor nt \right \rfloor }(x),\qquad  a^n(t,x):=n\Exp Y^n_{\left \lfloor nt \right \rfloor }(x)Y^n_{\left \lfloor nT \right \rfloor }(x)^{\rm T}.\end{equation*}
We make the following assumptions.
\newline\noindent{\rm (1)} There are $C^2$ functions $a: [0,T]\times \Real^d\To M_d^{\rm sym}(\Real)$ and $b: [0,T]\times \Real^d\To M_d(\Real)$  such that for every $R<\infty$,
\begin{equation}
\sup_{0\leq t\leq T, \abs{x}\leq R}\norm{\int_0^t \of{a^n(s,x)-a(s,x)}ds }+\sup_{0\leq t\leq T, \abs{x}\leq R}\norm{\int_0^t \of{b^n(s,x)-b(s,x)}ds }\To 0.\label{e fia}
\end{equation}
\newline\noindent{\rm (2)} For every $R<\infty$ there is a constant $c_R<\infty$ such that
\begin{equation}
\norm{a^n(t,x)-a^n(t,y)}+\norm{b^n(t,x)-b^n(t,y)}\leq {c_{R}}\norm{x-y}, \label{e lip}
\end{equation}
for all $n\geq 1$, $t\in[0,T]$, $\norm{x}\leq R$ and $\norm{y}\leq R$. The same inequality holds for $a$ and $b$.
\newline\noindent{\rm (3)} For every $R<\infty$ there is a constant $d_R<\infty$ such that
\begin{equation}
\sup_{0\leq \ell\leq n, \norm{x}\leq R}\Exp[\norm{Y^n_\ell (x)}^3]\leq d_R n^{-3/2}. \label{e 3m}
\end{equation}
\newline\noindent{\rm (4)} The initial condition $X_0^n$ converges in distribution to $X_0$ with $\Exp\norm{X_0}^2<\infty.$
\newline Then $(X_{\left \lfloor nT \right \rfloor }^n, 0\leq t\leq T)$ converges  weakly in $D[0,T]$ to the unique solution of the SDE
\begin{equation}\label{SDESDE}
dX(t)=b(t,X(t))dt+g(t,X(t))d\cB(t),\qquad X(0)=X_0,
\end{equation}
where $\cB(t)$ is the $d$-dimensional Brownian motion and $g: [0,T]\times \Real^d\To M_d(\Real)$ is any  $C^2$ function with
$$g(t,x)g(t,x)^{\rm T}=a(t,x).$$
\end{prop}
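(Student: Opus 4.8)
The plan is to follow the classical route for diffusion approximations of Markov chains: extend the chains to continuous time by $\tilde X^n(t)=X^n_{\lfloor nt\rfloor}$, establish tightness of $(\tilde X^n)_n$ in $D[0,T]$, show that every subsequential weak limit solves the martingale problem for the time-dependent generator $L_tf(x)=b(t,x)\cdot\nabla f(x)+\tfrac12\,\mathrm{tr}\!\big(a(t,x)D^2f(x)\big)$, and then invoke well-posedness of that martingale problem. The latter holds because \eqref{SDESDE} has a unique strong solution: its coefficients $b$ and $g$ are locally Lipschitz (being $C^2$, resp. a $C^2$ square root of $a$), and together with the linear growth implied by assumption (2) and the boundedness of $a^n(\cdot,0),b^n(\cdot,0)$ (which follows from (1) and the continuity of $a,b$) this gives existence and uniqueness up to time $T$. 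Since every subsequential limit then has the law of the solution of \eqref{SDESDE} with initial law $X_0$ — the latter supplied by (4) — weak convergence of the full sequence follows.

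First I would localize. Stopping each chain at the first time it leaves the ball of radius $R$ replaces $a^n,b^n$ by globally bounded, globally Lipschitz modified coefficients; the price is controlled at the end because a discrete Gronwall argument for $\Exp\,\max_{\ell\le m}\norm{X^n_\ell}^2$ — using assumption (2), Doob's inequality for the martingale part, and the third-moment bound (3) — gives $\sup_n\Exp\,\sup_{0\le t\le T}\norm{\tilde X^n(t)}^2<\infty$, so the exit probability before time $T$ is $O(R^{-2})$ uniformly in $n$ and the limit $R\to\infty$ is harmless. For tightness of the stopped processes, the single-step bound $\Exp\norm{Y^n_\ell(x)}^3=O(n^{-3/2})$ of (3), applied to the martingale increments and summed over the at most $nT$ steps, yields the modulus-of-continuity control needed for tightness in $D[0,T]$; combined with the $L^2$ bound (tightness of the marginals) this gives relative compactness of $(\tilde X^n)_n$.

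For the martingale-problem identification, fix $f\in C^2$ of polynomial growth and note that $M^n_\ell:=f(X^n_\ell)-\sum_{j=0}^{\ell-1}\Exp\big[f(X^n_{j+1})-f(X^n_j)\,\big|\,X^n_j\big]$ is a martingale. A second-order Taylor expansion gives $\Exp\big[f(X^n_{j+1})-f(X^n_j)\mid X^n_j=x\big]=\tfrac1n\big(b^n(j/n,x)\cdot\nabla f(x)+\tfrac12\,\mathrm{tr}(a^n(j/n,x)D^2f(x))\big)+r^n_j$, where $|r^n_j|\le\norm{D^3f}_\infty\,\Exp\norm{Y^n_j(x)}^3=O(n^{-3/2})$ by (3); hence the total error over $\lfloor nt\rfloor$ steps is $O(n^{-1/2})\to0$. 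It remains to pass to the limit in the compensator $\tfrac1n\sum_{j<\lfloor nt\rfloor}\big(b^n\cdot\nabla f+\tfrac12\,\mathrm{tr}(a^nD^2f)\big)(j/n,X^n_j)$: along a weakly convergent subsequence, after the Skorokhod representation, one replaces $a^n,b^n$ by $a,b$ via the integrated convergence (1), replaces the Riemann sum by $\int_0^t(\cdot)(s,X(s))\,ds$ using the Lipschitz continuity (2) of $a,b$ and the continuity of the limit path, and applies dominated convergence with the uniform $L^2$ bound for integrability. This shows $X(\cdot)$ solves the martingale problem for $L_t$.

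The step I expect to be the main obstacle is exactly this compensator limit, because assumption (1) controls $a^n,b^n$ only after integration in $t$, not uniformly on compacts in $(t,x)$, so one cannot directly substitute the random point $X^n(s)$. The fix is to interchange the order of operations: use the Lipschitz bound (2) to move the evaluation point onto a deterministic mesh (or onto the limit path $X(s)$), incurring only a uniformly small $L^1$ error, and only then apply (1) with $x$ ranging over a fixed compact set. A secondary point deserving care is that uniqueness in law for \eqref{SDESDE} — and hence well-posedness of the martingale problem — is inherited by the localized problems once a locally Lipschitz $g$ with $gg^{\mathrm T}=a$ is fixed, which is routine; the $R\to\infty$ step then completes the proof.
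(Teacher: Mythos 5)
Your high-level strategy --- localize, show tightness, identify subsequential limits via the martingale problem for $L_tf = b\cdot\nabla f + \tfrac12\,\mathrm{tr}\!\left(a\,D^2f\right)$, and conclude by well-posedness --- matches the paper's, which packages this through Theorem 7.4.1 and Corollary 7.4.2 of Ethier--Kurtz. You also correctly flag the key obstacle: condition (1) controls $a^n,b^n$ only after time-integration, so one cannot directly substitute the random point $X^n(s)$. The gap is in your proposed fix. Rounding $X^n(s)$ to the nearest point $\hat X^n(s)$ of a deterministic spatial mesh and writing
$\int_0^t(b^n-b)(s,\hat X^n(s))\,ds = \sum_i\int_{[0,t]\cap A_i}(b^n-b)(s,m_i)\,ds$
with $A_i=\{s:\hat X^n(s)=m_i\}$ does not give what you need: for each fixed mesh point $m_i$, condition (1) bounds $\sup_t\bigl|\int_0^t(b^n-b)(s,m_i)\,ds\bigr|$, so by an Abel-summation argument each term above is bounded only up to a factor of the number of connected components of $A_i$. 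The total number of mesh-cell crossings of the chain is typically of order $\sqrt{n}$ (the increments are $O(n^{-1/2})$), so the resulting bound is $\sqrt{n}\cdot o(1)$ and need not vanish unless (1) were strengthened to $o(n^{-1/2})$. Evaluating at the limit path $X(s)$ is worse, since (1) gives no control at a continuously varying random point.

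The paper resolves this by discretizing in \emph{time} rather than in space: it sets $X^{n,L}_\ell=X^n_{K\lfloor\ell/K\rfloor}$ with $K=\lceil nT/L\rceil$, a piecewise-constant path holding each of at most $L$ random values over a deterministic time block. The decomposition then has at most $L$ terms, each supported on (unions of) fixed blocks, so condition (1) gives a bound $L\cdot o(1)$. The discretization error $\Exp\left\Vert X^n-X^{n,L}\right\Vert_\infty$ is controlled uniformly in $n$ by a Burkholder--Davis--Gundy estimate driven by the third-moment bound (3), combined with the Lipschitz bound (2), yielding an error $f(L)\to0$. Sending $n\to\infty$ first and then $L\to\infty$ closes the argument. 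Replacing your spatial mesh by this temporal block structure is the missing ingredient; the rest of your outline (tightness from (3), condition (e kuka3) via Markov's inequality, well-posedness of the limiting martingale problem from the local Lipschitz coefficients) is in line with the paper.
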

Note: one can always take $g(t,x):=\of{a(t,x)}^{1/2}$ but it can be useful to make other choices for which $a(t,x)$ has sparser structure than $\of{a(t,x)}^{1/2}$ and the resulting SDE has a simpler noise term.
\begin{proof} This is Proposition 23 in \cite{VV} with two small changes:
there the supremum is for all $x$ in (1) and (3) and the functions $a, b$ are assumed to have bounded derivatives instead of being Lipschitz in $x$. The proof is very similar, but we include it for the sake of completeness.

Let $\|\cdot \|_\infty$ denote supremum norm on $[0,T]$.
For a two-parameter function $f$ and $x\in \Real$ let  $\IO$
denote the integral $\IO_{f,x}(t)= \int_0^t f(s,x)\,ds$. We
recycle this notation for a function $X:[0,T]\to \RR$ to
write  $\IO_{f,X}(t)= \int_0^t f(s,X(s))\,ds$.

Because of our assumptions on $a$ and $b$ the well-posedness of
the martingale problem follows from Theorem 5.3.7 of
\cite{EthierKurtz}  (see especially the remarks following the
proof), and even pathwise uniqueness holds. This means that (\ref{SDESDE}) has a solution $X$ with initial condition $X_0$ and this solution is unique in distribution.

Let $\tau^n_r=\inf\{t: |X^n(t)|\ge r\}$. The derivation of the convergence $X^n\cd X$ is based on Theorem 7.4.1 of \cite{EthierKurtz}, as well as Corollary 7.4.2 and its proof. These show that
 if the limiting SDE has a unique
solution (i.e.~the martingale problem is well-posed as it is in our case) and we have $X_0^n\cd X_0$ with
\begin{eqnarray}\label{e kuka}
\|(\IO_{b^n,X^n}-\IO_{b,X^n}) \mathbf{1}(t\le \tau^n_r) \|_\infty &\lip& 0,\\
\|(\IO_{a^n,X^n}-\IO_{a,X^n}) \mathbf{1}(t\le \tau^n_r) \|_\infty &\lip& 0,\nonumber
\end{eqnarray}
and
\begin{equation}
\textup{for every $\eps, r>0$}\qquad    \sup_{|x|\le r,\ell}
n \pr(|Y^n_\ell(x)|\ge \eps)\longrightarrow 0\label{e kuka3},
\end{equation}
then $X^n\cd X$. The theorem there only deals with the case
of time-independent coefficients, but adding time as an
extra coordinate extends  the results to the general case.

 Condititon  (\ref{e
kuka3}) follows  from the uniform third absolute
moment bounds (\ref{e 3m}) and Markov's inequality. Thus we only need to show (\ref{e
kuka}) as well as the analogous statement for $a$, for which the
proof is identical. We do this by bounding the successive
uniform-norm distances between
$$
\IO_{b^n,X^n},\quad \IO_{b^n,X^{n,L}}, \quad
\IO_{b,X^{n,L}},\quad \IO_{b,X^n},
$$
where $X^{n,L}_\ell= X^n_{K\lfloor \ell/K\rfloor} $ with $K=\lceil
nT/L \rceil$, and $X^{n,L}(t)=X^{n,L}_{\lfloor nt \rfloor}$. In
words, we divide $[0,\lfloor nT \rfloor]$ into $L$ roughly equal
intervals and then set $X_\ell^{n,L}$ to be constant on each
interval and equal to the first value of $X^{n}_\ell$ occurring
there.

If a function $f$ takes countably many values $f_i$, then
for any $h$ we have
$$
\|\IO_{h,f}  \mathbf{1}(t\le \tau_r^n)\|_\infty\le \sum_{i} \|\IO_{h,f_i}\mathbf{1}(t\le \tau_r^n)\|_\infty\
$$
Since $X^{n,L}$ takes at most $L$ values, we have
$$
 \|(\IO_{b^n,X^{n,L}}-\IO_{b,X^{n,L}})\mathbf{1}(t\le \tau^n_r)\|_\infty=\|\IO_{b^n-b,X^{n,L}}\mathbf{1}(t\le \tau^n_r)\|_\infty\le
 L \sup_{|x|\le r} \|\IO_{b^n-b,x}\|_\infty=Lo(1)
$$
by \eqref{e fia} where $o(1)$ is uniform in $L$ and refers to
$n\to \infty$.
From (\ref{e lip}), the other terms
satisfy
\begin{eqnarray*}
\|(\IO_{b^n,X^{n,L}}-\IO_{b^n,X^{n}})\mathbf{1}(t\le \tau^n_r)\|_\infty&\le&
T\|(b^n(\cdot,X^{n,L}(\cdot))-b^n(\cdot,X^{n}(\cdot)))\mathbf{1}(\cdot \le \tau^n_r)\|_\infty \\
&\le& c_r T \|X^n-X^{n,L}\|_\infty
\end{eqnarray*}
The same  holds with $b$ replacing $b^n$. It now
suffices to show that
\begin{equation}
\ev \|X^{n,L}-X^n\|_\infty = \ev \sup_\ell
|X^{n,L}_\ell-X^n_\ell|\le f(L)\label{e_L1}
\end{equation}
uniformly in $n$ where $f(L)\to 0$ as $L\to\infty$. The
left-hand side of (\ref{e_L1}) is bounded by
$$
\ev \sup_\ell |X^n_\ell-\frac1n\sum_{k={\lfloor
\ell/K\rfloor K}}^{\ell-1} b_n(X^n_\ell)-X^{n,L}_\ell|+ \ev
\sup_\ell |\frac1n\sum_{k={\lfloor \ell/K\rfloor K}}^\ell
b(X_\ell)|
$$
and the second quantity is bounded by $T\sup_{\ell,x}
|b^n_\ell(x)|/L$. The first quantity can be written as $\ev
M^*$ where
$$
M^*=\max_{i=0,\ldots,L-1} M_i^*, \qquad
M_i^*=\max_{\ell=0,\ldots, K-1} |M_{i,\ell}|,\qquad
 M_{i,\ell}=
X_{iK+\ell}-X_{iK}- \frac1n\sum_{k=0}^{\ell-1}
b^n(X_{iK+k}).
$$
Note that for each $i$, $M_{i,\ell}$ is a martingale. For any martingale
with $M_0=0$ we have
$$
\ev \max_{k\le n}|M_k|^3 \le c \ev \Big|
\sum_{k\le n} \ev[(M_k-M_{k-1})^2|\mathcal F_{k-1}]
\Big| ^{3/2}
\le c n^{3/2} \max_{k\le
n} \ev[|M_k-M_{k-1}|^3|\mathcal F_{k-1}].
$$
The first step is the  Burkholder-Davis-Gundy inequality   (see
\cite{Kallenberg}, Theorem 26.12) and the second step follows from
Jensen's inequality. Therefore \eqref{e 3m} implies
$$
\ev [|M_i^*|^3| \mathcal F_{iL}] \le c (n/L)^{3/2}
n^{\frac{-3}{2}} =c L^{\frac{-3}{2}},
$$
which gives the desired conclusion
$$
(\ev M^*)^3\le \ev (M^*)^3\le \ev \sum_{i=0}^{L-1}
(M_i^*)^3 \le c L^{\frac{-1}{2}}.
$$
Letting first $n\to\infty $ and then $L\to\infty$ gives \eqref{e_L1} and
\eqref{e kuka}.

\end{proof}
\noindent {\bf Acknowledgments.} This research is supported
by the NSERC discovery grant program and the Canada
Research Chair program (Vir\'ag). Valk\'o is supported by
the NSF Grant DMS-09-05820. We thank Hermann Schulz-Baldes
for references, Michael Aizenmann and Rowan Killip for many
interesting comments and discussions.


\bigskip

\noindent \sc Eugene Kritchevski. Department of
Mathematics, University of Toronto, Toronto ON~~M5S 2E4,
Canada. {\tt eugene.kritchevski@utoronto.ca}.

\bigskip
\noindent Benedek Valk\'o. Department of Mathematics,
University of Wisconsin Madison, WI 53705, USA. {\tt
valko@math.wisc.edu.}

\bigskip
\noindent B\'alint Vir\'ag. Departments of Mathematics and
Statistics. University of Toronto. Toronto ON~~M5S 2E4,
Canada. {\tt balint@math.toronto.edu}.

\end{document}